\documentclass{article}%

\usepackage{amsmath,amssymb,amsfonts}%
\usepackage{amscd}%
\usepackage{theorem}%
\usepackage{enumerate}%
\usepackage[arrow,matrix,tips]{xy}%
\usepackage{hyperref}%

\theoremstyle{change}%
\setlength{\parindent}{0ex}%
\setlength{\parskip}{1ex}%
\sloppy%

\newtheorem{definition}{Definition:}[section]%
\newtheorem{theorem}[definition]{Theorem:}%
\newtheorem{proposition}[definition]{Proposition:}%
\newtheorem{lemma}[definition]{Lemma:}%
\newtheorem{lemma_txt}[definition]{Lemma}%
\newtheorem{corollary}[definition]{Corollary:}%
{\theorembodyfont{\rmfamily} \newtheorem{remark}[definition]{Remark:}}%
{\theorembodyfont{\rmfamily} \newtheorem{example}[definition]{Example:}}%

\newenvironment{proof}
  {{\bf Proof:}}
  {\qquad \hspace*{\fill} $\Box$}%

\newcommand{\tm}{\times}%
\newcommand{\cl}{\operatorname{cl}}%
\newcommand{\ep}{\varepsilon}%
\newcommand{\diam}{\operatorname{diam}}%
\newcommand{\id}{\operatorname{id}}%
\newcommand{\Lip}{\operatorname{Lip}}%
\newcommand{\Mis}{\operatorname{M}}%

\newcommand{\N}{\mathbb{N}}%
\newcommand{\R}{\mathbb{R}}%
\newcommand{\Z}{\mathbb{Z}}%

\newcommand{\AC}{\mathcal{A}}%
\newcommand{\BC}{\mathcal{B}}%
\newcommand{\CC}{\mathcal{C}}%
\newcommand{\DC}{\mathcal{D}}%
\newcommand{\EC}{\mathcal{E}}%
\newcommand{\LC}{\mathcal{L}}%
\newcommand{\NC}{\mathcal{N}}%
\newcommand{\PC}{\mathcal{P}}%
\newcommand{\QC}{\mathcal{Q}}%
\newcommand{\UC}{\mathcal{U}}%
\newcommand{\VC}{\mathcal{V}}%

\newcommand{\rme}{\mathrm{e}}%
\newcommand{\rmd}{\mathrm{d}}%
\newcommand{\rmS}{\mathrm{S}}%
\newcommand{\rmD}{\mathrm{D}}%

\newcommand{\tp}{\operatorname{top}}%
\newcommand{\sep}{\operatorname{sep}}%
\newcommand{\spn}{\operatorname{span}}%

\begin{document}

\title{Expanding and expansive time-dependent dynamics}%
\author{Christoph Kawan\footnote{Universit\"{a}t Passau, Fakult\"{a}t f\"{u}r Informatik und Mathematik, Innstra{\ss}e 33, 94032 Passau (Germany); e-mail: christoph.kawan@uni-passau.de}}%
\date{}%
\maketitle%

\begin{abstract}
In this paper, time-dependent dynamical systems given by sequences of maps are studied. For systems built from expanding $\CC^2$-maps on a compact Riemannian manifold $M$ with uniform bounds on expansion factors and derivatives, we provide formulas for the metric and topological entropy. If we only assume that the maps are $\CC^1$, but act in the same way on the fundamental group of $M$, we can show the existence of an equi-conjugacy to an autonomous system, implying a full variational principle for the entropy. Finally, we introduce the notion of strong uniform expansivity that generalizes the classical notion of positive expansivity, and we prove time-dependent analogues of some well-known results. In particular, we generalize Reddy's result which states that a positively expansive system  locally expands distances in an equivalent metric.%
\end{abstract}

{\small {\bf Keywords:} Nonautonomous dynamical systems, topological entropy, metric entropy, pressure, variational principle, expansivity}%

\section{Introduction}%

Uniformly expanding maps are the simplest non-trivial examples of discrete-time dynamical systems within the theory of finite-dimensional differentiable systems. From today's perspective their analysis can be seen as the starting point of a long and fruitful thread of research in differentiable dynamics, with different stages of generalization, from expanding to uniformly hyperbolic, to non-uniformly or partially hyperbolic. Recently, there have been major efforts in establishing a general theory of systems with time-dependent dynamical laws, often called nonautonomous, sequential or non-stationary dynamical systems, see for instance \cite{BVa,Kaw,KRa,KSn,KMS,OSY}. Instead by the iteration of one map, a discrete-time nonautonomous system is defined by a sequence of maps which are composed in the given order, i.e., at each time instant the dynamical law can be different. Though the range of phenomena to be observed in such systems is certainly much broader than in the autonomous case, the study of smooth uniformly expanding systems might as well be a good starting point here.%

The papers \cite{LYo} by Lasota and Yorke and \cite{OSY} by Ott, Stenlund and Young laid the foundations for the study of statistical properties of systems defined by the composition of $\CC^2$-expanding maps $f_n:M\rightarrow M$ on a compact Riemannian manifold $M$. Here the main focus is on (exponential) loss of memory, the time-dependent analogue of decay of correlations. In particular, in \cite{OSY} the authors prove that two positive initial densities with respect to the Riemannian volume measure converge to each other in the $L^1$-sense at an exponential rate under the evolution of the nonautonomous system, although none of them necessarily tends to a limit. For this strong result some further restrictions on the sequence of expanding maps is necessary, namely uniform bounds on the expansion factors and the first and second derivatives. Without such uniform bounds, the convergence may not be exponential on the whole positive time axis.%

Other classical concepts that have been extended to nonautonomous systems are those of topological and measure-theoretic entropy, cf.~\cite{Can,Kaw,KSn,KMS}. In particular, in \cite{Kaw} one part of the variational principle for entropy was established under quite general conditions. In the first part of this paper, we extend the notions of topological and measure-theoretic pressure to nonautonomous systems and prove that the second is always bounded by the first, generalizing the corresponding inequality for the entropies. We use the distortion lemma from \cite{OSY} to prove a Bowen-Ruelle-type volume lemma first, which together with the inequality of pressures then yields a formula for the metric entropy of an expanding nonautonomous system with respect to smooth initial measures. Subsequently, we also provide a formula for the topological entropy, using again the volume lemma.%

For a nonautonomous system built from $\CC^1$-expanding maps $f_n:M\rightarrow M$ that all act in the same way on the fundamental group of $M$, we generalize a classical result of Shub \cite{Shu}. We prove the existence of an equi-conjugacy to an autonomous system $f:M\rightarrow M$, i.e., of a sequence $(\pi_n)$ of homeomorphisms such that $\pi_{n+1} \circ f_n \equiv f \circ \pi_n$, where both $\{\pi_n\}$ and $\{\pi_n^{-1}\}$ are equicontinuous families. Using that equi-conjugacies preserve the topological as well as the metric entropy, we can conclude a full variational principle for such systems.%

In the last part of the paper, we introduce for a topological nonautonomous system the notion of strong uniform expansivity, which generalizes the classical notion of positive expansivity. In particular, the $\CC^2$-expanding systems considered in the first part of the paper satisfy this property. Conversely, we show that strongly uniformly expansive systems admit uniformly equivalent metrics in which distances are expanded locally, and uniformly with respect to the initial time. This generalizes a classical result of Reddy \cite{Red}.%

The paper is organized as follows. In Section \ref{sec_prelim}, we give an overview of the main concepts and introduce notation. Section \ref{sec_pressure} introduces the notions of topological and measure-theoretic pressure and contains the proof of the variational inequality. In Section \ref{sec_indep}, we show that the metric entropy of an expanding nonautonomous system does not depend on the initial measure as long as it has a positive Lipschitz density with respect to the Riemannian volume. Subsequently, in Section \ref{sec_entforms} we provide a formula for the metric entropy as well as for the topological entropy. Section \ref{sec_equiconj} contains the proof of the time-dependent conjugacy result for $\CC^1$-expanding systems. Finally, in Section \ref{sec_sue} the notion of strong uniform expansivity is defined and several properties are shown.%

\section{Preliminaries}\label{sec_prelim}%

We write $\N$ for the set of positive integers, $\N_0 = \{0\}\cup\N$, and $\R$ for the set of real numbers. For a finite set $A$, $\# A$ denotes the number of elements in $A$. We write $B_{\ep}(x)$ or $B_{\ep}(x;d)$ for the open $\ep$-ball around $x$ in a metric space $(X,d)$. A \emph{nonautonomous dynamical system}, or an \emph{NDS} for short, is a pair $(X_{0,\infty},f_{0,\infty})$, where $X_{0,\infty} = (X_n)_{n=0}^{\infty}$ is a sequence of sets and $f_{0,\infty} = (f_n)_{n=0}^{\infty}$ is a sequence of maps $f_n:X_n \rightarrow X_{n+1}$. If all the sets $X_n$ are compact metric spaces with associated metrics $d_n$, and all the $f_n$ are continuous, we speak of a \emph{topological NDS}. If the sets $X_n$ are probability spaces with associated $\sigma$-algebras $\AC_n$ and probability measures $\mu_n$, such that the $f_n$ are measurable and satisfy $f_n\mu_n \equiv \mu_{n+1}$ (where $f_n$ here stands for the induced push-forward operator on measures), we speak of a \emph{measure-theoretic} or \emph{metric NDS}. Then we call $\mu_{0,\infty} := (\mu_n)_{n=0}^{\infty}$ an \emph{invariant measure sequence} or an \emph{IMS} for $(X_{0,\infty},f_{0,\infty})$.%

The time evolution of the system is defined by composing the maps $f_n$ in the obvious way. In general, we define%
\begin{equation*}
  f_k^n := f_{k+n-1} \circ \cdots \circ f_{k+1} \circ f_k \quad \mbox{for } k \in \N_0,\ n\in\N,\qquad f_k^0 := \id_{X_k}.%
\end{equation*}
We also put $f_k^{-n} := (f_k^n)^{-1}$, which is only applied to sets. We write $(X_{i,\infty},f_{i,\infty})$ for the pair of shifted sequences $(X_i,X_{i+1},\ldots)$ and $(f_i,f_{i+1},\ldots)$, respectively. Moreover, we abbreviate $(X_{0,\infty},f_{0,\infty})$ by $(X_{\infty},f_{\infty})$, and we use analogous notation for other sequences of objects related to an NDS. If such a sequence is constant, e.g., $X_n \equiv X$, we simply write $X$ instead of $X_{\infty}$.%

We recall the definition of metric entropy for a metric NDS $(X_{\infty},f_{\infty})$. If $\PC_{\infty} = (\PC_n)_{n\geq0}$ is a sequence of finite measurable partitions for the spaces $X_n$,%
\begin{equation*}
  h(f_{\infty};\PC_{\infty};\mu_{\infty}) := \limsup_{n\rightarrow\infty}\frac{1}{n}H_{\mu_0}\left(\bigvee_{i=0}^{n-1}f_0^{-i}\PC_i\right),%
\end{equation*}
where $H_{\mu_0}(\cdot) = - \sum_{P \in (\cdot)}\mu_0(P)\log \mu_0(P)$ denotes the usual entropy of a partition, is called the \emph{entropy of $f_{\infty}$ w.r.t.~$\PC_{\infty}$}. If the sequence $\mu_{\infty}$ is clear from the context, we omit this argument. A family $\EC$ of sequences of partitions is called an \emph{admissible class} if (i) for every $\PC_{\infty}\in\EC$ there is a uniform bound on $\#\PC_n$, (ii) if $\PC_{\infty}\in\EC$ and $\QC_{\infty}$ is coarser than $\PC_{\infty}$ (componentwise), then $\QC_{\infty}\in\EC$, and (iii) if $\PC_{\infty}\in\EC$ and $k\in\N$, then $\PC^{\langle k\rangle}_{\infty}$, defined by%
\begin{equation*}
  \PC^{\langle k \rangle}_n := \bigvee_{i=0}^{k-1}f_n^{-i}\PC_{i+n},%
\end{equation*}
is also in $\EC$. For an admissible class $\EC$, the \emph{metric entropy of $f_{\infty}$ w.r.t.~$\EC$} is defined by%
\begin{equation*}
  h_{\EC}(f_{\infty}) := \sup_{\PC_{\infty}\in\EC}h(f_{\infty};\PC_{\infty}).%
\end{equation*}

For a topological NDS $(X_{\infty},f_{\infty})$ we define the \emph{topological entropy of $f_{\infty}$ w.r.t.~a sequence $\UC_{\infty} = (\UC_n)_{n\geq0}$} of open covers for $X_{\infty}$ by%
\begin{equation*}
  h_{\tp}(f_{\infty};\UC_{\infty}) := \limsup_{n\rightarrow\infty}\frac{1}{n}\log\NC\left(\bigvee_{i=0}^{n-1}f_0^{-i}\UC_i\right),%
\end{equation*}
where $\NC(\cdot)$ denotes the minimal cardinality of a subcover. We denote the family of all sequences of open covers for the spaces $X_n$ with Lebesgue numbers bounded away from zero by $\LC(X_{\infty})$. Then the \emph{topological entropy of $f_{\infty}$} is%
\begin{equation*}
  h_{\tp}(f_{\infty}) := \sup_{\UC_{\infty}\in\LC(X_{\infty})}h_{\tp}(f_{\infty};\UC_{\infty}).%
\end{equation*}
Equivalent definitions in terms of $(n,\ep)$-spanning or $(n,\ep)$-separated subsets of $X_0$ can be given. The \emph{Bowen-metrics} on $X_i$ are given by%
\begin{equation*}
  d_{i,n}(x,y) := \max_{0\leq j \leq n}d_{i+j}(f_i^j(x),f_i^j(y)),\quad i,n \geq 0.%
\end{equation*}
An open ball of radius $\ep$ in the metric $d_{i,n}$ is denoted by $B_i^n(x,\ep)$ and called a \emph{Bowen-ball of order $n$}. We say that a set $F\subset X_0$ is \emph{$(n,\ep;f_{\infty})$-spanning} (or \emph{$(n,\ep)$-spanning}) if for every $x\in X_0$ there is $y\in F$ with $d_{0,n}(x,y) < \ep$. A set $E\subset X_0$ is \emph{$(n,\ep;f_{\infty})$-separated} (or \emph{$(n,\ep)$-separated}) if $d_{0,n}(x,y) \geq \ep$ holds for all $x\neq y$ in $E$. Then $r_{\spn}(n,\ep;f_{\infty})$ denotes the minimal cardinality of an $(n,\ep;f_{\infty})$-spanning set and $r_{\sep}(n,\ep;f_{\infty})$ the maximal cardinality of an $(n,\ep)$-separated set.%

Two topological NDSs $(X_{\infty},f_{\infty})$ and $(Y_{\infty},g_{\infty})$ are \emph{equi-conjugate} if there exists a sequence $\pi_{\infty} = (\pi_n)_{n\geq0}$ of homeomorphisms $\pi_n:X_n\rightarrow Y_n$ such that both $(\pi_n)$ and $(\pi_n^{-1})$ are uniformly equicontinuous and $\pi_{n+1} \circ f_n \equiv g_n \circ \pi_n$. In this case, $h_{\tp}(f_{\infty}) = h_{\tp}(g_{\infty})$ and such $\pi_{\infty}$ is called an equi-conjugacy (cf.~\cite[Thm.~B]{KSn}).%

For an equicontinuous NDS $(X_{\infty},f_{\infty})$ with an IMS $\mu_{\infty}$ of Borel probability measures, a special admissible class $\EC_{\Mis}$ is defined as follows: $\PC_{\infty}$, $\PC_n = \{P_{n,1},\ldots,P_{n,k_n}\}$, is in $\EC_{\Mis}$ iff for every $\ep>0$ there are $\delta>0$ and compact sets $K_{n,i}\subset P_{n,i}$ such that (i) $\mu_n(P_{n,i}\backslash K_{n,i}) \leq \ep$ and (ii) $\min_{1 \leq i<j \leq k_n}D(K_{n,i},K_{n,j}) \geq \delta$ for all $n\geq0$, where $D(K_{n,i},K_{n,j}) = \min_{(x,y)\in K_{n,i}\tm K_{n,j}}d_n(x,y)$. In \cite[Thm.~28]{Kaw} we proved the inequality%
\begin{equation*}
  h_{\EC_M}(f_{\infty}) \leq h_{\tp}(f_{\infty}).%
\end{equation*}

For each NDS $(X_{\infty},f_{\infty})$ and $k\geq1$, we define the \emph{$k$-th power system} $(X^{[k]}_{\infty},f^{[k]}_{\infty})$,%
\begin{equation*}
  X^{[k]}_n := X_{kn},\quad f^{[k]}_n := f_{kn}^k.%
\end{equation*}
For the corresponding metric and topological entropies, power rules hold (see \cite[Prop.~5 and Prop.~25]{Kaw}), e.g., if $f_{\infty}$ is equicontinuous, then%
\begin{equation*}
  h_{\tp}(f^{[k]}_{\infty}) = k \cdot h_{\tp}(f_{\infty}).%
\end{equation*}

Next, we introduce the class of nonautonomous systems to be studied in the present paper. Let $M$ be a connected and compact Riemannian manifold. By $d(\cdot,\cdot)$ we denote the Riemannian distance and by $m$ the Riemannian volume measure on $M$. For simplicity, we will assume $m(M)=1$, so $m$ is a probability measure. For any $\lambda>1$ and $\Gamma>\lambda$ consider the set%
\begin{equation*}
  \EC(\lambda,\Gamma) := \left\{f \in \CC^2(M,M)\ :\ f \mbox{ is expanding with factor } \lambda,\ \|f\|_{\CC^2} \leq \Gamma \right\},%
\end{equation*}
where ``\emph{expanding with factor $\lambda$}'' means that $|\rmD f(v)|\geq\lambda|v|$ holds for all $v\in TM$. We will consider an NDS $f_{\infty} = (f_n)_{n=0}^{\infty}$ on $M$ with $f_n \in \EC(\lambda,\Gamma)$ for fixed $\lambda,\Gamma$. It is clear that such a system is equicontinuous. We define%
\begin{equation*}
  \DC := \left\{\varphi:M\rightarrow\R\ :\ \varphi>0,\ \varphi \mbox{ is Lipschitz},\ \int \varphi \rmd m = 1 \right\}%
\end{equation*}
and for every $L>0$,%
\begin{equation*}
  \DC_L := \left\{\varphi \in \DC\ :\ \left|\frac{\varphi(x)}{\varphi(y)}-1\right| \leq Ld(x,y) \mbox{ if } d(x,y)<\ep\right\},%
\end{equation*}
where $\ep>0$ is a fixed number (depending on $\lambda$ and $\Gamma$), cf.~\cite[Sec.~2.2]{OSY}. Note that%
\begin{equation*}
  \DC = \bigcup_{L>0}\DC_L,%
\end{equation*}
since for every $\varphi\in\DC$ we have%
\begin{equation*}
  \left|\frac{\varphi(x)}{\varphi(y)}-1\right| = \frac{1}{\varphi(y)}\left|\varphi(x) - \varphi(y)\right| \leq \frac{\Lip(\varphi)}{\min \varphi}d(x,y).%
\end{equation*}
For any expanding $\CC^2$-map $f:M\rightarrow M$ we write%
\begin{equation*}
  \PC_f(\varphi)(x) = \sum_{y\in f^{-1}(x)}\frac{\varphi(y)}{|\det \rmD f(y)|},\qquad \PC_f(\varphi):M\rightarrow\R,%
\end{equation*}
for the Perron-Frobenius operator associated with $f$ acting on densities $\varphi\in\DC$. Note that this makes sense, since expanding maps are covering maps, and hence the sets $f^{-1}(x)$ are finite, all having the same number of elements. We have the following key result (cf.~\cite[Prop.~2.3]{OSY}).%

\begin{proposition}\label{prop_osy}
There exists $L^*>0$ for which the following holds. For any $L>0$ there is $\tau(L)\geq 1$ such that for all $\varphi\in\DC_L$ and $f_k\in\EC(\lambda,\Gamma)$ ($k\in\N_0$), $\PC_{f_0^n}(\varphi)\in\DC_{L^*}$ for all $n\geq\tau(L)$.%
\end{proposition}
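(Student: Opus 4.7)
The goal is to show that the sequential Perron--Frobenius operator eventually pushes any $\varphi \in \DC_L$ into a fixed ball $\DC_{L^*}$, with $L^*$ depending only on $\lambda$ and $\Gamma$. My plan is to estimate $|\PC_{f_0^n}(\varphi)(x)/\PC_{f_0^n}(\varphi)(x') - 1|$ by bounding each term in the sum defining $\PC_{f_0^n}(\varphi)$ via the factorization
$$\frac{\varphi(y)/|\det \rmD f_0^n(y)|}{\varphi(y')/|\det \rmD f_0^n(y')|} = \frac{\varphi(y)}{\varphi(y')} \cdot \frac{|\det \rmD f_0^n(y')|}{|\det \rmD f_0^n(y)|},$$
where each preimage $y \in (f_0^n)^{-1}(x)$ is paired with the unique $y' \in (f_0^n)^{-1}(x')$ lying in a small neighborhood of $y$, and then passing to the weighted average.

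First I would fix $\ep > 0$ small enough (depending only on $\lambda, \Gamma$) that each $f_k$ admits local inverse branches on balls of radius $\ep$ which contract distances by at least $\lambda^{-1}$. Iterating $n$ such branches along the orbit, every preimage $y$ of $x$ is paired with a unique $y'$ of $x'$ satisfying $d(y,y') \leq \lambda^{-n} d(x,x')$, provided $d(x,x') < \ep$. The first ratio is then directly controlled by $\varphi \in \DC_L$:
$$\left|\frac{\varphi(y)}{\varphi(y')} - 1\right| \leq L \lambda^{-n} d(x,x').$$

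For the second ratio I would invoke the distortion lemma of \cite{OSY} rather than reproving it. Its proof rests on the telescoping identity
$$\log \frac{|\det \rmD f_0^n(y')|}{|\det \rmD f_0^n(y)|} = \sum_{k=0}^{n-1} \Bigl[\log|\det \rmD f_k|(f_0^k(y')) - \log|\det \rmD f_k|(f_0^k(y))\Bigr],$$
where the bound $\|f_k\|_{\CC^2} \leq \Gamma$ together with the uniform lower bound $|\det \rmD f_k| \geq \lambda^{\dim M}$ yields a Lipschitz constant for $\log|\det \rmD f_k|$ that is independent of $k$, and the exponential contraction $d(f_0^k(y), f_0^k(y')) \leq \lambda^{-(n-k)} d(x,x')$ toward the endpoint sums to a convergent geometric series. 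The outcome is a bound of the form $\bigl||\det \rmD f_0^n(y')|/|\det \rmD f_0^n(y)| - 1\bigr| \leq C_1 d(x,x')$ with $C_1 = C_1(\lambda,\Gamma)$.

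Multiplying the two estimates, each individual ratio differs from $1$ by at most $(L\lambda^{-n} + C_2) d(x,x')$ for a constant $C_2 = C_2(\lambda, \Gamma)$. Since $\PC_{f_0^n}(\varphi)(x)$ and $\PC_{f_0^n}(\varphi)(x')$ are sums of matched, positive terms, the same bound transfers to their ratio by the elementary fact that a convex combination of reals in $[1-\eta,1+\eta]$ still lies in $[1-\eta,1+\eta]$. Setting $L^* := 2 C_2$ and $\tau(L) := \lceil \log_\lambda(L/C_2) \rceil$ forces $L\lambda^{-n} \leq C_2$ for $n \geq \tau(L)$, delivering the claim. The one real technical point is the distortion estimate, where uniformity in $k$ of the Lipschitz constant of $\log|\det \rmD f_k|$ is exactly what the bound $\Gamma$ provides, and summability of the orbit contributions is exactly what uniform expansion by $\lambda$ provides; the remaining steps (choice of $\ep$, identification of paired preimages, passage from termwise to averaged ratios) are routine.
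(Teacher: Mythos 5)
Your argument is sound, but note that the paper offers no proof of this proposition at all: it is imported verbatim from Ott--Stenlund--Young \cite[Prop.~2.3]{OSY}, so the only meaningful comparison is with the proof in that reference, and yours follows essentially the same standard route (pairing of inverse branches of $f_0^n$ over nearby points, the $\lambda^{-n}$-contraction controlling the $\varphi$-ratio, bounded distortion controlling the Jacobian ratio, and passage from termwise ratios to the weighted average). Two small points are worth tightening. First, membership in $\DC_{L^*}$ also requires $\PC_{f_0^n}(\varphi)\in\DC$, i.e.\ positivity (clear), the normalization $\int \PC_{f_0^n}(\varphi)\,\rmd m=\int\varphi\,\rmd m=1$ (change of variables over the inverse branches), and global Lipschitz continuity; the last follows from your ratio estimate once one has a uniform upper bound on $\PC_{f_0^n}(\varphi)$, which itself comes from the ratio estimate combined with the normalization and $m(M)=1$ via a chaining argument over $\ep$-chains in the connected manifold $M$. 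Second, the product of the two ratio estimates carries a cross term bounded by $L\lambda^{-n}C_1\ep\, d(x,x')$, which is harmless for $n\geq\tau(L)$ but should be absorbed into the constants before declaring $L^*$; likewise $\tau(L)$ should be taken as $\max\{1,\lceil\log_\lambda(L/C_2)\rceil\}$ so that $\tau(L)\geq 1$ for small $L$.
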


\section{Metric and topological pressure}\label{sec_pressure}%

The topological pressure and its measure-theoretic counterpart for autonomous dynamical systems were introduced by Ruelle \cite{Ru1} and related (in full generality) by Walters \cite{Wal} via a variational principle. For nonautonomous systems, a notion of topological pressure was introduced by Huang, Wen and Zeng \cite{Hea}. We generalize this notion and also define a measure-theoretic counterpart.%

Given a topological NDS $(X_{\infty},f_{\infty})$ together with an IMS $\mu_{\infty} = (\mu_n)_{n\geq0}$ of Borel probability measures and a (uniformly) equicontinuous and uniformly bounded sequence $\varphi_{\infty}$ of functions $\varphi_n:X_n \rightarrow \R$, we define the metric pressure by%
\begin{eqnarray*}
  P_{\mu_{\infty}}(f_{\infty};\varphi_{\infty}) &:=& h_{\EC_{\Mis}}(f_{\infty};\mu_{\infty}) + \liminf_{n\rightarrow\infty}\int_{X_0}\frac{1}{n}\sum_{i=0}^{n-1} \varphi_i\circ f_0^i\rmd\mu_0\\
                                                &=& h_{\EC_{\Mis}}(f_{\infty};\mu_{\infty}) + \liminf_{n\rightarrow\infty}\frac{1}{n}\sum_{i=0}^{n-1}\int_{X_i} \varphi_i\rmd\mu_i.%
\end{eqnarray*}
In the following, we use the abbreviation $S_n\varphi_{\infty}(x) := \sum_{i=0}^{n-1}\varphi_i(f_0^i(x))$. We define the topological pressure by%
\begin{eqnarray*}
  S(n,\ep;\varphi_{\infty}) &:=& \sup\left\{ \sum_{x\in E} \rme^{S_n\varphi_{\infty}(x)}\ :\ E \subset X_0 \mbox{ is } (n,\ep)\mbox{-separated}\right\},\\
  R(n,\ep;\varphi_{\infty}) &:=& \inf\left\{ \sum_{x\in F} \rme^{S_n\varphi_{\infty}(x)}\ :\ F \subset X_0 \mbox{ is } (n,\ep)\mbox{-spanning}\right\},\\
  P_{\tp}(f_{\infty};\varphi_{\infty}) &:=& \lim_{\ep\searrow0}\limsup_{n\rightarrow\infty}\frac{1}{n}\log S(n,\ep;\varphi_{\infty})\\
                                        &=& \lim_{\ep\searrow0}\limsup_{n\rightarrow\infty}\frac{1}{n}\log R(n,\ep;\varphi_{\infty}).%
\end{eqnarray*}
Using the compactness of $X_0$, it is not hard to show that the supremum in the definition of $S(n,\ep;\varphi_{\infty})$ is in fact a maximum.%

\begin{remark}
The difference between our definition of $P_{\tp}$ and the one given in \cite{Hea} is that we consider a sequence of functions instead of a single function.%
\end{remark}

The proof of the following lemma is an adaptation of Walters \cite[Thm.~1.1]{Wal}.%

\begin{lemma}
The definition of $P_{\tp}(f_{\infty};\varphi_{\infty})$ is correct, i.e., the limits for $\ep\searrow0$ exist and the two expressions using $S(n,\ep;\varphi_{\infty})$ and $R(n,\ep;\varphi_{\infty})$, resp., coincide.%
\end{lemma}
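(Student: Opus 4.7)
The plan is to adapt Walters' classical argument \cite[Thm.~1.1]{Wal}, with the standing hypotheses of uniform equicontinuity and uniform boundedness of $\varphi_{\infty}$ playing the role that continuity and boundedness on a single compact space play in the autonomous setting. First I would observe that both quantities
\[
p^S(\ep) := \limsup_{n\rightarrow\infty}\frac{1}{n}\log S(n,\ep;\varphi_{\infty}), \qquad p^R(\ep) := \limsup_{n\rightarrow\infty}\frac{1}{n}\log R(n,\ep;\varphi_{\infty})
\]
are non-increasing in $\ep$, since for $\ep_1 < \ep_2$ every $(n,\ep_2)$-separated set is $(n,\ep_1)$-separated and every $(n,\ep_1)$-spanning set is $(n,\ep_2)$-spanning. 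Hence $\lim_{\ep\searrow 0} p^S(\ep)$ and $\lim_{\ep\searrow 0} p^R(\ep)$ both exist in $(-\infty,+\infty]$; uniform boundedness of $\varphi_{\infty}$ together with compactness of $X_0$ (which yields finite $(n,\ep)$-spanning sets) makes $S(n,\ep;\varphi_\infty)$ and $R(n,\ep;\varphi_\infty)$ finite.

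Next, the easy inequality $R(n,\ep;\varphi_{\infty}) \leq S(n,\ep;\varphi_{\infty})$ follows from the fact that any \emph{maximal} $(n,\ep)$-separated set is automatically $(n,\ep)$-spanning. For the reverse direction, given an $(n,2\ep)$-separated set $E$ and an $(n,\ep)$-spanning set $F$, I would choose a map $\phi:E\rightarrow F$ with $d_{0,n}(x,\phi(x)) < \ep$; this $\phi$ is injective by the triangle inequality since points of $E$ lie at $d_{0,n}$-distance at least $2\ep$ apart. Uniform equicontinuity of $\varphi_{\infty}$ yields a modulus of continuity $\gamma:(0,\infty)\rightarrow[0,\infty)$ with $\gamma(\ep) \rightarrow 0$ as $\ep\searrow 0$ such that $d_j(a,b) < \ep$ implies $|\varphi_j(a) - \varphi_j(b)| \leq \gamma(\ep)$ uniformly in $j\geq 0$. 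Since $d_{0,n}(x,\phi(x)) < \ep$ means $d_j(f_0^j(x),f_0^j(\phi(x))) < \ep$ for $0\leq j \leq n$, summing gives $|S_n\varphi_{\infty}(x) - S_n\varphi_{\infty}(\phi(x))| \leq n\gamma(\ep)$. Therefore
\[
\sum_{x\in E}\rme^{S_n\varphi_{\infty}(x)} \leq \rme^{n\gamma(\ep)} \sum_{x\in E}\rme^{S_n\varphi_{\infty}(\phi(x))} \leq \rme^{n\gamma(\ep)} \sum_{y\in F}\rme^{S_n\varphi_{\infty}(y)},
\]
and taking the supremum over $E$ and infimum over $F$ yields $S(n,2\ep;\varphi_{\infty}) \leq \rme^{n\gamma(\ep)} R(n,\ep;\varphi_{\infty})$.

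Finally, applying $\tfrac{1}{n}\log$ and $\limsup_{n\rightarrow\infty}$ to this last estimate yields $p^S(2\ep) \leq \gamma(\ep) + p^R(\ep) \leq \gamma(\ep) + p^S(\ep)$. Letting $\ep \searrow 0$ and using $\gamma(\ep)\rightarrow 0$ sandwiches the two monotone limits together, so $\lim_{\ep\searrow 0} p^S(\ep) = \lim_{\ep\searrow 0} p^R(\ep)$, proving that both definitions of $P_{\tp}(f_{\infty};\varphi_{\infty})$ exist and coincide. The main obstacle I anticipate is precisely the production of a modulus of continuity $\gamma$ that is uniform in the index $j$; this is the one place where the autonomous proof genuinely differs from the nonautonomous one, and it is exactly what the \emph{uniform} equicontinuity assumption on $\varphi_{\infty}$ is tailored to supply. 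The remaining steps are routine bookkeeping.
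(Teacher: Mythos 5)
Your proposal is correct and follows essentially the same route as the paper's proof: monotonicity in $\ep$ for existence of the limits, the observation that a maximal $(n,\ep)$-separated set is $(n,\ep)$-spanning for one inequality, and an injective map from a separated set into a spanning set combined with uniform equicontinuity of $\varphi_{\infty}$ for the other. The only cosmetic differences are that you phrase uniform equicontinuity via a modulus $\gamma(\ep)$ and compare scales $2\ep$ versus $\ep$ (the paper fixes $\delta$ first and compares $\ep$ versus $\ep/2$), and you take the supremum/infimum over arbitrary $E$ and $F$ rather than working with near-optimal ones.
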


\begin{proof}
The existence of the limits follows, because for $\ep_1<\ep_2$ every $(n,\ep_2)$-separated set is also $(n,\ep_1)$-separated, and hence $S(n,\ep_1;\varphi_{\infty}) \geq S(n,\ep_2;\varphi_{\infty})$. Similarly, every $(n,\ep_1)$-spanning set is also $(n,\ep_2)$-spanning, and hence $R(n,\ep_1;\varphi_{\infty}) \geq R(n,\ep_2;\varphi_{\infty})$. Now assume that $E\subset X_0$ is an $(n,\ep)$-separated set such that the sum $\sum_{x\in E}\rme^{S_n\varphi_{\infty}(x)}$ is maximal. Assume to the contrary the existence of $y \in X_0$ with $d_{0,n}(x,y) > \ep$ for all $x\in E$. Then also $E' := E \cup \{y\}$ is $(n,\ep)$-separated and the sum $\sum_{x\in E'}\rme^{S_n\varphi_{\infty}(x)}$ is strictly greater than $\sum_{x\in E}\rme^{S_n\varphi_{\infty}(x)}$, a contradiction. Hence, $E$ is a maximal $(n,\ep)$-separated set and thus also $(n,\ep)$-spanning, implying%
\begin{equation*}
  \lim_{\ep\searrow0}\limsup_{n\rightarrow\infty}\frac{1}{n}\log R(n,\ep;\varphi_{\infty}) \leq \lim_{\ep\searrow0}\limsup_{n\rightarrow\infty}\frac{1}{n}\log S(n,\ep;\varphi_{\infty}).%
\end{equation*}
To show the converse inequality, let $\delta>0$ and choose $\ep>0$ so that for all $n\geq1$,%
\begin{equation*}
  d_n(x,y) < \frac{\ep}{2} \qquad\Rightarrow\qquad |\varphi_n(x) - \varphi_n(y)| < \delta.%
\end{equation*}
Let $n\in\N$ and $\lambda>0$. Choose an $(n,\ep)$-separated set $E\subset X_0$ with%
\begin{equation*}
  S(n,\ep;\varphi_{\infty}) - \lambda < \sum_{x\in E}\rme^{S_n\varphi_{\infty}(x)},%
\end{equation*}
and choose an $(n,\ep/2)$-spanning set $F\subset X_0$ with%
\begin{equation*}
  \sum_{x\in F}\rme^{S_n\varphi_{\infty}(x)} - \lambda < R\left(n,\frac{\ep}{2};\varphi_{\infty}\right).%
\end{equation*}
Define a map $\sigma:E\rightarrow F$ by choosing for each $x\in E$ a point $\sigma(x)\in F$ with $d_{0,n}(x,\sigma(x))<\ep/2$. This map is injective, since $E$ is $(n,\ep)$-separated. Moreover,%
\begin{eqnarray*}
  \frac{\sum_{y\in F}\rme^{S_n\varphi_{\infty}(y)}}{\sum_{x\in E}\rme^{S_n\varphi_{\infty}(x)}} &\geq& \frac{\sum_{y\in\sigma(E)}\rme^{S_n\varphi_{\infty}(y)}}{\sum_{x\in E}\rme^{S_n\varphi_{\infty}(x)}}\\
  &\geq& \min_{x\in E} \exp\left(\sum_{i=0}^{n-1}\left[\varphi_i(f_0^i(\sigma(x))) - \varphi_i(f_0^i(x))\right]\right) \geq \rme^{-n\delta}.%
\end{eqnarray*}
Therefore,%
\begin{eqnarray*}
  R\left(n,\frac{\ep}{2};\varphi_{\infty}\right) &>& \sum_{y\in F}\rme^{S_n\varphi_{\infty}(y)} - \lambda\\
                                                &\geq& \rme^{-n\delta}\sum_{x\in E}\rme^{S_n\varphi_{\infty}(x)} - \lambda
                                                \geq \rme^{-n\delta}\left[S(n,\ep;\varphi_{\infty}) - \lambda\right] - \lambda.%
\end{eqnarray*}
Hence, $R(n,\ep/2;\varphi_{\infty})\geq \rme^{-n\delta} S(n,\ep;\varphi_{\infty})$, since $\lambda$ was chosen arbitrarily, implying%
\begin{equation*}
  \limsup_{n\rightarrow\infty}\frac{1}{n}\log R\left(n,\frac{\ep}{2};\varphi_{\infty}\right) \geq -\delta + \limsup_{n\rightarrow\infty}\frac{1}{n}\log S(n,\ep;\varphi_{\infty}).%
\end{equation*}
First sending $\ep$ and then $\delta$ to zero yields the desired inequality.%
\end{proof}

The topological pressure can also be defined in terms of sequences of open covers. For the sake of completeness, we also introduce this definition. Let $\UC_{\infty} = (\UC_n)_{n\geq0}$ be a sequence of open covers for $X_{\infty}$ and put%
\begin{equation*}
  T(n,\UC_{\infty};\varphi_{\infty}) := \inf\left\{\sum_{V\in\VC}\inf_{x\in V}\rme^{S_n\varphi_{\infty}(x)}:\ \VC \mbox{ is a finite subcover of } \bigvee_{i=0}^{n-1}f_0^{-i}\UC_i\right\}.%
\end{equation*}
We leave it to the reader to verify that
\begin{equation*}
  P_{\tp}(f_{\infty};\varphi_{\infty}) = \sup_{\UC_{\infty} \in \LC(X_{\infty})}\limsup_{n\rightarrow\infty}\frac{1}{n}\log T(n,\UC_{\infty};\varphi_{\infty}).%
\end{equation*}

In order to prove a variational inequality, relating the two notions of pressure, we need the following (partial) power rules.%

\begin{lemma}\label{lem_metricpr}
Assume that $f_{\infty}$ is equicontinuous and let $\varphi_{\infty}$ be equicontinuous and uniformly bounded. For each $k\geq1$ we define another sequence $\varphi^{[k]}_{\infty} = (\psi_n)_{n\geq0}$, $\psi_n:X_{nk}\rightarrow\R$, by%
\begin{equation*}
  \psi_n := \sum_{j=0}^{k-1}\varphi_{nk+j} \circ f_{nk}^j.%
\end{equation*}
Then $\varphi^{[k]}_{\infty}$ is equicontinuous and uniformly bounded with%
\begin{equation}\label{eq_metricpowerrule}
  P_{\mu^{[k]}_{\infty}}\left(f^{[k]}_{\infty};\varphi^{[k]}_{\infty}\right) \geq k \cdot P_{\mu_{\infty}}\left(f_{\infty};\varphi_{\infty}\right),%
\end{equation}
where $\mu^{[k]}_{\infty}$ is the sequence defined by $\mu^{[k]}_n :\equiv \mu_{kn}$.%
\end{lemma}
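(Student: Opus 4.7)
The plan is to split the proof into three pieces: (i) verifying that $\varphi^{[k]}_\infty$ is equicontinuous and uniformly bounded, (ii) handling the Cesàro-integral part of the pressure, and (iii) handling the entropy part. For (i), uniform boundedness is immediate from $\|\psi_n\|_\infty \leq k\,\sup_m \|\varphi_m\|_\infty$. For equicontinuity, since $f_\infty$ is (uniformly) equicontinuous, a short induction shows that for each fixed $j\in\{0,\ldots,k-1\}$ the family $\{f_{nk}^j\}_{n\geq 0}$ of $j$-fold compositions is uniformly equicontinuous in $n$; precomposition with a uniformly equicontinuous family preserves equicontinuity of the $\varphi_m$'s, and a finite sum of equicontinuous families is again equicontinuous.

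For (ii), the key observation is that $(f_m)_*\mu_m = \mu_{m+1}$, hence iteratively $(f_m^j)_*\mu_m = \mu_{m+j}$. This allows one to rewrite
\begin{equation*}
  \int_{X_{nk}}\psi_n\,\rmd\mu_{nk} \;=\; \sum_{j=0}^{k-1}\int_{X_{nk}} \varphi_{nk+j}\circ f_{nk}^j\,\rmd\mu_{nk} \;=\; \sum_{j=0}^{k-1}\int_{X_{nk+j}}\varphi_{nk+j}\,\rmd\mu_{nk+j}.
\end{equation*}
Summing over $i=0,\ldots,n-1$ collapses the double sum into a single sum over $m=0,\ldots,nk-1$, giving
\begin{equation*}
  \frac{1}{n}\sum_{i=0}^{n-1}\int \psi_i\,\rmd\mu^{[k]}_i \;=\; k\cdot\frac{1}{nk}\sum_{m=0}^{nk-1}\int \varphi_m\,\rmd\mu_m.
\end{equation*}
Since the liminf along the subsequence $N=nk$ is always at least the full liminf, taking $\liminf_{n\to\infty}$ of the left-hand side produces a lower bound of $k\liminf_{N\to\infty}\frac{1}{N}\sum_{m=0}^{N-1}\int\varphi_m\,\rmd\mu_m$.

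For (iii), I would invoke the metric entropy power rule from \cite[Prop.~5]{Kaw}; equicontinuity of $f_\infty$ is precisely what ensures the admissible class $\EC_{\Mis}$ behaves correctly under passage to the $k$-th power system, yielding $h_{\EC_{\Mis}}(f^{[k]}_\infty;\mu^{[k]}_\infty) \geq k\,h_{\EC_{\Mis}}(f_\infty;\mu_\infty)$. Adding (ii) and (iii) gives \eqref{eq_metricpowerrule}. No single step is hard; the main point to keep in mind is that only ``$\geq$'' (not equality) holds, because the liminf along the thinned subsequence $N=nk$ can strictly exceed the full liminf, an asymmetry intrinsic to the use of $\liminf$ in the definition of $P_{\mu_\infty}$. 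The only bookkeeping care required is in unfolding the Birkhoff sum via the substitution $m = nk+j$.
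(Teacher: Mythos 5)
Your proposal is correct and follows essentially the same route as the paper: the integral term is unfolded via the push-forward identity $(f_{nk}^j)_*\mu_{nk}=\mu_{nk+j}$ and the subsequence inequality for $\liminf$, while the entropy term is handled by the metric entropy power rule together with the containment of the restricted admissible class $\EC_{\Mis}(f_{\infty})^{[k]}$ in $\EC_{\Mis}(f_{\infty}^{[k]})$. The only slight imprecision is your closing remark attributing the strict-inequality possibility solely to the thinned $\liminf$; the entropy half also contributes only an inequality, since the admissible class of the power system may be strictly larger than the restriction of the original one.
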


\begin{proof}
By definition, the metric pressure of $f^{[k]}_{\infty}$ w.r.t.~$\varphi^{[k]}_{\infty}$ is given by%
\begin{eqnarray*}
  h_{\EC_{\Mis}}\left(f^{[k]}_{\infty};\mu^{[k]}_{\infty}\right) + \liminf_{n\rightarrow\infty}\frac{1}{n}\sum_{i=0}^{n-1}\int_{X_{ik}}\sum_{j=0}^{k-1}\varphi_{ik+j} \circ f_{ik}^j \rmd\mu_{ik}.%
\end{eqnarray*}
For the second term we obtain%
\begin{eqnarray*}
  \liminf_{n\rightarrow\infty}\frac{1}{n}\sum_{i=0}^{n-1}\sum_{j=0}^{k-1} \int_{X_{ik+j}} \varphi_{ik+j} \rmd\mu_{ik+j}
      &=& k\cdot \liminf_{n\rightarrow\infty}\frac{1}{nk}\sum_{l=0}^{nk-1} \int_{X_l} \varphi_l \rmd\mu_l\\
   &\geq& k \cdot \liminf_{n\rightarrow\infty}\frac{1}{n}\sum_{i=0}^{n-1} \int_{X_i} \varphi_i \rmd\mu_i.%
\end{eqnarray*}
Using the fact that the admissible class $\EC_{\Mis}(f_{\infty})^{[k]}$, given by ``restriction'' of the sequences $\PC_{\infty}\in\EC_{\Mis}(f_{\infty})$ to the spaces $X_{nk}$, $n\in\N_0$, is contained in $\EC_{\Mis}(f^{[k]}_{\infty})$, with the power rule for metric entropy (cf.~\cite[Prop.~3.9]{Kaw}) we find%
\begin{equation*}
  h_{\EC_{\Mis}(f_{\infty}^{[k]})}\left(f_{\infty}^{[k]};\mu_{\infty}^{[k]}\right) \geq k \cdot h_{\EC_{\Mis}(f_{\infty})}\left(f_{\infty};\mu_{\infty}\right),%
\end{equation*}
implying \eqref{eq_metricpowerrule}. Equicontinuity and boundedness of $\varphi^{[k]}_{\infty}$ are easy to see.%
\end{proof}

\begin{lemma}\label{lem_toppr}
For every $k\geq1$ it holds that%
\begin{equation*}
  P_{\tp}\left(f_{\infty}^{[k]};\varphi^{[k]}_{\infty}\right) \leq k \cdot P_{\tp}\left(f_{\infty};\varphi_{\infty}\right).%
\end{equation*}
\end{lemma}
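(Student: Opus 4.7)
The plan is to work with the spanning-set characterization of $P_{\tp}$ and relate spanning sets for the $k$-th power system directly to spanning sets for $f_{\infty}$ at time $nk$.

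First I would establish two key identifications. The telescoping observation: for any $x \in X_0$,
\begin{equation*}
  S_n\varphi^{[k]}_{\infty}(x) = \sum_{i=0}^{n-1}\sum_{j=0}^{k-1}\varphi_{ik+j}\bigl(f_{ik}^j(f_0^{ik}(x))\bigr) = \sum_{l=0}^{nk-1}\varphi_l(f_0^l(x)) = S_{nk}\varphi_{\infty}(x),
\end{equation*}
using that $f^{[k],i}_0 = f_0^{ik}$. The metric comparison: the Bowen metric of order $n$ for $f^{[k]}_{\infty}$, denoted $d^{[k]}_{0,n}$, is the maximum over $j = 0,\ldots,n$ of $d_{jk}(f_0^{jk}(x),f_0^{jk}(y))$, hence $d^{[k]}_{0,n}(x,y) \leq d_{0,nk}(x,y)$.

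From the metric comparison, any $(nk,\ep)$-spanning set $F \subset X_0$ for $f_{\infty}$ is automatically $(n,\ep)$-spanning for $f^{[k]}_{\infty}$. Combining with the telescoping identity,
\begin{equation*}
  R(n,\ep;\varphi^{[k]}_{\infty}) \;\leq\; \sum_{x\in F} \rme^{S_n \varphi^{[k]}_{\infty}(x)} \;=\; \sum_{x\in F} \rme^{S_{nk}\varphi_{\infty}(x)},
\end{equation*}
where the first $R$ is computed in the power system. Taking the infimum over all $(nk,\ep)$-spanning $F$ gives $R(n,\ep;\varphi^{[k]}_{\infty}) \leq R(nk,\ep;\varphi_{\infty})$.

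Finally I would take $\limsup$ and use the identity $\limsup_{n\to\infty} \frac{1}{n}\log a_{nk} = k \cdot \limsup_{n\to\infty} \frac{1}{nk}\log a_{nk} \leq k \cdot \limsup_{m\to\infty} \frac{1}{m}\log a_m$, which yields
\begin{equation*}
  \limsup_{n\to\infty}\frac{1}{n}\log R(n,\ep;\varphi^{[k]}_{\infty}) \;\leq\; k \cdot \limsup_{m\to\infty}\frac{1}{m}\log R(m,\ep;\varphi_{\infty}),
\end{equation*}
and then letting $\ep \searrow 0$ delivers the claim. The argument is essentially bookkeeping; the only subtle point is verifying the telescoping identity for $S_n\varphi^{[k]}_{\infty}$ and that taking $\limsup$ along the subsequence $nk$ bounds it by $k$ times the full $\limsup$, which I do not expect to pose real difficulty.
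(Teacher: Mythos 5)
Your proposal is correct and follows the same route as the paper: spanning sets of order $nk$ for $f_{\infty}$ are spanning sets of order $n$ for the power system, the Birkhoff sums telescope to give $S_n\varphi^{[k]}_{\infty} = S_{nk}\varphi_{\infty}$, hence $R(n,\ep;\varphi^{[k]}_{\infty}) \leq R(nk,\ep;\varphi_{\infty})$, and the limsup along the subsequence is bounded by $k$ times the full limsup. You merely spell out the details that the paper compresses into ``clearly.''
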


\begin{proof}
Let $F\subset X_0$ be an $(nk,\ep;f_{\infty})$-spanning set. Then clearly $F$ is $(n,\ep;f_{\infty}^{[k]})$-spanning, implying%
\begin{equation*}
  R\left(n,\ep;\varphi^{[k]}_{\infty};f^{[k]}_{\infty}\right) \leq \sum_{x\in F}\rme^{S_n\varphi^{[k]}_{\infty}(x)} = \sum_{x\in F}\rme^{S_{nk}\varphi_{\infty}(x)}.
\end{equation*}
Since this holds for arbitrary $F$, we find $R(n,\ep;\varphi^{[k]}_{\infty};f^{[k]}_{\infty}) \leq R\left(nk,\ep;\varphi_{\infty};f_{\infty}\right)$, which yields the desired inequality of pressures.%
\end{proof}

Next we prove the variational inequality for pressure. We will use the following lemma, which can be found, e.g., in \cite[Lem.~20.2.2]{KHa}.%

\begin{lemma}\label{lem_hl}
If $\sum_{i=1}^n p_i = 1$, $p_i\geq0$, $a_i\in\R$, and $A = \sum_{i=1}^n \rme^{a_i}$, then $\sum_{i=1}^np_i(a_i-\log p_i)\leq \log A$ with equality if and only if $p_i = \rme^{a_i}/A$.%
\end{lemma}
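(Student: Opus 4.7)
The plan is to recognize the left-hand side as the expectation of $\log(\rme^{a_i}/p_i)$ under the distribution $(p_i)_{i=1}^n$ and to apply Jensen's inequality to the concave function $\log$. With the standard convention $0 \log 0 = 0$, I would first rewrite
\[
\sum_{i=1}^n p_i(a_i - \log p_i) = \sum_{i:\,p_i > 0} p_i \log \frac{\rme^{a_i}}{p_i},
\]
so that only the terms with $p_i > 0$ contribute.

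Then I would apply Jensen to this restricted sum, where the weights $p_i$ still sum to $1$. Concavity of $\log$ yields
\[
\sum_{i:\,p_i>0} p_i \log \frac{\rme^{a_i}}{p_i} \leq \log\!\left(\sum_{i:\,p_i>0} p_i \cdot \frac{\rme^{a_i}}{p_i}\right) = \log \sum_{i:\,p_i>0} \rme^{a_i} \leq \log A,
\]
which is the desired inequality.

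For the equality claim, I would combine the two steps. Strict concavity of $\log$ forces $\rme^{a_i}/p_i$ to be constant, equal to some $c$, on the support $\{i : p_i > 0\}$; then $c = \sum_{p_i>0} \rme^{a_i}$. The second inequality above is tight iff every $p_i$ is positive, in which case $c = A$ and $p_i = \rme^{a_i}/A$ for all $i$. The converse direction follows by direct substitution.

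There is essentially no obstacle here — this is one of the standard avatars of the Gibbs inequality. An equally short route would be to set $q_i := \rme^{a_i}/A$, observe that $(q_i)$ is a probability vector, and reduce the claim to the relative entropy inequality $\sum_i p_i \log(q_i/p_i) \leq 0$, which follows from $\log x \leq x - 1$ with equality iff $x = 1$. The only bookkeeping required is at indices with $p_i = 0$, handled by restricting to the support as above.
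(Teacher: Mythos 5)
Your proof is correct and complete; the handling of indices with $p_i=0$ via the convention $0\log 0=0$ and the restriction to the support is exactly the point that needs care, and you treat both the Jensen step and the enlargement of the sum from the support to all of $\{1,\dots,n\}$ (which is where positivity of every $\rme^{a_i}$ enters the equality analysis) correctly. Note that the paper does not prove this lemma at all -- it simply cites it as \cite[Lem.~20.2.2]{KHa} -- so there is no in-paper argument to compare against; your Jensen/Gibbs argument is the standard one found in the cited reference and in Walters' book, and either of your two routes (concavity of $\log$, or relative entropy via $\log x\leq x-1$) would serve equally well.
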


\begin{theorem}\label{thm_varineq}
If $f_{\infty}$ is equicontinuous and $\varphi_{\infty}$ is equicontinuous and uniformly bounded, then%
\begin{equation*}
  P_{\mu_{\infty}}(f_{\infty};\varphi_{\infty}) \leq P_{\tp}(f_{\infty};\varphi_{\infty}).%
\end{equation*}
\end{theorem}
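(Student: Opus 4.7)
The plan is to run a Misiurewicz-style comparison argument adapted to the nonautonomous setting, producing an inequality with a $\log 2$ overhead, and then to clean up that overhead at the end using the partial power rules (Lemma \ref{lem_metricpr} and Lemma \ref{lem_toppr}).

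First I would fix an arbitrary $\PC_\infty \in \EC_\Mis$ with $\#\PC_n \leq k$ uniformly and, for given $\epsilon > 0$, invoke the definition of $\EC_\Mis$ to produce $\delta > 0$ and compact sets $K_{n,i} \subset P_{n,i}$ with $\mu_n(P_{n,i}\setminus K_{n,i}) \leq \epsilon$ and $D(K_{n,i},K_{n,j}) \geq \delta$ for $i\neq j$. Augmenting with the ``garbage'' cell $K_{n,0} := X_n \setminus \bigcup_{i\geq 1} K_{n,i}$ (which has $\mu_n$-mass at most $k\epsilon$) I form the partitions $\beta_n = \{K_{n,0},K_{n,1},\ldots\}$ of $X_n$. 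A standard conditional-entropy estimate applied time by time (on $K_{n,l}$, $l\geq1$, $\PC_n$ is trivial; on $K_{n,0}$ it has at most $k$ cells) gives
\[
  H_{\mu_0}\left(\bigvee_{i=0}^{n-1} f_0^{-i}\PC_i\right) \leq H_{\mu_0}\left(\bigvee_{i=0}^{n-1} f_0^{-i}\beta_i\right) + nk\epsilon \log k.
\]

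Next I enumerate the non-empty cells of $\bigvee f_0^{-i}\beta_i$ as $B_\alpha$, $\alpha \in \{0,\ldots,k\}^n$, choose $x_\alpha \in B_\alpha$ maximizing $S_n\varphi_\infty$ on $B_\alpha$, and apply Lemma \ref{lem_hl} with $p_\alpha := \mu_0(B_\alpha)$ and $a_\alpha := S_n\varphi_\infty(x_\alpha)$. The choice of maximizer yields $\sum_\alpha \mu_0(B_\alpha) S_n\varphi_\infty(x_\alpha) \geq \int S_n\varphi_\infty\, \rmd\mu_0 = \sum_{i=0}^{n-1}\int \varphi_i\, \rmd\mu_i$, so
\[
  H_{\mu_0}\left(\bigvee_{i=0}^{n-1} f_0^{-i}\beta_i\right) + \sum_{i=0}^{n-1}\int\varphi_i\, \rmd\mu_i \leq \log\sum_\alpha \rme^{S_n\varphi_\infty(x_\alpha)}.
\]
To control the right-hand side I group the exponential sum by the coarser two-element partitions $\xi_n := \{K_{n,0}, X_n\setminus K_{n,0}\}$. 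Within any cell $D_\eta$ of $\bigvee f_0^{-i}\xi_i$ the $\alpha$'s share a common zero-pattern, so any two distinct $\alpha,\alpha'$ with $B_\alpha, B_{\alpha'}\subset D_\eta$ differ at some coordinate $i$ where $\alpha_i,\alpha'_i \geq 1$; the chosen points then satisfy $d_i(f_0^i(x_\alpha),f_0^i(x_{\alpha'})) \geq \delta$, so $\{x_\alpha : B_\alpha\subset D_\eta\}$ is $(n,\delta)$-separated. Since $\bigvee f_0^{-i}\xi_i$ has at most $2^n$ cells, the total exponential sum is bounded by $2^n S(n,\delta;\varphi_\infty)$.

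Combining, dividing by $n$, and taking $\limsup_n$ while using $\limsup a_n + \liminf b_n \leq \limsup(a_n+b_n)$ to split the entropy and integral contributions, I arrive at
\[
  h(f_\infty;\PC_\infty;\mu_\infty) + \liminf_n \frac{1}{n}\sum_{i=0}^{n-1}\int \varphi_i\, \rmd\mu_i \leq \limsup_n \frac{1}{n}\log S(n,\delta;\varphi_\infty) + \log 2 + k\epsilon\log k.
\]
Passing to the supremum over $\PC_\infty\in\EC_\Mis$, then $\delta\searrow 0$ and $\epsilon\searrow 0$, gives the intermediate inequality $P_{\mu_\infty}(f_\infty;\varphi_\infty) \leq P_\tp(f_\infty;\varphi_\infty) + \log 2$. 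Applying this to the $m$-th power system and chaining with Lemma \ref{lem_metricpr} and Lemma \ref{lem_toppr} yields
\[
  m\,P_{\mu_\infty}(f_\infty;\varphi_\infty) \leq P_{\mu^{[m]}_\infty}(f^{[m]}_\infty;\varphi^{[m]}_\infty) \leq P_\tp(f^{[m]}_\infty;\varphi^{[m]}_\infty) + \log 2 \leq m\,P_\tp(f_\infty;\varphi_\infty) + \log 2;
\]
dividing by $m$ and sending $m\to\infty$ kills the $\log 2$ and finishes the proof. The main obstacle is precisely the $\log 2$ overhead arising from the coarser $\xi_n$-partition: it does not vanish directly and is only cleaned up by the power-rule step, so the setup has to be arranged so that the (one-sided) power rules from Section~\ref{sec_prelim} apply verbatim. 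A minor subtlety is that $P_{\mu_\infty}$ couples a $\limsup$ (the entropy) with a $\liminf$ (the ergodic average), forcing the use of the one-sided $\limsup/\liminf$ inequality rather than a plain additivity of limits.
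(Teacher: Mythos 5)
Your proof is correct and follows essentially the same Misiurewicz-style route as the paper: the same passage to a compact-core partition with a garbage cell, the same application of Lemma \ref{lem_hl} to the maximizers of $S_n\varphi_\infty$, the same $2^n$ overhead, and the same power-rule trick (Lemmas \ref{lem_metricpr} and \ref{lem_toppr}) to remove the resulting additive constant. The only cosmetic difference is that you bound the exponential sum by $(n,\delta)$-separated sets after grouping cells by their garbage-cell pattern, whereas the paper bounds it via an $(n,\alpha)$-spanning set together with the observation that each small ball meets at most two cells of the modified partition --- both devices produce the identical factor $2^n$.
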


\begin{proof}
Take a sequence $\PC_{\infty}\in\EC_{\Mis}$. We may assume that each $\PC_n$ has the same number $k$ of elements, $\PC_n = \{P_{n,1},\ldots,P_{n,k}\}$. Exactly as in the proof of the variational principle for the entropy (cf.~\cite[Thm.~28]{Kaw}), we define $\QC_{\infty} = (\QC_n)_{n\geq0}$, $\QC_n = \{Q_{n,0},Q_{n,1},\ldots,Q_{n,k}\}$, with $H_{\mu_n}(\PC_n|\QC_n) \leq 1$ and obtain%
\begin{equation*}
  h(f_{\infty};\PC_{\infty}) \leq h(f_{\infty};\QC_{\infty}) + 1.%
\end{equation*}
By the choice of $\QC_{\infty}$ there is $\delta>0$ such that for all $n\geq0$,%
\begin{equation*}
  \min_{1\leq i < j \leq k} \min\left\{d_n(x,y)\ :\ (x,y) \in Q_{n,i}\tm Q_{n,j} \right\} \geq \delta.%
\end{equation*}
By equicontinuity of $\varphi_{\infty}$, there is $\alpha \in (0,\delta/2)$ such that $|\varphi_n(x)-\varphi_n(y)|\leq 1$ whenever $d_n(x,y) < \alpha$. For every $C \in \bigvee_{i=0}^{n-1}f_0^{-i}\QC_i$ there is $x_C \in \cl C$ with $S_n\varphi_{\infty}(x_C) = \sup_{x\in C} S_n\varphi_{\infty}(x)$. Now suppose that $E_n\subset X_0$ is $(n,\alpha)$-spanning. Then we can take $y_C \in E_n$ with $d_{0,n}(x_C,y_C)\leq \alpha$, and hence $S_n\varphi_{\infty}(x_C) \leq S_n\varphi_{\infty}(y_C)+n$. Note also that $\alpha < \delta/2$ implies $\#\{C\in\bigvee_{i=0}^{n-1}f_0^{-i}\QC_i\ : \ y_C=y\} \leq 2^n$ for all $y\in E_n$. (This is equivalent to the statement that every ball of radius $\alpha$ in $(X_i,d_i)$ intersects at most two elements of $\QC_i$, which holds by the definition of $\QC_{\infty}$.) Writing $\QC^{(n)} := \bigvee_{i=0}^{n-1}f_0^{-i}\QC_i$ and using Lemma \ref{lem_hl}, it follows that%
\begin{eqnarray*}
  H_{\mu_0}\left(\QC^{(n)}\right) + \int S_n\varphi_{\infty} \rmd\mu_0 &\leq& \sum_{C\in\QC^{(n)}}\mu_0(C)\left(-\log\mu_0(C) + S_n\varphi_{\infty}(x_C)\right)\allowdisplaybreaks\\
  &\leq& \log \sum_{C\in\QC^{(n)}}\rme^{S_n\varphi_{\infty}(y_C)+n}\allowdisplaybreaks\\
  &=& n + \log\left(\sum_{x\in E_n}\sum_{C\in\QC^{(n)}\atop y_C=x}\rme^{S_n\varphi_{\infty}(x)}\right)\allowdisplaybreaks\\
  &\leq& n + \log\left(2^n\sum_{x\in E_n}\rme^{S_n\varphi_{\infty}(x)}\right),%
\end{eqnarray*}
and thus%
\begin{equation*}
  \frac{1}{n}H_{\mu_0}(\QC^{(n)}) + \frac{1}{n}\int S_n\varphi_{\infty}\rmd\mu_0 \leq 1 + \log 2 + \frac{1}{n}\log\sum_{x\in E_n}\rme^{S_n\varphi_{\infty}(x)}.%
\end{equation*}
Hence, we obtain%
\begin{eqnarray*}
  h(f_{\infty};\PC_{\infty}) &+& \liminf_{n\rightarrow\infty}\frac{1}{n}\int S_n\varphi_{\infty}\rmd\mu_0\allowdisplaybreaks\\
   &\leq& 1 + h(f_{\infty};\QC_{\infty}) + \liminf_{n\rightarrow\infty}\frac{1}{n}\int S_n\varphi_{\infty}\rmd\mu_0\allowdisplaybreaks\\
    &\leq& 1 + \limsup_{n\rightarrow\infty}\left[\frac{1}{n}H_{\mu_0}(\QC^{(n)}) + \frac{1}{n}\int S_n \varphi_{\infty}\rmd\mu_0\right]\allowdisplaybreaks\\
    &\leq& 2 + \log 2 + \limsup_{n\rightarrow\infty}\frac{1}{n}\log\sum_{x\in E_n}\rme^{S_n\varphi_{\infty}(x)}.%
\end{eqnarray*}
Since $\alpha$ can be taken arbitrarily small and $\PC_{\infty}$ was chosen arbitrarily from $\EC_{\Mis}$,%
\begin{equation*}
  h_{\EC_{\Mis}}(f_{\infty}) + \liminf_{n\rightarrow\infty}\frac{1}{n}\int S_n\varphi_{\infty}\rmd\mu_0 \leq 2 + \log 2 + P_{\tp}(f_{\infty};\varphi_{\infty}).
\end{equation*}
The same estimate holds if we replace $f_{\infty}$ by $f_{\infty}^{[k]}$ and $\varphi_{\infty}$ by $\varphi_{\infty}^{[k]}$. Using the partial power rules, Lemma \ref{lem_metricpr} and Lemma \ref{lem_toppr}, this yields%
\begin{eqnarray*}
  P_{\mu_{\infty}}\left(f_{\infty};\varphi_{\infty}\right) &\leq& \frac{1}{k} P_{\mu_{\infty}^{[k]}}\left(f_{\infty}^{[k]};\varphi_{\infty}^{[k]}\right)\\
  &\leq& \frac{2 + \log 2 + P_{\tp}(f_{\infty}^{[k]};\varphi_{\infty}^{[k]})}{k} \leq \frac{2+\log 2}{k} + P_{\tp}(f_{\infty};\varphi_{\infty}).%
\end{eqnarray*}
Sending $k$ to infinity concludes the proof.%
\end{proof}

\section{Independence from the initial measure}\label{sec_indep}%

In this section, we prove that the metric entropy of an expanding NDS is independent of the initial measure as long as it has a positive Lipschitz density w.r.t.~the Riemannian volume. We recall the following property of the metric entropy from \cite[Prop.~9(vii)]{Kaw}.%

\begin{proposition}\label{prop_kkp1}
Let $(X_{\infty},f_{\infty})$ be a metric NDS and $\PC_{\infty}$ a sequence of finite measurable partitions. Then $h(f_{k,\infty};\PC_{k,\infty}) = h(f_{k+1,\infty};\PC_{k+1,\infty})$ for all $k\geq0$.%
\end{proposition}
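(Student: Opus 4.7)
The plan is to relate the two entropies via the one-step recursion
$$\PC_k^{(n+1)} = \PC_k \vee f_k^{-1}\PC_{k+1}^{(n)},$$
where I use the shorthand $\PC_j^{(m)} := \bigvee_{i=0}^{m-1} f_j^{-i}\PC_{j+i}$ for the join appearing in the definition of $h(f_{j,\infty};\PC_{j,\infty})$. This identity is obtained by pulling the $i=0$ term out of the join defining $\PC_k^{(n+1)}$ and then applying the cocycle relation $f_k^{-(i+1)} = f_k^{-1}\circ f_{k+1}^{-i}$, which is the inverse of $f_k^{i+1} = f_{k+1}^{i}\circ f_k$.

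The second step is to invoke the invariance of partition entropy under measure-preserving maps: because $f_k \mu_k \equiv \mu_{k+1}$, one has $H_{\mu_k}(f_k^{-1}\alpha) = H_{\mu_{k+1}}(\alpha)$ for every finite measurable partition $\alpha$ of $X_{k+1}$. Together with the monotonicity of entropy under refinement ($H_{\mu_k}(\beta\vee\gamma) \geq H_{\mu_k}(\gamma)$) and the standard subadditivity ($H_{\mu_k}(\beta\vee\gamma) \leq H_{\mu_k}(\beta) + H_{\mu_k}(\gamma)$), the recursion yields the sandwich
$$H_{\mu_{k+1}}\bigl(\PC_{k+1}^{(n)}\bigr) \;\leq\; H_{\mu_k}\bigl(\PC_k^{(n+1)}\bigr) \;\leq\; H_{\mu_k}(\PC_k) + H_{\mu_{k+1}}\bigl(\PC_{k+1}^{(n)}\bigr).$$

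Finally, I would divide through by $n+1$ and let $n\to\infty$. The boundary term $H_{\mu_k}(\PC_k)/(n+1)$ vanishes in the limit because $\PC_k$ is finite, and the elementary identity $\limsup_n a_{n+1}/(n+1) = \limsup_n a_n/n$ converts both sides into inequalities between the two entropies $h(f_{k,\infty};\PC_{k,\infty})$ and $h(f_{k+1,\infty};\PC_{k+1,\infty})$; the two combined yield equality. No step here presents a genuine obstacle; the only thing demanding care is keeping track of which measure occurs on each side of the sandwich when applying the push-forward invariance of partition entropy.
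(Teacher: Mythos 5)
Your proposal is correct and complete. Note that the paper does not actually prove this proposition --- it is imported verbatim from \cite[Prop.~9(vii)]{Kaw} --- so there is no in-paper argument to compare against; your proof (peel off the $i=0$ term via the cocycle identity $f_k^{-(i+1)} = f_k^{-1}\circ f_{k+1}^{-i}$, transport entropy with $f_k\mu_k\equiv\mu_{k+1}$, sandwich between monotonicity and subadditivity, divide by $n$) is exactly the standard one. The only point worth making explicit is the final reindexing of the $\limsup$: since $H_{\mu_k}\bigl(\PC_k^{(n)}\bigr)\geq0$, the identity $\limsup_n a_{n+1}/(n+1)=\limsup_n a_n/n$ and the harmless factor $(n+1)/n$ cause no trouble even when the entropy is infinite (no uniform bound on $\#\PC_n$ is assumed here), so both inequalities pass to the limit as claimed.
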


The following result can be found as Example 36 in \cite{Kaw}.%

\begin{proposition}\label{prop_lmc}
Consider an NDS $(M,f_{\infty})$ with $f_n\in\EC(\lambda,\Gamma)$ and let $\varphi\in\DC$. Then the IMS defined by $\mu_0 := \varphi\rmd m$ and $\mu_n := f_0^n\mu_0$ for all $n\geq1$ has the property that the elements of the weak$^*$-closure of $\{\mu_n\}_{n\geq0}$ are pairwisely equivalent.%
\end{proposition}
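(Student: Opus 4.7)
The plan is to use Proposition \ref{prop_osy} to reduce the statement to a compactness argument for $\DC_{L^*}$ in the $C^0$-topology with uniform two-sided positive bounds on the densities. I would first note that each individual $\mu_n$ has a strictly positive Lipschitz density $\varphi_n = \PC_{f_0^n}(\varphi)$ with respect to $m$: because $\varphi>0$ on $M$ and each $f_0^n$ is a surjective covering map, the sum defining $\PC_{f_0^n}(\varphi)(x)$ is a nonempty sum of positive terms. In particular every $\mu_n$ is equivalent to $m$, so each finite-index element of the weak$^*$-closure is already taken care of.

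Second, picking $L>0$ with $\varphi\in\DC_L$, Proposition \ref{prop_osy} gives $\varphi_n\in\DC_{L^*}$ for all $n\geq\tau(L)$. I would then show that $\DC_{L^*}$ is a bounded, equicontinuous family in $C(M)$ whose members admit a common strictly positive lower bound. The key step is to turn the local ratio condition $\varphi(x)\leq(1+L^*d(x,y))\varphi(y)$ for $d(x,y)<\ep$ into uniform global bounds: covering $M$ by finitely many balls of radius $\ep/2$ and using that $M$ is connected gives a uniform bound $N$ on the length of a chain needed to join any two points, so $\max\varphi/\min\varphi\leq(1+L^*\ep)^N=:K$ for every $\varphi\in\DC_{L^*}$. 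Together with $\int\varphi\rmd m=1$ and $m(M)=1$ this forces $1/K\leq\varphi\leq K$, which in turn yields a uniform Lipschitz constant via $|\varphi(x)-\varphi(y)|\leq L^*K\,d(x,y)$ for $d(x,y)<\ep$ (and the trivial bound $|\varphi(x)-\varphi(y)|\leq 2K\leq(2K/\ep)d(x,y)$ otherwise). By Arzel\`a-Ascoli, $\DC_{L^*}$ is relatively compact in $C(M)$.

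Finally, let $\mu$ lie in the weak$^*$-closure of $\{\mu_n\}_{n\geq 0}$. If $\mu=\mu_n$ for some $n$ the first step applies. Otherwise $\mu=\lim_k\mu_{n_k}$ along a subsequence $n_k\to\infty$, and eventually $\varphi_{n_k}\in\DC_{L^*}$; by the compactness just established a subsubsequence $\varphi_{n_{k_j}}$ converges uniformly to some $\psi\in C(M)$ with $\psi\geq 1/K>0$. Uniform convergence implies weak$^*$-convergence of the associated measures, so uniqueness of the weak$^*$-limit gives $\mu=\psi\rmd m$. Every element of the closure thus has a strictly positive continuous density with respect to $m$, hence is equivalent to $m$, and consequently pairwise equivalent. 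The main technical step is the passage from the local ratio condition defining $\DC_{L^*}$ to uniform two-sided positive bounds; once that is in place the argument is standard Arzel\`a-Ascoli plus uniqueness of weak$^*$-limits.
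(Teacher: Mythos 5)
Your proof is correct. The paper does not reprove this proposition but cites \cite[Ex.~36]{Kaw}, and the way that citation is later invoked (a uniform positive lower bound $\kappa$ for all densities in $\DC_{L^*}$, valid for $\varphi_n$ once $n\geq\tau(L)$) shows that your argument --- Proposition \ref{prop_osy} plus the chain/Harnack bound turning the local ratio condition into uniform two-sided bounds, then Arzel\`a--Ascoli and uniqueness of weak$^*$ limits --- is essentially the intended one.
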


The next corollary is an immediate consequence of the variational inequality \cite[Thm.~28]{Kaw} and \cite[Prop.~34]{Kaw}.%

\begin{corollary}
For any finite Borel partition $\PC$ of $M$ whose elements have boundaries of volume zero the inequality $h(f_{\infty};\PC;\mu_{\infty}) \leq h_{\tp}(f_{\infty})$ holds if $\mu_{\infty}$ is as in Proposition \ref{prop_lmc}.%
\end{corollary}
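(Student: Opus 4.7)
The plan is to apply the variational inequality $h_{\EC_{\Mis}}(f_{\infty}) \leq h_{\tp}(f_{\infty})$ from \cite[Thm.~28]{Kaw} (recorded in Section~\ref{sec_prelim}) to the constant sequence $\PC_n \equiv \PC$. Everything therefore reduces to showing that this constant sequence lies in the admissible class $\EC_{\Mis}$: for every $\ep>0$ one must find $\delta>0$ and compact sets $K_i \subset P_i$ with $\mu_n(P_i \setminus K_i) \leq \ep$ and mutual distance at least $\delta$, uniformly in $n$. This is the content of \cite[Prop.~34]{Kaw}; I sketch the argument below.

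Set $S := \bigcup_i \partial P_i$, a closed set with $m(S)=0$ by hypothesis, and for $\eta>0$ put $K_i^{\eta} := P_i \cap \{x : d(x,S) \geq \eta\}$. Since points at distance $\geq \eta$ from $S$ cannot lie on $\partial P_i \subset S$, $K_i^{\eta}$ is a closed (hence compact) subset of the interior of $P_i$. Taking $\eta$ smaller than a geodesic convexity radius on $M$, for $x \in K_i^{\eta}$ any minimizing geodesic from $x$ to $y \in B_{\eta}(x)$ stays inside $B_{\eta}(x)$ and thus cannot meet $\partial P_i$, which forces $B_{\eta}(x) \subset P_i$ and hence $d(K_i^{\eta}, K_j^{\eta}) \geq \eta$ whenever $i \neq j$. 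The geometric part is thus routine; the real task is the measure-theoretic claim that for every $\ep>0$ there exists $\eta>0$ with $\mu_n(N_{\eta}(S)) \leq \ep$ for all $n$, where $N_{\eta}(S)$ is the open $\eta$-neighborhood of $S$.

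I would establish this uniform estimate by contradiction. Assume there are $\ep>0$, $\eta_k \searrow 0$ and $n_k \in \N_0$ with $\mu_{n_k}(N_{\eta_k}(S)) > \ep$. If $(n_k)$ has a bounded subsequence, some fixed $\mu_n$ gives, by continuity of measure from above, $\mu_n(S) \geq \ep$; but $\mu_n$ is absolutely continuous w.r.t.\ $m$ (its density is $\PC_{f_0^n}\varphi$) and $m(S)=0$, a contradiction. Otherwise $n_k \to \infty$, and after extracting a weak${}^*$-convergent subsequence $\mu_{n_k} \to \nu$, the Portmanteau theorem applied to the closed sets $\overline{N_{\eta}(S)}$ gives $\nu(\overline{N_{\eta}(S)}) \geq \ep$ for every $\eta>0$, hence $\nu(S) \geq \ep$. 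But Proposition~\ref{prop_lmc} says the members of the weak${}^*$-closure of $\{\mu_n\}$ are pairwise equivalent; since $\mu_0 = \varphi\rmd m \sim m$ also lies in this closure, it follows that $\nu \sim m$ and therefore $\nu(S)=0$, a contradiction.

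Once the constant sequence $\PC_n \equiv \PC$ is shown to lie in $\EC_{\Mis}$ the corollary drops out at once,
\begin{equation*}
  h(f_{\infty};\PC;\mu_{\infty}) \leq h_{\EC_{\Mis}}(f_{\infty}) \leq h_{\tp}(f_{\infty}).
\end{equation*}
The main obstacle throughout is the uniform-in-$n$ smallness of $\mu_n$ on neighborhoods of $S$; this is precisely the role of Proposition~\ref{prop_lmc} and explains why the hypothesis on $\mu_{\infty}$ cannot be dropped.
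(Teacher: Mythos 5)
Your proposal is correct and follows exactly the route the paper takes: the corollary is deduced by showing the constant sequence $\PC_n \equiv \PC$ lies in $\EC_{\Mis}$ (the content of \cite[Prop.~34]{Kaw}, which you re-derive via the neighborhood-of-the-boundary construction and the weak$^*$-equivalence from Proposition \ref{prop_lmc}) and then invoking the variational inequality \cite[Thm.~28]{Kaw}. The only difference is that you supply the details of the admissibility argument, which the paper handles by citation.
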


In order to prove our next result, we need to introduce the renormalization of densities in $\DC_{L^*}$, where $L^*$ is given by Proposition \ref{prop_osy}. For $\varphi \in \DC_{L^*}$ we put%
\begin{equation*}
  \hat{\varphi} := \frac{\varphi-\frac{1}{2}\kappa}{1-\frac{1}{2}\kappa \cdot m(M)},%
\end{equation*}
where $\kappa>0$ is a fixed lower bound for the functions in $\DC_{L^*}$, whose existence is guaranteed by the proof of Proposition \ref{prop_lmc} (see \cite[Ex.~36]{Kaw}). In \cite{OSY} we find the following lemma.%

\begin{lemma}\label{lem_renorm}
For every $\varphi\in\DC_{L^*}$ it holds that $\hat{\varphi}\in\DC_{2L^*}$.%
\end{lemma}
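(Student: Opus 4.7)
The plan is to unpack the definition $\hat{\varphi} = (\varphi - \tfrac12\kappa)/(1 - \tfrac12\kappa \cdot m(M))$ and verify the three defining properties of $\DC$, then check the quantitative bound separating $\DC_{2L^*}$ from the general class $\DC$. The three defining properties come almost for free: since $\varphi \geq \kappa$ pointwise, the numerator satisfies $\varphi - \tfrac12\kappa \geq \tfrac12\kappa > 0$; the denominator equals $1 - \tfrac12\kappa \cdot m(M)$, which is positive because $\kappa \leq \int \varphi \rmd m / m(M) = 1/m(M)$ (with $m(M)=1$, this is $\kappa \leq 1 < 2$); Lipschitz continuity is preserved under subtraction of a constant and division by a constant; and the integral normalization is immediate:
\begin{equation*}
  \int \hat{\varphi} \rmd m = \frac{\int \varphi \rmd m - \tfrac12\kappa \cdot m(M)}{1 - \tfrac12\kappa \cdot m(M)} = 1.
\end{equation*}

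The only substantive step is the ratio estimate. The crucial observation is that the common denominator $1-\tfrac12\kappa \cdot m(M)$ cancels in the quotient $\hat{\varphi}(x)/\hat{\varphi}(y)$, yielding
\begin{equation*}
  \frac{\hat{\varphi}(x)}{\hat{\varphi}(y)} - 1 = \frac{\varphi(x) - \varphi(y)}{\varphi(y) - \tfrac12\kappa}.
\end{equation*}
Applying the hypothesis $\varphi \in \DC_{L^*}$ in the form $|\varphi(x) - \varphi(y)| \leq L^* \varphi(y) \, d(x,y)$ (valid when $d(x,y) < \ep$), I would then bound the denominator from below using $\varphi(y) \geq \kappa$, which gives $\varphi(y) - \tfrac12\kappa \geq \tfrac12 \varphi(y)$. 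Combining these two inequalities produces
\begin{equation*}
  \left|\frac{\hat{\varphi}(x)}{\hat{\varphi}(y)} - 1\right| \leq \frac{L^* \varphi(y) \, d(x,y)}{\tfrac12 \varphi(y)} = 2L^* d(x,y),
\end{equation*}
which is precisely the $\DC_{2L^*}$ condition.

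There is no real obstacle here; the choice of the constant $\tfrac12\kappa$ in the renormalization is calibrated exactly so that $\varphi - \tfrac12\kappa \geq \tfrac12\varphi$, absorbing the shift into a factor of $2$ on the Lipschitz constant. The only thing to keep an eye on is confirming that the denominator $1 - \tfrac12\kappa \cdot m(M)$ is strictly positive so that $\hat{\varphi}$ is well-defined and still has positive values; this is guaranteed by the assumption $m(M)=1$ together with $\kappa \leq 1$, which in turn follows because $\kappa$ is a pointwise lower bound for densities integrating to $1$ against $m$.
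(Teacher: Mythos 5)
Your proof is correct, and it is exactly the standard argument: the paper itself states this lemma without proof, citing \cite{OSY}, and the computation there is the same one you give. The key points --- the cancellation of the normalizing constant in the quotient, the reformulation of the $\DC_{L^*}$ condition as $|\varphi(x)-\varphi(y)|\leq L^*\varphi(y)\,d(x,y)$, and the bound $\varphi(y)-\tfrac12\kappa\geq\tfrac12\varphi(y)$ coming from $\varphi\geq\kappa$ --- are all in order, as is the check that the denominator $1-\tfrac12\kappa\cdot m(M)$ is positive.
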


\begin{lemma}\label{lem_suplem}
Assume $f_i\in\EC(\lambda,\Gamma)$ for $i\geq0$. Then for every $N\in\N$ there exists a constant $C>0$ such that
\begin{equation*}
  \sup_{x\in M}\left|(\PC_{f_i^r}\varphi)(x)\right| \leq C \sup_{x\in M}|\varphi(x)|%
\end{equation*}
for all $\varphi\in\CC^0(M)$, $i\geq0$ and $r \in \{0,1,\ldots,N\}$.%
\end{lemma}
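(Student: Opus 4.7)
The plan is to exploit two basic ingredients: first, the multiplicativity of the transfer operator, $\PC_{g\circ f} = \PC_g\circ \PC_f$, which reduces the estimate for $\PC_{f_i^r}$ to $r$ applications of a single-step estimate; second, that $f\in\EC(\lambda,\Gamma)$ forces uniform (in $f$) control of both $|\det \rmD f|^{-1}$ from below and $\#f^{-1}(x)$ from above.

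Fix $d:=\dim M$. The expanding hypothesis $|\rmD f(v)|\ge\lambda|v|$ immediately gives the pointwise bound $|\det \rmD f(y)|\ge\lambda^d$ for every $y\in M$. On the other hand, $\|f\|_{\CC^2}\le\Gamma$ yields an upper bound $|\det \rmD f(y)|\le K(\Gamma)$ (where $K(\Gamma)$ depends only on $\Gamma$ and $d$; one can take $K(\Gamma)=\Gamma^d$ via an operator-norm estimate). Since $f$ is a covering map of a connected manifold, the fibre cardinality $\#f^{-1}(x)$ is a constant, namely the degree $d_f$, and the standard change-of-variables identity (apply the local-diffeomorphism formula on a partition subordinate to an even cover) gives
\begin{equation*}
  d_f \;=\; d_f\cdot m(M) \;=\; \int_M |\det \rmD f|\,\rmd m \;\le\; K(\Gamma).
\end{equation*}
Hence $\#f^{-1}(x)\le K(\Gamma)$ uniformly in $x$ and in $f\in\EC(\lambda,\Gamma)$.

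Combining these, for any $\varphi\in\CC^0(M)$ and any $f\in\EC(\lambda,\Gamma)$,
\begin{equation*}
  |\PC_f(\varphi)(x)| \;\le\; \sum_{y\in f^{-1}(x)}\frac{|\varphi(y)|}{|\det \rmD f(y)|} \;\le\; \frac{K(\Gamma)}{\lambda^d}\,\sup_{M}|\varphi|.
\end{equation*}
Set $c:=\max\{1,K(\Gamma)/\lambda^d\}$. Iterating via $\PC_{f_i^r}=\PC_{f_{i+r-1}}\circ\cdots\circ\PC_{f_i}$, we get $\sup_M|\PC_{f_i^r}\varphi|\le c^r\sup_M|\varphi|$ for every $r\ge 0$ (the case $r=0$ being trivial since $\PC_{\id}=\id$). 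For $r\in\{0,1,\ldots,N\}$ this is dominated by $C:=c^N$, which depends only on $\lambda,\Gamma,d,N$ and not on $i$ or on the particular sequence $(f_i)$.

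There is no real obstacle; the only step worth stating carefully is the degree bound $d_f\le K(\Gamma)$, which is a standard consequence of the change-of-variables formula applied to the covering map $f$. Everything else is a one-line pointwise estimate plus iteration.
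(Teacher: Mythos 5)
Your proposal is correct and follows essentially the same route as the paper: bound the fibre cardinality of a map in $\EC(\lambda,\Gamma)$ by $\Gamma^{\dim M}$ via a volume/change-of-variables argument, bound the Jacobian from below by $\lambda^{\dim M}$, and combine. The only cosmetic difference is that you iterate the one-step operator bound using $\PC_{f_i^r}=\PC_{f_{i+r-1}}\circ\cdots\circ\PC_{f_i}$, whereas the paper estimates the $r$-fold composition directly by bounding $\#f_i^{-r}(x)\leq\Gamma^{N\dim M}$; both yield a constant depending only on $\lambda,\Gamma,\dim M,N$.
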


\begin{proof}
We may assume $r\geq1$. Since the expansion factor $\lambda$ is fixed for all $f_i$,%
\begin{equation*}
  \left|\det \rmD f_i^r(x)\right| = \prod_{l=0}^{r-1} \left|\det \rmD f_{i+l}(f_i^l(x))\right| \geq \prod_{l=0}^{r-1} \lambda^{\dim M} \geq \lambda^{\dim M}.%
\end{equation*}
Since the $\CC^1$-norm of all $f_i$ is bounded by $\Gamma$, $\#f_i^{-r}(x)$ is bounded. To see this, take $f\in\EC(\lambda,\Gamma)$ and consider an open cover of $M$ consisting of finitely many evenly covered connected sets $V_1,\ldots,V_n$. By taking appropriate intersections, we may assume that the $V_i$ form a partition of $M$ (though they will no longer be open, but still measurable). The preimage $f^{-1}(V_i)$ has the same number of components for each $i$, say $f^{-1}(V_i) = U_{i1}\cup\cdots\cup U_{ik}$, where the $U_{ij}$ are pairwisely disjoint. Then the volumes of the sets $V_i$ sum up to $m(M)=1$, and the same is true for the volumes of the sets $U_{ij}$, $1\leq i\leq n$, $1\leq j\leq k$. Moreover,%
\begin{equation*}
  m(V_i) = m(f(U_{ij})) \leq \Gamma^{\dim M} m(U_{ij}).%
\end{equation*}
Altogether, we obtain the estimate%
\begin{equation*}
  1 = \sum_{i,j}m(U_{ij}) \geq \sum_{i,j}\frac{1}{\Gamma^{\dim M}} m(V_i) = \frac{k}{\Gamma^{\dim M}}.%
\end{equation*}
Hence, we see that $k = \# f^{-1}(x) \leq \Gamma^{\dim M}$. Since each $f_i^r$ is the composition of at most $N$ elements of $\EC(\lambda,\Gamma)$, we have $\# f_i^{-r}(x) \leq \Gamma^{N \dim M}$. The assertion of the lemma now follows from the estimate%
\begin{equation*}
  \left|(\PC_{f_i^r}\varphi)(x)\right| \leq \sum_{y \in f_i^{-r}(x)}\frac{|\varphi(y)|}{|\det \rmD f_i^r(y)|} \leq \frac{\Gamma^{N \dim M}}{\lambda^{\dim M}} \sup_{y\in M} |\varphi(y)|.%
\end{equation*}
\end{proof}

\begin{theorem}
Consider an NDS $(M,f_{\infty})$ with $f_n \in \EC(\lambda,\Gamma)$. Then for any two initial densities $\varphi,\psi\in\DC$ and any sequence $\PC_{\infty}$ of Borel partitions of $M$ with uniformly bounded number of elements such that the volumes of the elements of each $\PC_n$ are sufficiently small (uniformly in $n$) it holds that%
\begin{equation*}
  h(f_{\infty};\PC_{\infty};\mu_{\infty}) = h(f_{\infty};\PC_{\infty};\nu_{\infty}),%
\end{equation*}
where $\mu_n = f_0^n(\varphi\rmd m)$ and $\nu_n = f_0^n(\psi\rmd m)$. Consequently,%
\begin{equation*}
  h_{\EC_M}(f_{\infty};\mu_{\infty}) = h_{\EC_M}(f_{\infty};\nu_{\infty}).%
\end{equation*}
\end{theorem}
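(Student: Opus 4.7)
The plan is to combine the shift invariance of the partition entropy (Proposition \ref{prop_kkp1}) with the exponential loss-of-memory result of Ott--Stenlund--Young, reducing the comparison of $h(f_{\infty};\PC_{\infty};\mu_{\infty})$ and $h(f_{\infty};\PC_{\infty};\nu_{\infty})$ to a Fannes-type entropy estimate between probability measures that become arbitrarily close in total variation.

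First I iterate Proposition \ref{prop_kkp1} to obtain, for every $n\geq 0$,
\begin{equation*}
  h(f_{\infty};\PC_{\infty};\mu_{\infty}) = h(f_{n,\infty};\PC_{n,\infty};\mu_{n,\infty}),
\end{equation*}
and the analogous identity for $\nu$. Choosing $L$ so that $\varphi,\psi\in\DC_L$ and applying Proposition \ref{prop_osy}, for every $n\geq\tau(L)$ the densities $\varphi_n := \PC_{f_0^n}(\varphi)$ and $\psi_n := \PC_{f_0^n}(\psi)$ both lie in $\DC_{L^*}$. The OSY loss-of-memory theorem, which applies precisely because both densities sit in the fixed class $\DC_{L^*}$ and the maps $f_k$ satisfy the uniform bounds of $\EC(\lambda,\Gamma)$, then provides constants $C>0$ and $\theta\in(0,1)$ with
\begin{equation*}
  \|\mu_n-\nu_n\|_{\mathrm{TV}} = \tfrac12\,\|\varphi_n-\psi_n\|_{L^1} \leq C\theta^n \longrightarrow 0.
\end{equation*}

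Next I invoke a classical Fannes-type inequality: for any finite partition $\QC$ of $M$ with $d$ atoms and any two Borel probability measures on $M$,
\begin{equation*}
  |H_{\mu}(\QC) - H_{\nu}(\QC)| \leq \|\mu-\nu\|_{\mathrm{TV}}\log d + \log 2.
\end{equation*}
Applied to $\QC := \bigvee_{i=0}^{N-1}f_n^{-i}\PC_{n+i}$, whose cardinality is at most $k^N$ where $k := \sup_n \#\PC_n<\infty$, this yields after dividing by $N$ and taking the limsup
\begin{equation*}
  \bigl|h(f_{n,\infty};\PC_{n,\infty};\mu_{n,\infty}) - h(f_{n,\infty};\PC_{n,\infty};\nu_{n,\infty})\bigr| \leq \|\mu_n-\nu_n\|_{\mathrm{TV}}\log k.
\end{equation*}
By the shift invariance obtained above, the left-hand side equals $|h(f_{\infty};\PC_{\infty};\mu_{\infty}) - h(f_{\infty};\PC_{\infty};\nu_{\infty})|$ for every $n\geq\tau(L)$, so sending $n\to\infty$ forces it to vanish.

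For the ``consequently'' statement, I use that the power refinement $\QC^{\langle k\rangle}_{\infty}$ of any $\QC_{\infty}\in\EC_M$ is again in $\EC_M$ by property (iii) of admissible classes, while the uniform expansion $\lambda>1$ forces the atoms of $\QC^{\langle k\rangle}_n$ to have diameter at most $\lambda^{-(k-1)}\diam M$ and hence arbitrarily small volume, uniformly in $n$, for $k$ large. Monotonicity of partition entropy under refinement, combined with the partition-wise equality just proved applied to $\QC^{\langle k\rangle}_{\infty}$, then gives $h_{\EC_M}(f_{\infty};\mu_{\infty})=h_{\EC_M}(f_{\infty};\nu_{\infty})$ upon passing to the supremum. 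The main obstacle I anticipate is the careful invocation of OSY's loss-of-memory result at the regularity $\DC_{L^*}$ with uniform constants $C,\theta$ independent of the shift parameter $n$; the Fannes estimate and the refinement step are then routine.
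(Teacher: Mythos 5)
Your argument for the main equality is correct, but it takes a genuinely different route from the paper. The paper localizes the Ott--Stenlund--Young estimate to arbitrary Borel sets, proving $\int_A|\varphi_n-\psi_n|\,\rmd m\leq m(A)K\mu^n$, and then compares $H_{\mu_n}$ and $H_{\nu_n}$ atom by atom via the mean value theorem applied to $\eta(x)=x\log x$; this requires both the uniform lower bound $\kappa_*$ on the densities and the hypothesis that the atoms have small volume (to control the sign of $1+\log\xi_A$), and it produces the bound $c_n+K\mu^nH_m(\AC)$. You instead use only the global $L^1$ loss-of-memory estimate together with a Fannes-type continuity bound for the entropy of a $d$-atom partition, which after dividing by $N$ yields $\|\mu_n-\nu_n\|_{\mathrm{TV}}\log k$ directly. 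Your route is more elementary and, notably, does not use the small-volume hypothesis at all; combined with the shift invariance from Proposition \ref{prop_kkp1} it closes the argument just as the paper's does.

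There is, however, a genuine error in your treatment of the ``consequently'' statement: the claim that uniform expansion forces the atoms of $\QC^{\langle k\rangle}_n$ to have diameter at most $\lambda^{-(k-1)}\diam M$ is false. An atom of $\bigvee_{i=0}^{k-1}f_n^{-i}\QC_{n+i}$ is an intersection of full preimages, and a full preimage $f_n^{-i}(Q)$ is a union of many contracted branches scattered over $M$; the intersection need not be connected and need not be small. (Take $\QC_n\equiv\{M\}$: then $\QC^{\langle k\rangle}_n=\{M\}$ for all $k$.) The repair is easy and comes in two flavors. Either observe that your Fannes-based proof of the partition-wise equality never used the smallness of the atoms, so the equality holds for every $\PC_\infty\in\EC_{\Mis}$ and the suprema coincide immediately; or, if one insists on reducing to small-volume partitions, join each $\PC_n$ with a fixed finite partition $\RC$ of $M$ whose elements have small volume and boundaries of volume zero --- the joined sequence refines $\PC_\infty$, stays in $\EC_{\Mis}$, and satisfies the hypothesis of the first part, so monotonicity under refinement gives the equality of the suprema. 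Either fix is routine, but as written the deduction does not stand.
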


\begin{proof}
We prove the theorem in three steps.%

\emph{Step 1.} Let $\varphi_n = \PC_{f_0^n}(\varphi)$ and $\psi_n = \PC_{f_0^n}(\psi)$ for all $n\geq0$. We prove that for every Borel set $A\subset M$ we have an exponential estimate%
\begin{equation}\label{eq_expest}
  \int_A |\varphi_n - \psi_n| \rmd m \leq m(A) \cdot K \mu^n \mbox{\qquad for all\ } n\geq0%
\end{equation}
with constants $K>0$ and $\mu \in (0,1)$, which are independent of $A$. Except for one point, the proof is the same as the one for $A=M$ (cf.~\cite[Thm.~1]{OSY}). In view of Proposition \ref{prop_osy}, we may assume $\varphi,\psi \in \DC_{L^*}$ for some $L^*>0$. There exists a uniform lower bound $\kappa>0$ for all functions in $\DC_{L^*}$ (cf.~the proof of Proposition \ref{prop_lmc} in \cite[Ex.~36]{Kaw}). Let $N = \tau(2L^*)$ be given by Proposition \ref{prop_osy}, and put%
\begin{equation}\label{eq_renorm}
  \hat{\varphi} := \frac{\varphi - \frac{1}{2}\kappa}{1-\frac{1}{2}\kappa\cdot m(M)},\qquad \hat{\psi} := \frac{\psi - \frac{1}{2}\kappa}{1-\frac{1}{2}\kappa\cdot m(M)}.%
\end{equation}
Then $\bar{\varphi}_N := \PC_{f_0^N}(\hat{\varphi})$ and $\bar{\psi}_N := \PC_{f_0^N}(\hat{\psi})$ are in $\DC_{L^*}$. We subtract $\frac{1}{2}\kappa$ from each of $\bar{\varphi}_N$ and $\bar{\psi}_N$ and renormalize as in \eqref{eq_renorm}, obtaining $\hat{\varphi}_N$ and $\hat{\psi}_N$, respectively. By Lemma \ref{lem_renorm}, they are in $\DC_{2L^*}$. In general, given $\hat{\varphi}_{(k-1)N},\hat{\psi}_{(k-1)N}\in\DC_{2L^*}$, we let%
\begin{equation*}
  \bar{\varphi}_{kN} := \PC_{f_{(k-1)N}^N}(\hat{\varphi}_{(k-1)N}),\qquad \bar{\psi}_{kN} := \PC_{f_{(k-1)N}^N}(\hat{\psi}_{(k-1)N}).%
\end{equation*}
By Proposition \ref{prop_osy}, $\bar{\varphi}_{kN},\bar{\psi}_{kN}\in\DC_{L^*}$. We subtract $\frac{1}{2}\kappa$ and renormalize to obtain $\hat{\varphi}_{kN},\hat{\psi}_{kN}$ in $\DC_{2L^*}$, completing the induction.%

By this process, for $kN \leq n < (k+1)N$ and Lemma \ref{lem_suplem} we obtain%
\begin{eqnarray*}
&& \int_A \left|\varphi_n - \psi_n\right| \rmd m = \int_A \left|\PC_{f_0^n}(\varphi) - \PC_{f_0^n}(\psi)\right|\rmd m\allowdisplaybreaks\\
  && \leq \left(1 - \frac{1}{2}\kappa\cdot m(M)\right)^k \int_A \left|\PC_{f_{kN}^{n-kN}}(\bar{\varphi}_{kN}) - \PC_{f_{kN}^{n-kN}}(\bar{\psi}_{kN})\right|\rmd m\allowdisplaybreaks\\
  && \leq \left(1 - \frac{1}{2}\kappa\cdot m(M)\right)^k m(A) \sup_{x\in M} \left|\PC_{f_{kN}^{n-kN}}\left(\bar{\varphi}_{kN}-\bar{\psi}_{kN}\right)(x)\right|\allowdisplaybreaks\\
  && \leq \left(1 - \frac{1}{2}\kappa\cdot m(M)\right)^k m(A) C \cdot \sup_{x\in M} \left|\left(\bar{\varphi}_{kN}-\bar{\psi}_{kN}\right)(x)\right|\allowdisplaybreaks\\
  && \leq \left(1 - \frac{1}{2}\kappa\cdot m(M)\right)^k m(A) C \cdot \left(\sup_{x\in M} \left|\bar{\varphi}_{kN}(x)\right| + \sup_{x\in M} \left|\bar{\psi}_{kN}(x)\right|\right)\allowdisplaybreaks\\
  && \leq \left(1 - \frac{1}{2}\kappa\cdot m(M)\right)^k m(A) C \cdot 2 \sup_{\alpha\in\DC_{L^*}} \|\alpha\|_{\CC^0}.%
\end{eqnarray*}
Since the functions in $\DC_{L^*}$ are uniformly bounded, $\sup_{\alpha\in\DC_{L^*}}\|\alpha\|_{\CC^0} < \infty$. This easily implies the desired estimate \eqref{eq_expest}.%

\emph{Step 2.} Let $\AC$ be a finite measurable partition of $M$ and $\eta(x) = x\log(x)$, defined on $[0,1]$ with $\eta(0)=0$. Then, using the mean value theorem, we get%
\begin{eqnarray*}
  && \left|\sum_{A\in\AC}\mu_n(A)\log\mu_n(A) - \sum_{A\in\AC}\nu_n(A)\log\nu_n(A)\right| \leq \sum_{A\in\AC}\left|\eta(\mu_n(A))-\eta(\nu_n(A))\right|\\
&& = \sum_{A\in\AC}|1 + \log(\xi_A)||\mu_n(A)-\nu_n(A)|
 \stackrel{\eqref{eq_expest}}{\leq} K\mu^n \sum_{A\in\AC}|1 + \log(\xi_A)| m(A)%
\end{eqnarray*}
for some $\xi_A$ between $\mu_n(A)$ and $\nu_n(A)$. In the case $\mu_n(A) = \nu_n(A) = 0$, the corresponding summand is zero by convention. If the volumes of the sets in $\AC$ are sufficiently small, we have $|1 + \log(\xi_A)| = -1-\log(\xi_A)$. We also have $\kappa_* \leq \varphi_n,\psi_n$ for a constant $\kappa_* > 0$ and all sufficiently large $n$ (cf.~the proof of Proposition \ref{prop_lmc} in \cite[Ex.~36]{Kaw}). Then $\mu_n(A),\nu_n(A) \geq m(A)\kappa_*$, and hence $\xi_A \geq m(A)\kappa_*$, implying $-\log(\xi_A) \leq -\log(\kappa_* m(A))$. We thus obtain%
\begin{eqnarray*}
&&  \left|\sum_{A\in\AC}\mu_n(A)\log\mu_n(A) - \sum_{A\in\AC}\nu_n(A)\log\nu_n(A)\right|\allowdisplaybreaks\\
&& \leq -K\mu^n\sum_{A\in\AC}(1+\log(\kappa_*) + \log m(A)) m(A)\allowdisplaybreaks\\
&& = -K\mu^n \left[1+\log(\kappa_*) + \sum_{A\in\AC}m(A)\log m(A)\right] = c_n + K\mu^nH_m(\AC)%
\end{eqnarray*}
with $c_n = -K\mu^n(1+\log(\kappa_*))$.%

\emph{Step 3.} Now let $\PC_{\infty}$ be a sequence of partitions of $M$ such that the volumes of the sets in each $\PC_n$ are smaller than $\rme^{-1}$ so that the conclusions of Step 2 hold. Using Proposition \ref{prop_kkp1} and the general estimate $|\limsup_t a_t  - \limsup_t b_t| \leq \limsup_t |a_t - b_t|$, we obtain%
\begin{eqnarray*}
  && \left| h(f_{0,\infty};\PC_{0,\infty};\mu_{0,\infty}) - h(f_{0,\infty};\PC_{0,\infty};\nu_{0,\infty}) \right|\\
  && = \left| h(f_{n,\infty};\PC_{n,\infty};\mu_{n,\infty}) - h(f_{n,\infty};\PC_{n,\infty};\nu_{n,\infty}) \right|\\
  && \leq \limsup_{k\rightarrow\infty}\frac{1}{k} \left|H_{\mu_n}\left( \bigvee_{i=0}^{k-1} f_n^{-i}\PC_{n+i} \right) - H_{\nu_n}\left( \bigvee_{i=0}^{k-1} f_n^{-i}\PC_{n+i} \right)\right|\\
  && \stackrel{\mathrm{Step\ 2}}{\leq} \limsup_{k\rightarrow\infty}\frac{1}{k}\left(c_n + K\mu^nH_m\left(\bigvee_{i=0}^{k-1} f_n^{-i}\PC_{n+i}\right)\right)\\
  && = K\mu^nh\left(f_{n,\infty};\PC_{n,\infty};m_{\infty}\right)\\
  && \leq K\mu^n \log \sup_{i\geq0}\#\PC_i,%
\end{eqnarray*}
where $m_{\infty}$ denotes the sequence $m,f_0^1 m,f_0^2 m,\ldots$. Sending $n$ to infinity yields the result.%
\end{proof}

\section{Entropy formulas}\label{sec_entforms}%

In this section, we derive formulas for the metric entropy of an expanding NDS w.r.t.~the invariant sequence $m_{\infty} = (m,f_0^1m,f_0^2m,\ldots)$ and for the topological entropy. The key for the proof is the following distortion lemma (\cite[Lem.~2.6]{OSY}).%

\begin{lemma}\label{lem_dist}
For every sufficiently small $\ep$ there exists $C_0>0$ such that%
\begin{equation*}
  \frac{|\det \rmD f_0^n(x)|}{|\det \rmD f_0^n(y)|} \leq \rme^{C_0 d(f_0^n(x),f_0^n(y))},%
\end{equation*}
if $x,y\in M$ and $n\geq0$ such that $d(f_0^k(x),f_0^k(y)) < \ep$ for $k=0,1,\ldots,n-1$.%
\end{lemma}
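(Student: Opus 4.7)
The plan is to take the logarithm of the ratio and rewrite it, via the chain rule, as the telescoping sum
\begin{equation*}
  \log\frac{|\det \rmD f_0^n(x)|}{|\det \rmD f_0^n(y)|} = \sum_{k=0}^{n-1}\left(\log|\det \rmD f_k(f_0^k(x))| - \log|\det \rmD f_k(f_0^k(y))|\right),
\end{equation*}
and then to control each summand by combining a Lipschitz bound on $\log|\det \rmD f_k|$ with an exponential backward contraction of distances along the orbit.

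For the first ingredient, I would observe that the $\CC^2$-bound $\|f_k\|_{\CC^2}\leq \Gamma$ makes $z\mapsto |\det \rmD f_k(z)|$ Lipschitz on $M$ with a constant depending only on $\Gamma$ and $\dim M$, while the expanding property forces $|\det \rmD f_k(z)|\geq \lambda^{\dim M}>0$. Together these produce a constant $L_0=L_0(\lambda,\Gamma,\dim M)$, independent of $k$, such that $\log|\det \rmD f_k|$ is $L_0$-Lipschitz on $M$.

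For the second ingredient, I would choose $\ep>0$ so small that, for every $f\in\EC(\lambda,\Gamma)$ and every $z\in M$, the local inverse branch of $f$ sending $f(z)$ back to $z$ is defined and $\lambda^{-1}$-Lipschitz on the ball $B_{\Gamma\ep}(f(z))$. Such a uniform $\ep$ exists because the $\CC^2$-norms are bounded by $\Gamma$ and $|\rmD f|$ is bounded below by $\lambda$. Under the standing hypothesis $d(f_0^k(x),f_0^k(y))<\ep$ for $k=0,\ldots,n-1$, the point $f_0^{k+1}(y)=f_k(f_0^k(y))$ lies within $\Gamma\ep$ of $f_0^{k+1}(x)$, hence in the domain of the local inverse branch of $f_k$ at $f_0^{k+1}(x)$, which sends it precisely to $f_0^k(y)$. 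Applying the $\lambda^{-1}$-Lipschitz bound for that branch gives
\begin{equation*}
  d(f_0^k(x),f_0^k(y))\leq \lambda^{-1} d(f_0^{k+1}(x),f_0^{k+1}(y)),\qquad k=0,\ldots,n-1,
\end{equation*}
and iteration yields $d(f_0^k(x),f_0^k(y))\leq \lambda^{k-n}d(f_0^n(x),f_0^n(y))$.

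Combining the two ingredients and summing the geometric series produces
\begin{equation*}
  \left|\log\frac{|\det \rmD f_0^n(x)|}{|\det \rmD f_0^n(y)|}\right|\leq L_0\sum_{k=0}^{n-1}\lambda^{k-n}d(f_0^n(x),f_0^n(y)) \leq \frac{L_0}{\lambda-1}d(f_0^n(x),f_0^n(y)),
\end{equation*}
so exponentiating yields the claim with $C_0:=L_0/(\lambda-1)$. The main obstacle is making the choice of $\ep$ uniform in $k$ and $n$: one must show that the size of evenly-covered neighborhoods and the Lipschitz constants of local inverse branches can be controlled solely by the structural constants $\lambda,\Gamma$ and the geometry of $M$. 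This uniformity is what the compactness of $M$ together with the $\CC^2$-bound built into $\EC(\lambda,\Gamma)$ provides, and it is the feature that makes the distortion estimate hold on the whole positive time axis rather than only on bounded time intervals.
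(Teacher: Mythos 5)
Your argument is correct and is essentially the standard distortion estimate: telescoping $\log|\det\rmD f_0^n|$ via the chain rule, using the uniform $\CC^2$-bound and the lower bound $\lambda^{\dim M}$ on the Jacobian to make $\log|\det\rmD f_k|$ uniformly Lipschitz, and then summing the geometric series coming from the backward contraction of inverse branches. The paper does not prove this lemma itself but imports it from Ott--Stenlund--Young \cite[Lem.~2.6]{OSY}, whose proof follows the same route; the only point to state with a little more care is the one you flag yourself, namely that for $\ep$ small (depending on the uniform radius $\rho$ of evenly covered balls from Lemma \ref{lem2} and on $\Gamma$) the inverse branch of $f_k$ at $f_0^{k+1}(x)$ really returns $f_0^{k+1}(y)$ to $f_0^k(y)$ rather than to another preimage, which follows since distinct preimages are uniformly separated while $d(f_0^k(x),f_0^k(y))<\ep$.
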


\begin{lemma}\label{lem2}
There exists $\rho>0$ such that every $\rho$-ball in $M$ is evenly covered by each $f \in \EC(\lambda,\Gamma)$ and every branch of the inverse map is a contraction.%
\end{lemma}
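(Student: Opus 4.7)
The plan is to show that every $f \in \EC(\lambda,\Gamma)$ is a local diffeomorphism on a uniform scale $\rho_0 > 0$, and then deduce even coverage on a smaller scale $\rho$ via standard covering-space theory. Because $|\rmD f(v)| \geq \lambda|v|$, the derivative $\rmD f$ is everywhere invertible, so $f$ is a local diffeomorphism. Since $M$ is compact and connected, $f$ is then a finite covering map (this is already implicit in the proof of Lemma \ref{lem_suplem}).

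The key quantitative step is to find $\rho_0 > 0$ depending only on $\lambda$, $\Gamma$, and the geometry of $M$ such that, for \emph{every} $f \in \EC(\lambda,\Gamma)$ and every $y \in M$, the restriction $f|_{B_{\rho_0}(y)}$ is a diffeomorphism onto an open set containing $B_{\lambda \rho_0/2}(f(y))$, with inverse of Lipschitz constant at most $2/\lambda$. Since $\|f\|_{\CC^2}\leq \Gamma$, the map $\rmD f$ is Lipschitz with a uniform constant $K$ depending only on $\Gamma$ and the curvature of $M$. Working in normal coordinates at $y$ and $f(y)$, one gets $|\rmD f(z)(v)| \geq (\lambda/2)|v|$ whenever $d(z,y) < \rho_0$ and $\rho_0 \leq \lambda/(2K)$; a standard mean-value/inverse-function argument then gives injectivity on $B_{\rho_0}(y)$, surjectivity onto $B_{\lambda \rho_0/2}(f(y))$, and the bound on the inverse Lipschitz constant.

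From this I would first derive uniform separation of preimages: if $y_1 \neq y_2$ lay in $f^{-1}(x)$ with $d(y_1,y_2) < \rho_0$, injectivity of $f$ on $B_{\rho_0}(y_1)$ would force $f(y_2)\neq f(y_1)$, a contradiction. Now choose $\rho \in (0,\lambda\rho_0/4)$, also less than the injectivity radius of $M$, so that $B_\rho(x)$ is geodesically convex (hence simply connected) for every $x$. For each $y_i \in f^{-1}(x)$, the local inverse furnished above restricts to a contraction $g_i : B_\rho(x) \to B_{\rho_0/2}(y_i)$ with Lipschitz constant $\leq 2/\lambda < 1$, and the sets $g_i(B_\rho(x))$ are pairwise disjoint because the $y_i$ are $\rho_0$-separated. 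The identity $f^{-1}(B_\rho(x)) = \bigsqcup_i g_i(B_\rho(x))$ then follows from covering-space theory: since $B_\rho(x)$ is simply connected and $f$ is a covering map, every connected component of $f^{-1}(B_\rho(x))$ is mapped homeomorphically onto $B_\rho(x)$, and the component containing $y_i$ must coincide with $g_i(B_\rho(x))$.

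I expect the principal obstacle to lie in Step~2, namely making $\rho_0$ genuinely uniform over $\EC(\lambda,\Gamma)$. This is precisely where the $\CC^2$-bound is essential: without a uniform modulus of continuity on $\rmD f$, the scale on which $f$ remains locally injective could shrink as $f$ varies in the family. Once such a $\rho_0$ is in hand, the contraction property of the inverse branches is automatic from $|\rmD f|\geq\lambda>1$, and the rest of the argument is purely topological.
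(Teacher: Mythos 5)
Your proposal is correct and follows essentially the same route as the paper: a uniform local-diffeomorphism radius coming from the $\CC^2$-bound together with the inverse function theorem, the inclusion of a ball around $f(x)$ inside the image of a ball around $x$, uniform separation of preimages, and pairwise disjoint contracting inverse branches (the paper obtains the ball inclusion by lifting shortest geodesics and using $|\rmD f|\geq\lambda$, and assembles the evenly covered neighborhood explicitly as $\bigcap_i f(B_{\rho}(x_i))$ rather than citing covering-space theory for simply connected balls, but these differences are cosmetic). One slip to fix: the asserted inverse Lipschitz bound ``$\leq 2/\lambda<1$'' is false for $1<\lambda\leq 2$; the correct intrinsic bound $1/\lambda<1$ --- which you also invoke at the end, and which is exactly the paper's geodesic-lifting estimate --- is what the contraction claim requires, and nothing else in your argument actually depends on having $2/\lambda<1$.
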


\begin{proof}
We claim that there exists $\delta>0$ such that each $f$ is a diffeomorphism on every ball of radius $\delta$. It is clear that each $f$ is locally injective (by the expansion property and the inverse function theorem). The existence of a uniform radius of injectivity easily follows from the proof of the inverse function theorem, the uniform expansion constant and the uniform bound $\Gamma$ on the second derivative of $f$ (cf., for instance, \cite[Thm.~1.2]{Lan}). Now consider any $f\in\EC(\lambda,\Gamma)$, $y\in M$ and $x\in f^{-1}(y)$. If $z \in B_{\delta}(y)$, then there exists a shortest geodesic $\gamma:[0,1]\rightarrow M$ from $y$ to $z$ with $\gamma([0,1]) \subset B_{\delta}(y)$. Since $f$ is a covering map, there exists a lift $\tilde{\gamma}:[0,1]\rightarrow M$ with $\tilde{\gamma}(0) = x$, i.e., $f \circ \tilde{\gamma} = \gamma$. We find%
\begin{equation*}
  \delta > d(y,z) = \LC(\gamma) = \int_0^1 |\rmD f(\tilde{\gamma}(s))\dot{\tilde{\gamma}}(s)|\rmd s \geq \lambda \int_0^1 |\dot{\tilde{\gamma}}(s)|\rmd s = \lambda \LC(\tilde{\gamma}),%
\end{equation*}
implying $d(x,\tilde{\gamma}(1)) \leq \LC(\tilde{\gamma}) < \delta/\lambda < \delta$. In particular, $z = \gamma(1) = f(\tilde{\gamma}(1)) \in f(B_{\delta}(x))$. This implies%
\begin{equation*}
  B_{\delta}(f(x)) \subset f(B_{\delta}(x)) \mbox{\quad for all\ } f\in\EC(\lambda,\Gamma),\ x \in M.%
\end{equation*}
Of course, the same inclusion holds for any radius smaller than $\delta$. Let $\rho := \delta/2$, $y\in M$, $f\in\EC(\lambda,\Gamma)$ and write $f^{-1}(y) = \{x_1,\ldots,x_k\}$. For $i\neq j$, $x_i$ and $x_j$ have at least distance $\delta$ to each other, and hence $B_{\rho}(x_i) \cap B_{\rho}(x_j) = \emptyset$. Put $W := \bigcap_{i=1}^k f(B_{\rho}(x_i))$. Then $W$ is an open neighborhood of $y$ that contains $B_{\rho}(y)$. This easily implies that $B_{\rho}(y)$ is evenly covered. It is clear that the branches of the inverse are contractions.%
\end{proof}

To compute the entropy, we need a simple version of the Bowen-Ruelle volume lemma.%

\begin{proposition}\label{prop_volumelemma}
Let $(M,f_{\infty})$ be an NDS with $f_n \in \EC(\lambda,\Gamma)$. Then there exist $\ep>0$ and $C = C(\ep)>0$, $D = D(\ep)>0$ such that for all $n\geq0$ and $x\in M$,%
\begin{equation*}
  D \left|\det \rmD f_0^n(x)\right|^{-1} \leq m(B^n_0(x,\ep)) \leq C \left|\det \rmD f_0^n(x)\right|^{-1}.%
\end{equation*}
\end{proposition}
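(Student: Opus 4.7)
The plan is to identify $B_0^n(x,\ep)$ (for $\ep$ small) with a single diffeomorphic preimage of the ordinary ball $B_\ep(f_0^n(x))$ under $f_0^n$, and then apply the change of variables formula together with the distortion estimate of Lemma \ref{lem_dist}. I would take $\ep>0$ below three thresholds, all uniform in $n$: the one required by Lemma \ref{lem_dist}, the even-covering radius $\rho$ of Lemma \ref{lem2}, and the local injectivity radius $\delta$ extracted from the proof of Lemma \ref{lem2} (so that each $f\in\EC(\lambda,\Gamma)$ is a diffeomorphism on every $\ep$-ball of $M$).

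First I would build the candidate preimage: set $V_n := B_\ep(f_0^n(x))$, and for $k=n-1,\ldots,0$ let $V_k$ be the connected component of $f_k^{-1}(V_{k+1})$ containing $f_0^k(x)$. By the even covering and the contraction of inverse branches (Lemma \ref{lem2}), $f_0^n|_{V_0}$ is a diffeomorphism onto $B_\ep(f_0^n(x))$, and iteratively $V_k\subseteq B_{\ep/\lambda^{n-k}}(f_0^k(x))\subseteq B_\ep(f_0^k(x))$; hence $V_0\subseteq B_0^n(x,\ep)$. For the converse inclusion, take $y\in B_0^n(x,\ep)$ and argue by downward induction on $k$: both $f_0^k(y)$ and the unique lift of $f_0^{k+1}(y)\in V_{k+1}$ through the chosen branch at $f_0^k(x)$ lie in $B_\ep(f_0^k(x))$, and local injectivity of $f_k$ on this ball forces them to coincide, putting $f_0^k(y)\in V_k$. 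Thus $B_0^n(x,\ep)=V_0$ and $f_0^n$ maps it diffeomorphically onto $B_\ep(f_0^n(x))$.

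With the diffeomorphism in hand, the change of variables formula yields
\begin{equation*}
  m(B_\ep(f_0^n(x))) = \int_{B_0^n(x,\ep)} |\det\rmD f_0^n(y)|\,\rmd m(y).
\end{equation*}
Lemma \ref{lem_dist}, applied with the roles of $x$ and $y$ interchanged when needed, gives $\rme^{-C_0\ep}|\det\rmD f_0^n(x)| \leq |\det\rmD f_0^n(y)| \leq \rme^{C_0\ep}|\det\rmD f_0^n(x)|$ throughout $B_0^n(x,\ep)$, so the integral is bracketed by $\rme^{\pm C_0\ep}|\det\rmD f_0^n(x)|\cdot m(B_0^n(x,\ep))$. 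Compactness of $M$ provides constants $0<D'\leq C'$ with $D'\leq m(B_\ep(z))\leq C'$ for all $z\in M$; rearranging gives the claimed two-sided bound with $D=\rme^{-C_0\ep}D'$ and $C=\rme^{C_0\ep}C'$.

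The main obstacle I anticipate is the reverse inclusion $B_0^n(x,\ep)\subseteq V_0$. Uniform local injectivity across the entire family $\EC(\lambda,\Gamma)$ is essential here, since a preimage of $f_0^{k+1}(y)$ sitting in $B_\ep(f_0^k(x))$ but outside the chosen branch would break the induction; this is why $\ep$ must be taken below the injectivity radius extracted from Lemma \ref{lem2}, and not merely below the even-covering radius. Once this step is secured, the remaining calculations are routine.
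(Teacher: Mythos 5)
Your proposal is correct and takes essentially the same route as the paper: both inclusions between the Bowen-ball and the inverse-branch preimage of $B_{\ep}(f_0^n(x))$ are established via Lemma \ref{lem2} (contraction of inverse branches for one direction, uniform local injectivity/separation of preimages for the other), and the volume estimate then follows from the change of variables formula combined with the distortion bound of Lemma \ref{lem_dist}. The only cosmetic difference is that you establish the exact equality $B_0^n(x,\ep)=V_0$ up front and extract both bounds from a single change of variables, whereas the paper uses one inclusion per inequality.
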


\begin{proof}
For the proof of the first inequality we use that for small $\ep$ it holds that%
\begin{equation}\label{eq_vh}
  (f_0^n)_x^{-1}B_{\ep}(f_0^n(x)) \subset B^n_0(x,\ep),%
\end{equation}
where $(f_0^n)_x$ denotes the restriction of $f_0^n$ to a small neighborhood of $x$ on which $f_0^n$ is injective. In fact, it follows from Lemma \ref{lem2} that there is $\ep$ small enough so that every $\ep$-ball is evenly covered by all $f_0^n$, $n\geq1$. For such $\ep$, $(f_0^n)_x^{-1}$ is defined on $B_{\ep}(f_0^n(x))$. The map $(f_0^n)_x^{-1}$ can be decomposed as%
\begin{equation*}
  (f_0^n)_x^{-1} = (f_0)_x^{-1} \circ \cdots \circ (f_{n-1})_{f_0^{n-1}(x)}^{-1}.%
\end{equation*}
Assume $y \in (f_0^n)_x^{-1} B_{\ep}(f_0^n(x))$, $y = (f_0^n)_x^{-1}(z)$, and let $i \in \{0,1,\ldots,n\}$. Then%
\begin{eqnarray*}
  d\left(f_0^i(y),f_0^i(x)\right) &=& d\left(f_0^i( (f_0)_x^{-1} \circ \cdots \circ (f_{n-1})_{f_0^{n-1}(x)}^{-1}(z)),f_0^i(x)\right)\\
                                  &=& d\left((f_i)_{f_0^i(x)}^{-1} \circ \cdots \circ (f_{n-1})_{f_0^{n-1}(x)}^{-1}(z),f_0^i(x)\right).%
\end{eqnarray*}
We can write%
\begin{equation*}
  f_0^i(x) = (f_i)_{f_0^i(x)}^{-1} \circ \cdots \circ (f_{n-1})_{f_0^{n-1}(x)}^{-1}(f_0^n(x)).%
\end{equation*}
Since each $(f_j)_{f_0^j(x)}^{-1}$ is a contraction, this yields%
\begin{equation*}
  d\left(f_0^i(y),f_0^i(x)\right) \leq d(z,f_0^n(x)) < \ep \mbox{\quad for\ } i=0,1,\ldots,n.%
\end{equation*}
Hence, we have $y \in B^n_0(x,\ep)$, implying \eqref{eq_vh}. We thus obtain%
\begin{eqnarray*}
  m(B^n_0(x,\ep)) &\geq& m\left((f_0^n)_x^{-1}B_{\ep}(f_0^n(x))\right) = \int_{B_{\ep}(f_0^n(x))} \left|\det \rmD (f_0^n)_x^{-1}(u)\right| \rmd m(u)\\
  &=& \int_{B_{\ep}(f_0^n(x))} \left|\det \rmD f_0^n( (f_0^n)_x^{-1}(u))\right|^{-1} \rmd m(u).%
\end{eqnarray*}
By Lemma \ref{lem_dist}, we have%
\begin{equation*}
  \left|\det \rmD f_0^n((f_0^n)_x^{-1}(y))\right| \leq \rme^{C_0 d(y,f_0^n(x))} \left|\det \rmD f_0^n(x)\right|,%
\end{equation*}
implying%
\begin{eqnarray*}
   m(B^n_0(x,\ep)) &\geq& \int_{B_{\ep}(f_0^n(x))} \rme^{-C_0 d(y,f_0^n(x))}\rmd m(y) \cdot \left|\det \rmD f_0^n(x)\right|^{-1}\\
                   &\geq& m\left(B_{\ep}(f_0^n(x))\right) \cdot \rme^{-C_0 \ep} \left|\det \rmD f_0^n(x)\right|^{-1}\\
                   &\geq& \min_{z\in M} m(B_{\ep}(z)) \cdot \rme^{-C_0\ep} \left|\det \rmD f_0^n(x)\right|^{-1}.%
\end{eqnarray*}
This concludes the proof of the first inequality.%

To prove the converse inequality, note that there is $\delta>0$ such that two different preimages of a point $x\in M$ are at least $\delta$ apart under each of the maps $f_n$, because the maps have a common radius of injectivity as shown in the proof of Lemma \ref{lem2}. Now choose $\ep \in (0,\delta/4)$ such that every $\ep$-ball in $M$ is evenly covered by each of the maps $f_n$. Then any Bowen-ball $B^n_0(x,\ep)$ is contained in precisely one leaf over $B_{\ep}(f_0^n(x))$ under the covering map $f_0^n$. Indeed, if $y_1,y_2\in B^n_0(x,\ep)$, then $d(y_1,y_2) \leq d(y_1,x) + d(x,y_2) < \delta/2$. The leaves over $B_{\ep}(f_0(x))$ are contained in the $\delta/4$-balls around the preimages of $f_0(x)$. If $f_0(\tilde{x}) = f_0(x)$ and $y_1 \in B_{\delta/4}(x)$, $y_2 \in B_{\delta/4}(\tilde{x})$, then $d(x,\tilde{x}) \leq d(x,y_1) + d(y_1,y_2) + d(y_2,\tilde{x}) < \delta/4 + \delta/2 + \delta/4 = \delta$, implying $x = \tilde{x}$. Inductively, we find that $f_0^i(y_1)$ and $f_0^i(y_2)$ are in the same leaf over $B_{\ep}(f_0^{i+1}(x))$ for $0\leq i\leq n-1$, and hence%
\begin{equation*}
  B^n_0(x,\ep) \subset (f_0)_{x}^{-1} \circ \cdots \circ (f_{n-1})_{f_0^{n-1}(x)}^{-1}(B_{\ep}(f_0^n(x))).%
\end{equation*}
Now the desired inequality follows similarly as the first one, using Lemma \ref{lem_dist} again.%
\end{proof}

\begin{theorem}\label{thm_me_form}
For an NDS $(M,f_{\infty})$ with $f_n\in\EC(\lambda,\Gamma)$ it holds that%
\begin{equation}\label{eq_entform}
  h_{\EC_{\Mis}}(f_{\infty};m_{\infty}) = \limsup_{n\rightarrow\infty}\frac{1}{n}\int_M \log \left|\det \rmD f_0^n(x)\right| \rmd m(x).%
\end{equation}
\end{theorem}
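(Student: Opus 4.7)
The plan is to introduce the potential $\varphi_n(x) := -\log|\det\rmD f_n(x)|$, which is equicontinuous and uniformly bounded since the $f_n$ lie in $\EC(\lambda,\Gamma)$, and to observe via the chain rule that $S_n\varphi_\infty(x) = -\log|\det\rmD f_0^n(x)|$. I will derive the upper bound in \eqref{eq_entform} from Theorem \ref{thm_varineq} combined with Proposition \ref{prop_volumelemma}, and the lower bound from a direct estimate of $h(f_\infty;\PC_\infty;m_\infty)$ for a suitable fixed partition $\PC_\infty$, again using the volume lemma together with the distortion lemma (Lemma \ref{lem_dist}).

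\textbf{Upper bound.} With $\mu_\infty = m_\infty$ the metric pressure takes the form
\begin{equation*}
P_{m_\infty}(f_\infty;\varphi_\infty) = h_{\EC_{\Mis}}(f_\infty;m_\infty) - \limsup_{n\to\infty}\frac{1}{n}\int_M \log|\det\rmD f_0^n|\,\rmd m.
\end{equation*}
To show $P_{\tp}(f_\infty;\varphi_\infty)\leq 0$, I would fix $\ep>0$ small enough that $\ep/2$ is admissible in Proposition \ref{prop_volumelemma} and observe that for any $(n,\ep)$-separated $E\subset M$ the Bowen-balls $B^n_0(x,\ep/2)$, $x\in E$, are pairwise disjoint, giving
\begin{equation*}
1 \geq \sum_{x\in E} m(B^n_0(x,\ep/2)) \geq D\sum_{x\in E}|\det\rmD f_0^n(x)|^{-1} = D\sum_{x\in E}\rme^{S_n\varphi_\infty(x)}.
\end{equation*}
Thus $S(n,\ep;\varphi_\infty)\leq D^{-1}$ and hence $P_{\tp}(f_\infty;\varphi_\infty)\leq 0$. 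Theorem \ref{thm_varineq} then yields $h_{\EC_{\Mis}}(f_\infty;m_\infty) \leq \limsup_n \frac{1}{n}\int_M\log|\det\rmD f_0^n|\,\rmd m$.

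\textbf{Lower bound.} Pick a finite Borel partition $\PC$ of $M$ whose elements have diameter strictly less than the $\ep$ of Proposition \ref{prop_volumelemma} and have boundaries of zero $m$-volume, and set $\PC_n\equiv\PC$. Because Proposition \ref{prop_osy} applied to the constant density together with Lemma \ref{lem_suplem} provides a uniform upper bound on the densities of the measures $m_n = f_0^n m$, the sequence $\PC_\infty$ belongs to $\EC_{\Mis}$. For $C \in \PC^{(n)} := \bigvee_{i=0}^{n-1}(f_0^i)^{-1}\PC$ and any $x_C\in C$, the definition of $\PC^{(n)}$ gives $\diam(f_0^i C) < \ep$ for $i=0,\ldots,n-1$, hence $C \subset B^{n-1}_0(x_C,\ep)$, and Proposition \ref{prop_volumelemma} yields $m(C) \leq C_1 |\det\rmD f_0^{n-1}(x_C)|^{-1}$; moreover, Lemma \ref{lem_dist} applied to $f_0^{n-1}$ gives $\bigl|\log|\det\rmD f_0^{n-1}(x)|-\log|\det\rmD f_0^{n-1}(y)|\bigr| \leq C_0\ep$ for all $x,y\in C$. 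Combining these, a short calculation produces
\begin{equation*}
H_m(\PC^{(n)}) \geq \int_M \log|\det\rmD f_0^{n-1}|\,\rmd m - C_0\ep - \log C_1,
\end{equation*}
and dividing by $n$, letting $n\to\infty$, and noting the index shift is absorbed by the $\limsup$, we obtain $h(f_\infty;\PC_\infty;m_\infty) \geq \limsup_n \frac{1}{n}\int_M\log|\det\rmD f_0^n|\,\rmd m$.

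\textbf{Main obstacle.} The technically delicate step is the lower bound: one must check that $\PC_\infty$ lies in $\EC_{\Mis}$ uniformly in $n$ (which rests on the uniform bounds on the densities of $m_n$ supplied by Proposition \ref{prop_osy}) and keep the bookkeeping between $\PC^{(n)}$, Bowen-balls of order $n-1$, and the integral of $\log|\det\rmD f_0^{n-1}|$ tight enough that all additive error terms are absorbed by the $1/n$ scaling. The upper bound, by contrast, is a rather clean application of the variational inequality once the correct potential is identified.
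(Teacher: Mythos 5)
Your proposal is correct and follows essentially the same route as the paper: the upper bound via the potential $\varphi_n = -\log|\det\rmD f_n|$, the disjointness of Bowen-balls of radius $\ep/2$ around an $(n,\ep)$-separated set, the volume lemma and Theorem \ref{thm_varineq}; and the lower bound by estimating $H_m(\PC^{(n)})$ for a small-diameter partition with null boundaries using the inclusion of atoms in Bowen-balls and the volume lemma. The only cosmetic differences are that the paper integrates the pointwise information-function bound $-\log m(P_x)\geq\log|\det\rmD f_0^n(x)|-\log C_\ep$ directly, which makes your extra appeal to the distortion lemma unnecessary, and that it cites \cite[Prop.~4.4]{Kaw} together with Proposition \ref{prop_lmc} for membership of the constant partition sequence in $\EC_{\Mis}$, which rests on the same uniform density bounds you invoke.
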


\begin{proof}
We prove the theorem in two steps.%

\emph{Step 1.} We prove the inequality ``$\leq$'' in \eqref{eq_entform}. By Proposition \ref{prop_volumelemma}, for all sufficiently small $\ep$ there are constants $C_{\ep},D_{\ep}>0$ with%
\begin{equation}\label{eq_vollemma}
  D_{\ep} \leq m\left(B^n_0(x,\ep)\right)\left|\det \rmD f_0^n(x)\right| \leq C_{\ep}%
\end{equation}
for all $n\geq0$ and $x\in M$. Putting $\varphi_n(x) := -\log|\det\rmD f_n(x)|$, we can show that (i) $\varphi_{\infty} = (\varphi_n)_{n\geq0}$ is equicontinuous and uniformly bounded and (ii) $P_{\tp}(f_{\infty};\varphi_{\infty})=0$. By the variational inequality (Theorem \ref{thm_varineq}) this implies%
\begin{equation*}
  h_{\EC_{\Mis}}(f_{\infty};m_{\infty}) + \liminf_{n\rightarrow\infty}\frac{1}{n}\int_M S_n\varphi_{\infty} \rmd m \leq 0,%
\end{equation*}
which yields%
\begin{eqnarray*}
  h_{\EC_{\Mis}}(f_{\infty};m_{\infty}) &\leq& \limsup_{n\rightarrow\infty}\frac{1}{n}\int_M \sum_{i=0}^{n-1} \left( -\varphi_i \circ f_0^i \right) \rmd m \\
                                           &=& \limsup_{n\rightarrow\infty}\frac{1}{n}\int_M \log \left|\det \rmD f_0^n(x)\right|\rmd m(x).%
\end{eqnarray*}
Equicontinuity and boundedness of $\varphi_{\infty}$ are clear, since each $\varphi_n$ is a $\CC^1$-function and these functions together with their derivatives are uniformly bounded. The proof for $P_{\tp}(f_{\infty};\varphi_{\infty})=0$ follows from \eqref{eq_vollemma}: Let $E\subset M$ be an $(n,\ep)$-separated set for a small $\ep$. Then the balls $B^n_0(x,\ep/2)$, $x\in E$, are disjoint, and hence%
\begin{eqnarray*}
  \sum_{x\in E}\rme^{S_n\varphi_{\infty}(x)} &=& \sum_{x\in E}|\det\rmD f_0^n(x)|^{-1}\\
  &\leq& \frac{1}{D_{\ep/2}}\sum_{x\in E}m\left(B^n_0(x,\ep/2)\right) \leq \frac{1}{D_{\ep/2}}m(M),%
\end{eqnarray*}
implying $P_{\tp}(f_{\infty};\varphi_{\infty}) \leq 0$. Using the other half of the volume lemma, analogously we find that $\sum_{x\in F}\rme^{S_n\varphi_{\infty}(x)} \geq C_{\ep}^{-1}m(M)$ for any $(n,\ep)$-spanning set $F$, and hence $P_{\tp}(f_{\infty};\varphi_{\infty}) \geq 0$.%

\emph{Step 2.} We prove the converse inequality. To this end, we use the notation $\PC^{(n)} := \bigvee_{i=0}^nf_0^{-i}\PC_i$ and write%
\begin{equation*}
  I_{\PC^{(n)}}:M \rightarrow \R,\qquad I_{\PC^{(n)}}(x) = -\log m(P_x),%
\end{equation*}
for the associated information function. Here $P_x$ is the unique element of $\PC^{(n)}$ such that $x \in P_x$. Using this notation, we obtain%
\begin{equation}\label{eq_informationfctint}
  H_m(\PC^{(n)}) = \int_M I_{\PC^{(n)}}(x) \rmd m(x).%
\end{equation}
Now let us assume that the diameter of each element of each partition $\PC_n$ is smaller than a given $\ep>0$. Then every element of the partition $\PC^{(n)}$ is contained in the Bowen-ball $B^n_0(x,\ep)$ around any of its elements $x$. Using \eqref{eq_informationfctint}, this implies%
\begin{equation}\label{eq_bowenballint}
  H_m(\PC^{(n)}) \geq \int_M -\log m(B_0^n(x,\ep)) \rmd m(x).%
\end{equation}
From \eqref{eq_vollemma} and \eqref{eq_bowenballint} we obtain for sufficiently small $\ep>0$ that%
\begin{eqnarray*}
  \limsup_{n\rightarrow\infty}\frac{1}{n}H_m(\PC^{(n)}) &\geq& \limsup_{n\rightarrow\infty}\frac{1}{n}\int_M -\log\left( C_{\ep} \cdot \left|\det \rmD f_0^n(x)\right|^{-1} \right) \rmd m(x)\\
                                                           &=& \limsup_{n\rightarrow\infty}\frac{1}{n}\int_M \log\left|\det \rmD f_0^n(x)\right| \rmd m(x).%
\end{eqnarray*}
From Proposition \ref{prop_lmc} and \cite[Prop.~4.4]{Kaw} it follows that $\EC_{\Mis}$ contains all constant sequences of partitions whose members have boundaries of volume zero. Since there exist such partitions with arbitrarily small diameters (cf.~\cite[Lem.~4.5.1]{KHa}), we are done.%
\end{proof}

Using again the volume lemma, we can also provide a formula for the topological entropy of an expanding NDS.%

\begin{theorem}
For an NDS $(M,f_{\infty})$ with $f_n\in\EC(\lambda,\Gamma)$ it holds that%
\begin{equation*}
  h_{\tp}(f_{\infty}) = \limsup_{n\rightarrow\infty}\frac{1}{n}\log\int_M |\det \rmD f_0^n(x)|\rmd m(x).%
\end{equation*}
\end{theorem}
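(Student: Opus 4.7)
The argument closely parallels Step 1 of Theorem \ref{thm_me_form} and will sandwich the integral $\int_M |\det \rmD f_0^n|\,\rmd m$ between the spanning and separated quantities used in the equivalent definitions of topological entropy, by combining the volume lemma (Proposition \ref{prop_volumelemma}) with the distortion lemma (Lemma \ref{lem_dist}). Fix $\ep>0$ small enough that both lemmas apply, yielding constants $C_{\ep}, D_{\ep/2}, C_0 > 0$. For any $u \in B_0^n(x,\ep)$ we have $d(f_0^k(x),f_0^k(u)) < \ep$ for $0\leq k\leq n-1$ by definition of the Bowen metric, so Lemma \ref{lem_dist}, applied with the roles of $x$ and $u$ swapped, gives the two-sided distortion bound
\begin{equation*}
  C_1^{-1}|\det \rmD f_0^n(x)| \leq |\det \rmD f_0^n(u)| \leq C_1 |\det \rmD f_0^n(x)|,\qquad C_1 := \rme^{C_0 \ep}.
\end{equation*}

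For the upper bound, let $F\subset M$ be an $(n,\ep;f_{\infty})$-spanning set, so that $M = \bigcup_{x\in F} B_0^n(x,\ep)$. Combining the distortion estimate with the upper bound of Proposition \ref{prop_volumelemma},
\begin{equation*}
  \int_M |\det \rmD f_0^n(u)|\,\rmd m(u) \leq \sum_{x\in F}\int_{B_0^n(x,\ep)}\!\!|\det \rmD f_0^n(u)|\,\rmd m(u) \leq C_1 \sum_{x\in F}|\det \rmD f_0^n(x)|\cdot m(B_0^n(x,\ep)) \leq C_1 C_{\ep} \cdot \#F,
\end{equation*}
hence $\int_M|\det \rmD f_0^n|\,\rmd m \leq C_1 C_{\ep}\, r_{\spn}(n,\ep;f_{\infty})$. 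Dually, if $E\subset M$ is $(n,\ep;f_{\infty})$-separated, then $\{B_0^n(x,\ep/2)\}_{x\in E}$ is a pairwise disjoint family, and combining the distortion bound with the lower bound of Proposition \ref{prop_volumelemma} yields
\begin{equation*}
  \int_M |\det \rmD f_0^n(u)|\,\rmd m(u) \geq \sum_{x\in E}\int_{B_0^n(x,\ep/2)}\!\!|\det \rmD f_0^n(u)|\,\rmd m(u) \geq C_1^{-1}\sum_{x\in E}|\det \rmD f_0^n(x)|\cdot m(B_0^n(x,\ep/2)) \geq C_1^{-1}D_{\ep/2}\cdot \#E,
\end{equation*}
so that $r_{\sep}(n,\ep;f_{\infty}) \leq C_1 D_{\ep/2}^{-1}\int_M|\det \rmD f_0^n|\,\rmd m$.

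Taking $\frac{1}{n}\log$ in both inequalities, then $\limsup_{n\to\infty}$, and finally letting $\ep\searrow 0$, the $\frac{1}{n}\log$ of the multiplicative constants vanishes, while the spanning and separated quantities both converge to $h_{\tp}(f_{\infty})$. This sandwiches $\limsup_n\frac{1}{n}\log\int_M|\det \rmD f_0^n|\,\rmd m$ from both sides and gives the claimed equality. The argument is essentially a rearrangement of the ingredients already developed for Theorem \ref{thm_me_form}, and the only point requiring care is the applicability of Lemma \ref{lem_dist} throughout a Bowen ball, which is immediate from the definition of $d_{0,n}$.
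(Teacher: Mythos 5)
Your proof is correct and follows essentially the same route as the paper: sandwiching $\int_M|\det \rmD f_0^n|\,\rmd m$ between separated and spanning counts by combining Proposition \ref{prop_volumelemma} with Lemma \ref{lem_dist} on Bowen balls. The only (harmless) cosmetic difference is that you use Bowen balls of order $n$ rather than the paper's order $n-1$, which spares you the paper's extra step of absorbing the leftover factor $|\det \rmD f_{n-1}|$, bounded between $\lambda^{\dim M}$ and a constant.
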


\begin{proof}
The proof is divided into two steps.%

\emph{Step 1.} First we prove the inequality ``$\leq$''. If $E\subset M$ is a maximal $(n-1,\ep)$-separated set, then the balls $B^{n-1}_0(x,\ep/2)$, $x\in E$, are disjoint. Hence, we find%
\begin{equation}\label{eq_te_firstest}
  \int_M |\det \rmD f_0^n(x)| \rmd m(x) \geq \sum_{x\in E} \int_{B^{n-1}_0(x,\ep/2)} |\det \rmD f_0^n(y)| \rmd m(y).%
\end{equation}
By Lemma \ref{lem_dist}, there exists a constant $C_0$ such that%
\begin{equation*}
  \left|\det \rmD f_0^n(x)\right| \leq \rme^{C_0 d(f_0^n(x),f_0^n(y))}\left|\det \rmD f_0^n(y)\right|%
\end{equation*}
for all $y \in B^{n-1}_0(x,\ep)$, $x\in M$. Since the $f_n$ are Lipschitz continuous with a common Lipschitz constant $C$, we have $d(f_0^n(x),f_0^n(y)) \leq C\ep$, and hence%
\begin{equation*}
  \left|\det \rmD f_0^n(y)\right| \geq \rme^{-C\ep}\left|\det \rmD f_0^n(x)\right|,%
\end{equation*}
which, together with the volume lemma, implies%
\begin{eqnarray*}
  && \int_{B^{n-1}_0(x,\ep/2)}|\det\rmD f_0^n(x)| \rmd m(x) \geq \rme^{-C\ep} \left|\det \rmD f_0^n(x)\right| m\left(B^{n-1}_0\left(x,\frac{\ep}{2}\right)\right)\\
	 && \qquad\qquad \geq \rme^{-C\ep}D_{\ep/2}|\det\rmD f_{n-1}(x)| \geq \rme^{-C\ep}D_{\ep/2}\lambda^{\dim M} =: b_{\ep}.%
\end{eqnarray*}
Together with \eqref{eq_te_firstest} this gives%
\begin{equation*}
  \int_M |\det \rmD f_0^n(x)| \rmd m(x) \geq b_{\ep} \cdot r_{\sep}(n-1,\ep;f_{\infty}).%
\end{equation*}
This yields the desired estimate.%

\emph{Step 2.} The proof for the converse inequality is similar. Here we let $F\subset M$ be a minimal $(n-1,\ep)$-spanning set and obtain%
\begin{equation*}
  \int_M |\det \rmD f_0^n(x)| \rmd m(x) \leq \sum_{x\in F} \int_{B^{n-1}_0(x,\ep)} |\det \rmD f_0^n(y)| \rmd m(y).%
\end{equation*}
Using the distortion lemma and the volume lemma again, we find%
\begin{equation*}
  \int_{B^{n-1}_0(x,\ep)} |\det \rmD f_0^n(y)| \rmd m(y) \leq c_{\ep}%
\end{equation*}
for a constant $c_{\ep}>0$, and hence%
\begin{equation*}
  \int_M |\det \rmD f_0^n(x)| \rmd m(x) \leq c_{\ep} r_{\spn}(n-1,\ep;f_{\infty}),%
\end{equation*}
implying the lower estimate.%
\end{proof}

\begin{remark}
Note that Jensen's inequality gives%
\begin{equation*}
  \int_M \log|\det \rmD f_0^n(x)|\rmd m(x) \leq \log \int_M |\det \rmD f_0^n(x)|\rmd m(x)%
\end{equation*}
for all $n$, showing the inequality between metric and topological entropy which we already know from the variational inequality.%
\end{remark}

It is clear that the expressions for the metric and the topological entropy in general do not coincide. (They do coincide, e.g., if the $f_n$ are algebraic torus endomorphisms.) In fact, this is already so in the autonomous case, where it is well-known that the absolutely continuous invariant measure of a $\CC^2$-expanding map $f$ is not necessarily a measure of maximal entropy (cf.~Walters \cite{Wa2}). However, this measure is an equilibrium state for the pressure w.r.t.~the potential $\varphi(x) = -\log |\det \rmD f(x)|$. From our results we find that the analogous statement is true in the nonautonomous case.%

\begin{corollary}
For $n\geq0$ let $f_n \in \EC(\lambda,\Gamma)$ and $\varphi_n(x) :\equiv - \log|\det \rmD f_n(x)|$. Then%
\begin{equation*}
  P_{m_{\infty}}(f_{\infty};\varphi_{\infty}) = P_{\tp}(f_{\infty};\varphi_{\infty}) = 0.%
\end{equation*}
\end{corollary}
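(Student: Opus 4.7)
My plan is to show both equalities by combining the entropy formula (Theorem \ref{thm_me_form}) with a direct computation of the Birkhoff-type sums against $m_{\infty}$, plus the observation that $P_{\tp}(f_{\infty};\varphi_{\infty})=0$ was already established in the course of proving Theorem \ref{thm_me_form}.

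First, I would handle the metric pressure. Since $m_i = f_0^i m$ and $\log|\det \rmD f_0^n(x)| = \sum_{i=0}^{n-1}\log|\det \rmD f_i(f_0^i(x))|$ by the chain rule, the change-of-variables formula gives
\begin{equation*}
  \sum_{i=0}^{n-1}\int_M \varphi_i\, \rmd m_i = -\sum_{i=0}^{n-1}\int_M \log|\det \rmD f_i|\circ f_0^i\, \rmd m = -\int_M \log|\det \rmD f_0^n(x)|\, \rmd m(x).
\end{equation*}
Hence, writing $a_n := \tfrac{1}{n}\int_M \log|\det \rmD f_0^n|\rmd m$, the definition of $P_{m_{\infty}}$ reads
\begin{equation*}
  P_{m_{\infty}}(f_{\infty};\varphi_{\infty}) = h_{\EC_{\Mis}}(f_{\infty};m_{\infty}) + \liminf_{n\rightarrow\infty}(-a_n) = h_{\EC_{\Mis}}(f_{\infty};m_{\infty}) - \limsup_{n\rightarrow\infty} a_n.
\end{equation*}
By Theorem \ref{thm_me_form}, $h_{\EC_{\Mis}}(f_{\infty};m_{\infty}) = \limsup_{n\rightarrow\infty} a_n$, so the two terms cancel and $P_{m_{\infty}}(f_{\infty};\varphi_{\infty}) = 0$.

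For the topological pressure, I would simply invoke Step 1 of the proof of Theorem \ref{thm_me_form}, where the identity $P_{\tp}(f_{\infty};\varphi_{\infty})=0$ is proved directly from the volume lemma (Proposition \ref{prop_volumelemma}): an $(n,\ep)$-separated set $E$ gives disjoint Bowen balls $B_0^n(x,\ep/2)$ and
\begin{equation*}
  \sum_{x\in E}\rme^{S_n\varphi_{\infty}(x)} = \sum_{x\in E}|\det \rmD f_0^n(x)|^{-1} \leq D_{\ep/2}^{-1}\sum_{x\in E}m(B_0^n(x,\ep/2)) \leq D_{\ep/2}^{-1},
\end{equation*}
yielding $P_{\tp}(f_{\infty};\varphi_{\infty})\leq 0$; the reverse inequality is obtained symmetrically from an $(n,\ep)$-spanning set using the upper bound in the volume lemma.

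The main ``obstacle'' is merely bookkeeping: making sure the $\liminf$ in the definition of $P_{m_{\infty}}$ matches the $\limsup$ in Theorem \ref{thm_me_form} after the sign flip, which it does because $\liminf_n(-a_n)=-\limsup_n a_n$. Everything else is an immediate consequence of the results already developed.
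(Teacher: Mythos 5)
Your proposal is correct and follows essentially the same route as the paper: the identity $P_{\tp}(f_{\infty};\varphi_{\infty})=0$ is taken from Step 1 of the proof of Theorem \ref{thm_me_form}, and the vanishing of the metric pressure follows from cancelling the entropy formula of that theorem against the integral term, exactly as you compute via the chain rule and the push-forward relation $m_i=f_0^i m$. The only difference is that you spell out the bookkeeping ($\liminf_n(-a_n)=-\limsup_n a_n$) that the paper leaves implicit.
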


\begin{proof}
The fact that $P_{\tp}(f_{\infty};\varphi_{\infty}) = 0$ is shown in the proof of Theorem \ref{thm_me_form}. The first equality immediately follows from the formula for the metric entropy and the definition of the measure-theoretic pressure.%
\end{proof}

\section{Equi-conjugacy of expanding systems}\label{sec_equiconj}%

A classical result about expanding maps, proved by Shub \cite{Shu}, asserts that any two expanding $\CC^1$-maps, defined on the same compact manifold $M$, are topologically conjugate iff their induced maps on the fundamental group $\pi_1(M)$ are algebraically conjugate. Also this result can be extended to the time-dependent situation. In particular, this will show that the full variational principle holds for a topological NDS built from expanding $\CC^1$-maps $f_n:M\rightarrow M$ which have a common expansion factor $\lambda>1$ and induce the same map on $\pi_1(M)$.%

\begin{theorem}
Let $(M,f_{\infty})$ be an NDS on a compact Riemannian manifold $M$ with $\CC^1$-expanding maps $f_n$ having expansion factors uniformly bounded away from one. Additionally assume that the map induced by $f_n$ on the fundamental group $\pi_1(M)$ is the same for all $n$, say $(f_n)_* \equiv \varphi \in \mathrm{End}(\pi_1(M))$. Then, for any $\CC^1$-expanding map $f$ with $f_* = \varphi$ there exists an equi-conjugacy $\pi_{\infty} = (\pi_n)_{n=0}^{\infty}$ between the NDS $f_{\infty}$ and $f$.%
\end{theorem}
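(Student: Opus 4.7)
The plan is to imitate Shub's classical argument on the universal cover $\tilde{M}$ of $M$: lift everything to an equivariant setting where $\tilde f^{-1}$ becomes a genuine contraction, and solve the conjugacy equation by Banach's fixed point theorem. Since any expanding $\CC^1$-map on a closed manifold is a covering map, each $f_n$ (and $f$) lifts to a homeomorphism $\tilde f_n,\tilde f:\tilde{M}\to\tilde{M}$. Because $(f_n)_* = f_* = \varphi$, one can choose these lifts so that they all satisfy the common equivariance law $\tilde f_n\circ\gamma = \varphi(\gamma)\circ\tilde f_n$ and $\tilde f\circ\gamma = \varphi(\gamma)\circ\tilde f$ for every deck transformation $\gamma\in\pi_1(M)$. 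A standard path/length-lifting argument upgrades the pointwise inequality $|\rmD f_n(v)|\geq\lambda|v|$ on $M$ to the global bound $\tilde d(\tilde f_n(x),\tilde f_n(y))\geq\lambda\,\tilde d(x,y)$ on the simply connected cover, so that $\tilde f_n^{-1}$ and $\tilde f^{-1}$ are $\lambda^{-1}$-contractions on all of $\tilde M$.

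I would then set up the fixed-point problem. Let $X$ be the space of sequences $h = (h_n)_{n\geq 0}$ of continuous $\pi_1(M)$-equivariant maps $\tilde M\to\tilde M$ at uniformly bounded $\tilde d$-distance from the identity --- equivariance forces $z\mapsto\tilde d(h_n(z),z)$ to descend to a continuous, hence bounded, function on compact $M$ --- equipped with the complete metric $d_X(h,h'):=\sup_{n,z}\tilde d(h_n(z),h_n'(z))$. Define $T:X\to X$ by $T(h)_n:=\tilde f^{-1}\circ h_{n+1}\circ\tilde f_n$. Common equivariance places $T(h)$ in $X$, and the $\lambda^{-1}$-contraction of $\tilde f^{-1}$ yields $d_X(T(h),T(h'))\leq\lambda^{-1}d_X(h,h')$. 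Banach's theorem produces a unique fixed point $(\tilde\pi_n)$ with $\tilde f\circ\tilde\pi_n = \tilde\pi_{n+1}\circ\tilde f_n$, descending to continuous maps $\pi_n:M\to M$ satisfying $f\circ\pi_n = \pi_{n+1}\circ f_n$. For invertibility I run the dual contraction $T'(k)_n:=\tilde f_n^{-1}\circ k_{n+1}\circ\tilde f$ to obtain $(\tilde\sigma_n)$ with $\tilde f_n\circ\tilde\sigma_n = \tilde\sigma_{n+1}\circ\tilde f$. Then both $(\tilde\sigma_n\circ\tilde\pi_n)$ and the identity sequence $(\id)$ are fixed points in $X$ of $S(h)_n:=\tilde f_n^{-1}\circ h_{n+1}\circ\tilde f_n$, itself a $\lambda^{-1}$-contraction, so uniqueness forces $\tilde\sigma_n\circ\tilde\pi_n\equiv\id$ and symmetrically $\tilde\pi_n\circ\tilde\sigma_n\equiv\id$. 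Hence each $\pi_n$ is a homeomorphism with $\pi_n^{-1}$ descending from $\tilde\sigma_n$.

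The final step is the uniform equicontinuity of $\{\pi_n\}$ and $\{\pi_n^{-1}\}$. Iterating $\tilde\pi_n^{-1} = \tilde f_n^{-k}\circ\tilde\pi_{n+k}^{-1}\circ\tilde f^k$ and using $D:=\sup_{n,z}\tilde d(\tilde\pi_n^{\pm 1}(z),z)<\infty$ yields
\[
\tilde d(\tilde\pi_n^{-1}(x),\tilde\pi_n^{-1}(y))\leq\lambda^{-k}\bigl(2D + \tilde d(\tilde f^k(x),\tilde f^k(y))\bigr),
\]
and for $\{\pi_n^{-1}\}$ this already suffices: first pick $k$ large to kill $2D\lambda^{-k}$, then use continuity of the \emph{single} map $\tilde f^k$ to control the remaining term, extracting a modulus of continuity independent of $n$. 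For $\{\pi_n\}$ the symmetric identity $\tilde\pi_n = \tilde f^{-k}\circ\tilde\pi_{n+k}\circ\tilde f_n^k$ gives the same estimate but with $\tilde f_n^k$ in the last slot, and now one needs a uniform-in-$n$ bound on $\tilde d(\tilde f_n^k(x),\tilde f_n^k(y))$ for close $x,y$. This is the main obstacle: it requires a uniform Lipschitz bound on the $f_n$, i.e., a common $\CC^1$-bound of the kind automatically supplied by the class $\EC(\lambda,\Gamma)$ used earlier in the paper and presumably implicit here; granted any such uniform bound, the same contraction-versus-expansion balancing closes the proof and yields both equicontinuity families, completing the equi-conjugacy.
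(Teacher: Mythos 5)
Your strategy is the paper's: lift to the universal cover, solve the conjugacy equation by Banach's fixed point theorem on a space of sequences of deck-commuting maps, and get equicontinuity from the balance between the contraction $\lambda^{-k}$ and the modulus of continuity of the map sitting in the ``last slot''. The one step that does not hold as written is the claim that ``common equivariance places $T(h)$ in $X$''. Membership in your $X$ requires $\sup_{n}\sup_z\tilde{d}(\tilde{f}^{-1}(\tilde{f}_n(z)),z)<\infty$, i.e.\ uniformity in $n$; equivariance only yields, for each \emph{fixed} $n$, a finite bound (the displacement descends to the compact $M$), and the equivariance law does not even single out the lift. Already on $\rmS^1=\R/\Z$ with $f(x)=2x$ and $f_n(x)=2x+\theta_n$, every lift $\tilde{f}_n(x)=2x+\theta_n+k_n$, $k_n\in\Z$, satisfies the same law $\tilde{f}_n(x+1)=\tilde{f}_n(x)+2$ (the deck group is abelian), yet $\sup_z\tilde{d}(\tilde{f}^{-1}\tilde{f}_n(z),z)=|\theta_n+k_n|/2$ is unbounded if you take $k_n=n$; then $T(\id)\notin X$ and the iteration never starts. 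So you must genuinely normalize the lifts (the paper does this by anchoring them at a base point $y_0$ via chosen paths $\beta_n$, and replaces your ``uniformly bounded displacement from the identity'' by the condition $h_k(y_0)=\tilde{f}_0^k(h_0(y_0))$, proving finiteness of its metric by comparing fundamental domains through a common point), and even then the uniform bound on the displacement over a whole fundamental domain uses some uniform upper control on the $f_n$. This is fixable, but it is a real omission, not a formality.

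Apart from this, the differences from the paper are organizational and your choices are sound. The paper works from the outset in a space of homeomorphisms with the two-sided metric $d^{\pm}_{\infty}$ controlling $h$ and $h^{-1}$ together, whereas you recover invertibility a posteriori from the dual contraction and uniqueness of the fixed point of $S$; your route is cleaner. Your conjugacy is oriented as $\pi_{n+1}\circ f_n=f\circ\pi_n$ while the paper's is $\pi_{k+1}\circ f=f_k\circ\pi_k$, so the roles of the two equicontinuity claims are mirrored: the family needing a uniform modulus of continuity for the $f_n$ is $\{\pi_n\}$ for you and $\{\pi_k^{-1}\}$ for the paper. Your closing observation that this step requires a uniform Lipschitz (or at least equicontinuity) bound on the $f_n$ is correct and is precisely what the paper's ``the proof works analogously'' conceals; under the stated hypotheses (only a lower bound on the expansion factors) this is an additional assumption, automatically satisfied for $f_n\in\EC(\lambda,\Gamma)$.
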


\begin{proof}
We will obtain the equi-conjugacy as a fixed point of a contraction on an appropriately defined space of sequences. The proof proceeds in three steps.%

\emph{Step 1.} Fix an expanding $\CC^1$-map $f:M\rightarrow M$ with $f_* = \varphi$ (for instance, $f = f_1$). Let $\pi:\tilde{M}\rightarrow M$ be the universal covering of $M$. On $\tilde{M}$ we consider the lifted Riemannian metric with distance function denoted by $\tilde{d}(\cdot,\cdot)$, which makes the covering projection $\pi$ a local isometry and $\tilde{M}$ a complete Riemannian manifold. The deck transformation group of $\pi$ is the subgroup of the isometry group $\mathrm{Iso}(\tilde{M})$ given by%
\begin{equation*}
  \rmD(\pi) = \left\{ \gamma \in \mathrm{Iso}(\tilde{M})\ :\ \pi \circ \gamma = \pi \right\}.%
\end{equation*}
If $g:M\rightarrow M$ is any self-covering of $M$ and $\tilde{g}:\tilde{M}\rightarrow\tilde{M}$ a lift of $g$, i.e., $\pi \circ \tilde{g} = g \circ \pi$, then $\tilde{g}$ is invertible and induces an endomorphism of $\rmD(\pi)$ by%
\begin{equation*}
  \tilde{g}^*:\rmD(\pi) \rightarrow \rmD(\pi),\quad \gamma \mapsto \tilde{g} \circ \gamma \circ \tilde{g}^{-1}.%
\end{equation*}
We can lift each $f_n$ to an expanding diffeomorphism $\tilde{f}_n:\tilde{M} \rightarrow \tilde{M}$, and we also lift $f$ to $\tilde{f}:\tilde{M}\rightarrow\tilde{M}$. We choose these lifts in the following way. First we pick the lift $\tilde{f}$ arbitrarily. Since $\tilde{f}^{-1}$ is a contraction, it has a unique fixed point $y_0$. We put $x_0 := \pi(y_0)$ and choose for each $n$ a continuous path $\beta_n$ from $f_n(x_0)$ to $f(x_0) = x_0$. Then there exists a unique lift $\tilde{f}_n$ of $f_n$ such that $\tilde{f}_n(y_0)$ is the endpoint of the lift of $\beta_n^{-1}$ (i.e., the path $\beta_n$ traversed in the opposite direction)which starts at $y_0$. In particular, this guarantees that $\tilde{f}^* = \tilde{f}_n^*$ for all $n$ (cf.~\cite[Proof of Thm.~3]{Shu}).%

\emph{Step 2.} Let%
\begin{equation*}
  \AC := \left\{ h \in \mathrm{Homeo}(\tilde{M})\ :\ h \circ \gamma = \gamma \circ h,\ \forall \gamma \in \rmD(\pi) \right\},%
\end{equation*}
endowed with the metric%
\begin{equation*}
  d^{\pm}_{\infty}(h,i) := \sup_{y\in\tilde{M}}\tilde{d}(h(y),i(y)) + \sup_{y\in\tilde{M}}\tilde{d}(h^{-1}(y),i^{-1}(y)).%
\end{equation*}
Finiteness of $d^{\pm}_{\infty}$ follows from the fact that there exists a compact fundamental domain $K\subset\tilde{M}$ for the natural action of $\rmD(\pi)$ on $\tilde{M}$, and hence the supremum over all $y\in\tilde{M}$ reduces to the supremum over $y\in K$. The space on which our operator acts is defined by%
\begin{equation*}
  \BC := \left\{ (h_k)_{k=0}^{\infty}\ :\ h_k \in \AC \mbox{ and } h_k(y_0) = \tilde{f}_0^k(h_0(y_0)),\ \forall k \geq 0 \right\}.%
\end{equation*}
From \cite[Thm.~5]{Shu} it follows that such sequences exist, hence $\BC \neq \emptyset$. We define%
\begin{equation*}
  D_{\infty}\left( (h_k)_{k=0}^{\infty},(i_k)_{k=0}^{\infty} \right) := \sup_{k\geq0}d^{\pm}_{\infty}(h_k,i_k).%
\end{equation*}
Let $K \subset \tilde{M}$ be a compact fundamental domain for the action of $\rmD(\pi)$ with $y_0 \in K$. Since $h_k$ and $i_k$ commute with deck transformations, also $h_k(K)$ and $i_k(K)$ are compact fundamental domains, which both contain the point $\tilde{f}_0^k(h_0(y_0))$. Let $C$ be the union of all compact fundamental domains that contain $\tilde{f}_0^k(h_0(y_0))$. Then $C$ is bounded and%
\begin{equation*}
  d_{\infty}(h_k,i_k) = \sup_{y\in K}d(h_k(y),i_k(y)) \leq \mathrm{diam}C,\ \forall k\geq0.%
\end{equation*}
Together with the analogous statement for the inverse maps, it follows that $D_{\infty}$ is finite. The proof that it is a metric is trivial. The definition of $D_{\infty}$ implies that convergence in $D_{\infty}$ is equivalent to uniform convergence in every component and for the inverses. Since the equality $h_k(y_0) = \tilde{f}_0^k(h_0(y_0))$ carries over to continuous limits, it follows that $(\BC,D_{\infty})$ is a complete metric space.%

\emph{Step 3.} We define the operator%
\begin{equation*}
  \sigma:\BC \rightarrow \BC,\quad (h_k)_{k\geq0} \mapsto (\tilde{f}_k^{-1} \circ h_{k+1} \circ \tilde{f})_{k\geq0}.%
\end{equation*}
This definition makes sense, because%
\begin{equation*}
  \tilde{f}^{-1}_k(h_{k+1}(\tilde{f}(y_0))) = \tilde{f}^{-1}_k(h_{k+1}(y_0)) = \tilde{f}^{-1}_k(\tilde{f}_0^{k+1}(h_0(y_0))) = \tilde{f}_0^k(h_0(y_0))%
\end{equation*}
and $\tilde{f}^{-1} \circ h_{k+1} \circ \tilde{f} \in \AC$, following from a similarly simple computation. To show that $\sigma$ is a contraction, note that%
\begin{eqnarray*}
  d_{\infty}\left(\tilde{f}^{-1}_k \circ h_{k+1} \circ \tilde{f},\tilde{f}^{-1}_k \circ i_{k+1} \circ \tilde{f}\right) &=& d_{\infty}\left(\tilde{f}^{-1}_k \circ h_{k+1},\tilde{f}^{-1}_k \circ i_{k+1}\right)\\
	&\leq& \lambda^{-1}d_{\infty}\left(h_{k+1},i_{k+1}\right),%
\end{eqnarray*}
where $\lambda$ is a common expansion factor of the maps $f_n$. From this observation one easily derives that $\sigma$ is a contraction, and hence there is a unique sequence $(h_k)_{k=0}^{\infty}$ in $\BC$ such that $h_{k+1} \circ \tilde{f} \equiv \tilde{f}_k \circ h_k$. Since the maps $h_k$ commute with deck transformations, we can project them to homeomorphisms $\pi_k:M \rightarrow M$ such that $\pi_{k+1} \circ f \equiv f_k \circ \pi_k$. It remains to show equicontinuity. First note that there exists a constant $c>0$ with%
\begin{equation*}
  \tilde{d}\left(\tilde{f}^n(y),\tilde{f}_k^n(h_k(y))\right) = \tilde{d}\left(\tilde{f}^n(y),h_{k+n}(\tilde{f}^n(y))\right) \leq c%
\end{equation*}
for all $y\in\tilde{M}$ and $n\geq 0$. To show this, for a fixed $y\in\tilde{M}$ let $\gamma_y\in\rmD(\pi)$ be such that $z := \gamma_y(y)$ is contained in a fixed fundamental domain $K$ of $\rmD(\pi)$ with $y_0 \in K$. Then $h_k(y) = h_k(\gamma_y^{-1}(z)) = \gamma_y^{-1}(h_k(z))$, implying $\tilde{d}(h_k(y),y) = \tilde{d}(\gamma_y^{-1}(h_k(z)),\gamma_y^{-1}(z)) = \tilde{d}(h_k(z),z)$. On $K$ the functions $h_k$ are uniformly bounded (using the same argument that was used to prove $D_{\infty}<\infty$). This shows the existence of $c$. Furthermore, for all $y,z\in\tilde{M}$,%
\begin{eqnarray*}
  \tilde{d}(h_k(y),h_k(z)) &\leq& \lambda^{-n}\tilde{d}\left(\tilde{f}_k^n(h_k(y)),\tilde{f}_k^n(h_k(z))\right)\\
	                         &\leq& \lambda^{-n}\Bigl[\tilde{d}\left(\tilde{f}_k^n(h_k(y)),\tilde{f}^n(y)\right) + \tilde{d}\left(\tilde{f}^n(y),\tilde{f}^n(z)\right)\\
													     && \qquad  + \tilde{d}\left(\tilde{f}^n(z),\tilde{f}_k^n(h_k(z))\right)\Bigr]\\
					    				     &\leq& 2c\lambda^{-n} + \lambda^{-n}\tilde{d}\left(\tilde{f}^n(y),\tilde{f}^n(z)\right).%
\end{eqnarray*}
Hence, for given $\ep>0$ we can first choose $n$ large enough so that $2c\lambda^{-n} < \ep/2$. Then we choose $\delta>0$ so that $\tilde{d}(\tilde{f}^n(y),\tilde{f}^n(z)) < \ep/2$ if $\tilde{d}(y,z) < \delta$. This implies $\tilde{d}(h_k(y),h_k(z)) < \ep$ for all $k$, whenever $\tilde{d}(y,z) < \delta$, showing equicontinuity of $(h_k)_{k=0}^{\infty}$, and hence of $(\pi_k)_{k=0}^{\infty}$. For $(\pi_k^{-1})_{k=0}^{\infty}$ the proof works analogously.%
\end{proof}

\begin{remark}
In Ruelle \cite[Sec.~4]{Ru2} one finds a similar result. Here the nonautonomous system is given as a small time-dependent perturbation of a fixed Axiom A diffeomorphism $f$ around one of its basic sets $\Lambda$. In this case, one can show the existence of a time-dependent uniformly hyperbolic set such that the restriction of the nonautonomous system to this set is equi-conjugate to the restriction of $f$ to $\Lambda$.%
\end{remark}

\begin{corollary}
For any NDS $(M,f_{\infty})$ as given in the above theorem a full variational principle holds, i.e.,%
\begin{equation*}
  \sup_{\mu_{\infty}}h_{\EC_{\mathrm{M}}}(f_{\infty};\mu_{\infty}) = h_{\tp}(f_{\infty}),%
\end{equation*}
where the supremum is taken over all IMSs $\mu_{\infty}$.%
\end{corollary}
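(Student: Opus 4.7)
The strategy is to reduce the variational principle for $f_{\infty}$ to the classical one for the autonomous expanding map $f$ supplied by the preceding theorem. The $(\leq)$ direction is already available: it is the variational inequality $h_{\EC_{\Mis}}(f_{\infty};\mu_{\infty})\leq h_{\tp}(f_{\infty})$ quoted from \cite[Thm.~28]{Kaw} (and in fact extended to the pressure in Theorem~\ref{thm_varineq}). So the work is entirely in producing, for each $\eta>0$, an IMS $\mu_{\infty}$ for $f_{\infty}$ with $h_{\EC_{\Mis}}(f_{\infty};\mu_{\infty})\geq h_{\tp}(f_{\infty})-\eta$.

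First I would invoke the preceding theorem to fix an equi-conjugacy $\pi_{\infty}=(\pi_n)$ between $f_{\infty}$ and some autonomous $\CC^1$-expanding map $f$ (e.g.\ $f=f_0$ itself). Since equi-conjugacies preserve topological entropy, $h_{\tp}(f_{\infty})=h_{\tp}(f)$, and the classical variational principle for the expanding map $f$ supplies $f$-invariant Borel probability measures $\nu$ with $h_{\nu}(f)$ arbitrarily close to $h_{\tp}(f)$. Given such a $\nu$, define $\mu_n:=(\pi_n^{-1})_*\nu$. The relation $\pi_{n+1}\circ f_n=f\circ\pi_n$ together with $f_*\nu=\nu$ yields $(f_n)_*\mu_n=\mu_{n+1}$, so $\mu_{\infty}$ is indeed an IMS.

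Next, choose a finite Borel partition $\PC=\{P_1,\dots,P_k\}$ of $M$ with $h_{\nu}(f;\PC)\geq h_{\nu}(f)-\eta$, and set $\PC_n:=\pi_n^{-1}\PC$. The identity
\begin{equation*}
  \bigvee_{i=0}^{n-1}f_0^{-i}\PC_i \;=\; \pi_0^{-1}\Bigl(\bigvee_{i=0}^{n-1}f^{-i}\PC\Bigr)
\end{equation*}
(which follows from $\pi_i\circ f_0^i=f^i\circ\pi_0$) together with $\mu_0=(\pi_0^{-1})_*\nu$ gives $H_{\mu_0}(\bigvee_{i=0}^{n-1}f_0^{-i}\PC_i)=H_{\nu}(\bigvee_{i=0}^{n-1}f^{-i}\PC)$, hence $h(f_{\infty};\PC_{\infty};\mu_{\infty})=h_{\nu}(f;\PC)$. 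The remaining point is to verify $\PC_{\infty}\in\EC_{\Mis}$. Condition (i) is immediate from $\mu_n=(\pi_n^{-1})_*\nu$ and the regularity of $\nu$. Condition (ii) is the one place where equi-conjugacy (not mere pointwise conjugacy) is essential: shrinking the $P_j$ to compacta $K_j$ with mutual distance $\geq\delta_0$ in $M$, the uniform equicontinuity of the family $(\pi_n)$ yields a $\delta'>0$, independent of $n$, such that $d(x,y)<\delta'$ forces $d(\pi_n(x),\pi_n(y))<\delta_0$; thus the compact sets $\pi_n^{-1}(K_j)$ are $\delta'$-separated uniformly in $n$.

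Combining these pieces, $h_{\EC_{\Mis}}(f_{\infty};\mu_{\infty})\geq h(f_{\infty};\PC_{\infty};\mu_{\infty})=h_{\nu}(f;\PC)\geq h_{\nu}(f)-\eta$; taking $\nu$ with $h_{\nu}(f)\geq h_{\tp}(f)-\eta=h_{\tp}(f_{\infty})-\eta$ and sending $\eta\searrow 0$ finishes the proof. The main technical obstacle is precisely the uniform separation in condition (ii) of the class $\EC_{\Mis}$: this is where the equicontinuity of both $(\pi_n)$ and $(\pi_n^{-1})$ supplied by the equi-conjugacy theorem is used, and without it one would only obtain pointwise, rather than admissible, sequences of partitions.
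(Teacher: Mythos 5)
Your proposal is correct and follows essentially the same route as the paper: use the equi-conjugacy to transport $f$-invariant measures and partitions to an IMS and an admissible sequence of partitions for $f_{\infty}$, and invoke the classical variational principle for the autonomous expanding map $f$ together with the already-known inequality $h_{\EC_{\Mis}}\leq h_{\tp}$. The only cosmetic differences are the direction in which you orient the conjugacy maps and that you verify membership of $\PC_{\infty}$ in $\EC_{\Mis}$ directly via uniform equicontinuity, where the paper simply cites \cite[Prop.~27]{Kaw}; your verification is the right one.
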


\begin{proof}
The inequality ``$\leq$'' was proved in \cite[Thm.~28]{Kaw} or follows as a special case from Theorem \ref{thm_varineq}. The equi-conjugacy $\pi_{n+1} \circ f \equiv f_n \circ \pi_n$ given by the above theorem preserves the topological entropy, i.e., $h_{\tp}(f_{\infty}) = h_{\tp}(f)$. The map $f$ satisfies the classical variational principle $h_{\tp}(f) = \sup_{\mu}h_{\mu}(f)$, the supremum taken over all $f$-invariant probability measures $\mu$. For any such measure, $\mu_n := (\pi_n)_*\mu$ defines an IMS, i.e., $(f_n)_*\mu_n \equiv \mu_{n+1}$. If $\PC$ is a finite measurable partition of $M$, then $\PC_n := \pi_n\PC$, $n\in\N_0$, defines a sequence of partitions, which is contained in the admissible class $\EC_{\mathrm{M}}(\mu_{\infty})$ (cf.~\cite[Prop.~27]{Kaw}). This implies%
\begin{equation*}
  h_{\tp}(f) = h_{\tp}(f_{\infty}) \geq \sup_{\mu} h_{\EC_{\Mis}}(f_{\infty};\mu_{\infty}) \geq \sup_{\mu} h_{\mu}(f) = h_{\tp}(f),%
\end{equation*}
completing the proof.%
\end{proof}

\section{Expansivity}\label{sec_sue}%

In this section, we introduce an analogue of the notion of \emph{positive expansivity} for autonomous systems that is the topological counterpart to the expansivity property of the differentiable systems studied in the preceding sections.%

\subsection{Preliminary notions}%

We start by introducing some intuitive but preliminary notions of expansivity of increasing strength. Recall that a continuous map $f:X \rightarrow X$ on a metric space $X$ is called \emph{positively expansive} if there exists $\delta>0$ such that $d(f^i(x),f^i(y)) < \delta$ for all $i\geq0$ implies $x=y$.%

\begin{definition}
A topological NDS $(X_{\infty},f_{\infty})$ is called%
\begin{enumerate}
\item[(i)] \emph{time-$i$-expansive} with \emph{expansivity constant} $\delta>0$ if there exists $\delta>0$ such that for all $x,y\in X_i$ the following implication holds:%
\begin{equation*}
  \sup_{n\in\N_0}d_{n+i}(f_i^n(x),f_i^n(y)) < \delta \quad \Rightarrow \quad x=y;%
\end{equation*}
\item[(ii)] \emph{all-time expansive} if it is time-$i$-expansive for every $i\geq0$;%
\item[(iii)] \emph{uniformly expansive} if it is all-time expansive with a uniform expansivity constant $\delta$ for all of the systems $(X_{i,\infty},f_{i,\infty})$, $i\geq0$.%
\end{enumerate}
\end{definition}

\begin{remark}\
\begin{itemize}
\item In the case of an autonomous system, the notions of time-$i$-expansivity, all-time expansivity and uniform expansivity all coincide and are equivalent to positive expansivity.%
\item The concept of expansivity for nonautonomous systems introduced in \cite[Def.~2.2]{TDa} is equivalent to our notion of time-$1$-expansivity. However, while we allow a time-varying but at every time instant compact state space, the state space in \cite{TDa} is stationary and not necessarily compact.%
\end{itemize}
\end{remark}

The following examples show that the converse statements to the obvious implications%
\begin{equation*}
  \mbox{uniformly expansive} \quad\Rightarrow\quad \mbox{all-time expansive} \quad\Rightarrow\quad \mbox{time-$i$-expansive}%
\end{equation*}
fail to hold, and that an NDS built from positively expansive maps in general does not have any of these properties.%

\begin{example}
In general, the properties of time-$i$-expansivity and time-$j$-expansivity for $i<j$ are not related. Consider a system $(X_{\infty},f_{\infty})$ with $X_n \equiv \rmS^1$, $f_0(z) \equiv 1$ and $f_n(z) \equiv z^2$ (the angle-doubling map) for all $n\geq1$. This system is time-$i$-expansive for all $i\geq1$, but not time-$0$-expansive. Now consider a system $(X_{\infty},f_{\infty})$, where $X_0$ is finite and $X_n = [0,1]$ for all $n\geq1$. Let $f_0:X_0 \rightarrow X_1$ be any map and $f_n \equiv f$, $n\geq1$, for an arbitrary continuous map $f:[0,1]\rightarrow[0,1]$. The resulting system $(X_{\infty},f_{\infty})$ is obviously time-$0$-expansive, since there is a minimal positive distance for any two points in $X_0$, but not time-$i$-expansive for any $i\geq1$, following from the well-known fact that $[0,1]$ does not admit a positively expansive map.%
\end{example}

\begin{example}
A trivial example of an all-time but not uniformly expansive system is given as follows. Let each $X_n$ be a space consisting of precisely two points $x_1^{(n)},x_2^{(n)}$ such that $\diam(X_n) = d_n(x_1^{(n)},x_2^{(n)}) \rightarrow 0$ monotonically. Let $f_n$ be given by $f_n(x_i^{(n)}) = x_i^{(n+1)}$ for all $n$ and $i=1,2$. Then clearly $(X_{\infty},f_{\infty})$ is time-$i$-expansive with a maximal expansivity constant equal to $\diam(X_i)$. Since $\diam(X_i)$ is decreasing to $0$, the system is not uniformly expansive.%
\end{example}

\begin{example}
Consider the NDS $(X_{\infty},f_{\infty})$ with $X_n \equiv \rmS^1$ (endowed with the standard round metric $d_n \equiv d$ such that $\diam(\rmS^1)=1$) and let $f_n(z) \equiv z^{n+2}$ for each $n\geq0$. Although each $f_n$ is positively expansive, this system is not time-$i$-expansive for any $i$. One easily shows that $f_0^n(z) = z^{(n+1)!}$. Take $z,w\in\rmS^1$ with $d(z,w) = 1/(n+1)!$. Then%
\begin{equation*}
  d(f_0^i(z),f_0^i(w)) = \frac{(i+1)!}{(n+1)!} \leq \frac{1}{n+1} \mbox{\quad for\ } i=1,2,\ldots,n-1,%
\end{equation*}
and $f_0^n(z) = f_0^n(w)$. Hence, $(X_{\infty},f_{\infty})$ is not time-$0$-expansive, since for every $n$ we find $z^{(n)} \neq w^{(n)}$ with $d(f_0^k(z^{(n)}),f_0^k(w^{(n)})) \leq 1/n$ for all $k\geq0$. The same argument shows that $(X_{\infty},f_{\infty})$ is not time-$i$-expansive for any $i$.%
\end{example}

\begin{remark}
In Roy \cite{Ro2}, an example of two positively expansive maps $f,g:X\rightarrow X$ on a compact space $X$ is given such that the compositions $f\circ g$ and $g\circ f$ are not positively expansive. This also implies that the topological NDS $f_{\infty} = (f,g,f,g,\ldots)$ is not time-$i$-expansive for any $i$.%
\end{remark}

\begin{proposition}
Let $(X_{\infty},f_{\infty})$ be a topological NDS.%
\begin{enumerate}
\item[(i)] The properties of time-$i$-expansivity, all-time expansivity and uniform expansivity are preserved by equi-conjugacies.%
\item[(ii)] Assume that $(X_{\infty},f_{\infty})$ is time-$0$-expansive and the map $f_0$ is a surjective local homeomorphism. Then $(X_{\infty},f_{\infty})$ is time-$1$-expansive. Consequently, if all $f_n$ are surjective local homeomorphisms, time-$0$-expansivity is equivalent to all-time expansivity.%
\end{enumerate}
\end{proposition}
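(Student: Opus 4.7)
The plan for part (i) is to transport the expansivity property through the equi-conjugacy. Let $\pi_\infty$ be an equi-conjugacy from $(X_\infty,f_\infty)$ to $(Y_\infty,g_\infty)$ with $\pi_{n+1}\circ f_n\equiv g_n\circ\pi_n$, and assume that $(X_\infty,f_\infty)$ is time-$i$-expansive with constant $\delta$. Uniform equicontinuity of $(\pi_n^{-1})_{n\geq0}$ gives $\delta'>0$ such that $d_n(u,v)<\delta'$ implies $d_n(\pi_n^{-1}(u),\pi_n^{-1}(v))<\delta$ for every $n\geq0$. If $u,v\in Y_i$ satisfy $\sup_{n\geq0}d_{n+i}(g_i^n(u),g_i^n(v))<\delta'$, the identity $g_i^n=\pi_{n+i}\circ f_i^n\circ\pi_i^{-1}$ together with the choice of $\delta'$ gives $\sup_{n\geq0}d_{n+i}(f_i^n(\pi_i^{-1}(u)),f_i^n(\pi_i^{-1}(v)))<\delta$, forcing $\pi_i^{-1}(u)=\pi_i^{-1}(v)$ and hence $u=v$. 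Because $\delta'$ does not depend on $i$, the same argument simultaneously preserves all-time expansivity and uniform expansivity.

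The plan for part (ii) is to lift the asymptotic distance condition on $X_1$ back to $X_0$ along $f_0$ and then apply time-$0$-expansivity. The preliminary step is to establish a uniform covering-map structure: since $f_0:X_0\to X_1$ is a surjective local homeomorphism between compact metric spaces, it is a finite-sheeted covering map with uniformly separated fibres, and a Lebesgue-number type argument applied to a finite cover of $X_1$ by evenly covered open sets yields $\alpha>0$ such that every ball $B_\alpha(y)$, $y\in X_1$, is evenly covered. A further compactness argument allows me to shrink $\alpha$ so that each sheet of such an even covering has diameter less than $\delta$, where $\delta$ is the time-$0$-expansivity constant of $f_\infty$.

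Now set $\delta':=\min\{\alpha,\delta\}$ and take $y_1,y_2\in X_1$ with $\sup_{n\geq0}d_{n+1}(f_1^n(y_1),f_1^n(y_2))<\delta'$. By surjectivity of $f_0$, fix any $x_1\in f_0^{-1}(y_1)$. Since $d_1(y_1,y_2)<\alpha$, the unique sheet over $B_\alpha(y_1)$ containing $x_1$ contains a unique preimage $x_2$ of $y_2$, with $d_0(x_1,x_2)<\delta$ by the sheet-diameter bound. For $n\geq 1$ the relations $f_0^n(x_j)=f_1^{n-1}(y_j)$ give $\sup_{n\geq0}d_n(f_0^n(x_1),f_0^n(x_2))<\delta$, so time-$0$-expansivity forces $x_1=x_2$ and therefore $y_1=f_0(x_1)=f_0(x_2)=y_2$. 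The ``consequently'' clause follows by induction on $i$: time-$i$-expansivity of $(X_\infty,f_\infty)$ is precisely time-$0$-expansivity of the shifted system $(X_{i,\infty},f_{i,\infty})$, whose first map $f_i$ is by hypothesis a surjective local homeomorphism, so the argument just given promotes it to time-$(i+1)$-expansivity.

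The main technical obstacle is the uniform covering-map statement: producing a single $\alpha>0$ that simultaneously guarantees evenly covered $\alpha$-balls and sheet diameters bounded by $\delta$. Both the compactness of $X_0$ (for uniform local injectivity and finiteness of fibres) and of $X_1$ (for the Lebesgue-number argument) are essential here. Once this uniform structure is in place, the rest is a routine lifting computation.
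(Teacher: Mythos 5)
Your proposal is correct and follows essentially the same route as the paper: part (i) is the standard transport argument (which the paper leaves to the reader), and for part (ii) the paper likewise covers $X_1$ by finitely many open sets admitting uniformly continuous local inverse branches of $f_0$, takes a Lebesgue number, lifts the pair of $\delta'$-close orbits back one step, and applies time-$0$-expansivity. Your extra packaging via evenly covered balls and sheets of a covering map is harmless but not needed — one branch per element of the finite cover suffices, which is how the paper phrases it.
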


\begin{proof}
We leave the easy proof of (i) to the reader. To prove (ii), let $\delta$ be an expansivity constant for $(X_{\infty},f_{\infty})$. Since $f_0$ is a local homeomorphism, every $x\in X_0$ has an open neighborhood $V_x$ that is mapped homeomorphically onto an open set $W_x \subset X_1$. We choose $V_x$ such that $\cl V_x$ is contained in a bigger open neighborhood $\tilde{V}_x$ on which $f_0$ is still a homeomorphism. This implies that the local inverse maps are uniformly continuous. By surjectivity, the sets $\{W_x\}_{x\in X_0}$ form an open cover of $X_1$. Choose a finite subcover $\{W_1,\ldots,W_l\}$ and let $\rho$ be the Lebesgue number of this subcover. Let $f_{0,i}:V_i \rightarrow W_i$, $i=1,\ldots,l$, denote the corresponding local homeomorphisms. There exists a positive $\ep < \min\{\rho,\delta\}$ such that $d_1(x,y)<\ep$ for $x,y\in X_1$ implies $x,y\in W_i$ for some $i$ and $d_0(f_{0,i}^{-1}(x),f_{0,i}^{-1}(y)) < \delta$. Now consider $x,y\in X_1$ with $d_{n+1}(f_1^n(x),f_1^n(y)) < \ep$ for all $n\geq0$ and assume $x,y \in W_i$. Put $\tilde{x} := f_{0,i}^{-1}(x)$, $\tilde{y} := f_{0,i}^{-1}(y)$. This implies $d_n(f_0^n(\tilde{x}),f_0^n(\tilde{y})) < \delta$ for all $n\geq0$, and hence $\tilde{x} = \tilde{y}$ implying $x = y$. Consequently, time-$1$-expansivity holds with the expansivity constant $\ep$.%
\end{proof}

\subsection{Strong uniform expansivity}%

As it turns out, the notions of the preceding subsection are not sufficiently strong to imply analogues of the classical properties of positively expansive maps such as the existence of generators for topological entropy or the existence of equivalent metrics in which the maps $f_n$ locally uniformly expand distances. Hence, we introduce the following stronger notion.%

\begin{definition}
A topological NDS $(X_{\infty},f_{\infty})$ is called \emph{strongly uniformly expansive (s.u.e.)} if there exists a constant $\delta>0$ such that for every $\ep>0$ there is an integer $N\geq1$ satisfying%
\begin{equation}\label{eq_bbshrink}
  d_{i,N}(x,y) < \delta \quad\Rightarrow\quad d_i(x,y) < \ep%
\end{equation}
for all $i\geq0$ and $x,y\in X_i$. The constant $\delta$ is called an \emph{expansivity constant}.%
\end{definition} 

\begin{remark}
The definition says that Bowen-balls shrink to points uniformly w.r.t.~the initial time, when the order $N$ tends to infinity. The implication \eqref{eq_bbshrink} can also be written as%
\begin{equation*}
  B^N_i(x,\delta) \subset B_{\ep}(x;d_i).%
\end{equation*}
\end{remark}

\begin{remark}
A similar characterization of expansivity for time-dependent systems can be found in Roy \cite[Lem.~7]{Ro1}, where dynamical systems on fiber bundles are considered.%
\end{remark}

If the spaces $X_n$ become larger in diameter very rapidly, \emph{s.u.e.}~systems not necessarily exhibit the essential features of positively expansive maps on compact spaces, since the expansivity can just result in ``blowing up'' the space, rather than in producing complicated dynamical behavior. Hence, we need to introduce a property for the sequence $X_{\infty}$ which excludes such a behavior.%

\begin{definition}
A sequence $X_{\infty} = (X_n)_{n=0}^{\infty}$ of compact metric spaces is called \emph{uniformly totally bounded} if for every $\ep>0$ there exists an integer $m$ such that $m$ $\ep$-balls are sufficient to cover $X_n$ for each $n\geq0$.%
\end{definition}

The following proposition summarizes elementary properties of \emph{s.u.e.}~systems.%

\begin{proposition}
Assume that $(X_{\infty},f_{\infty})$ is \emph{s.u.e.}~with expansivity constant $\delta$. Then the following assertions hold:%
\begin{enumerate}
\item[(i)] $(X_{\infty},f_{\infty})$ is uniformly expansive with expansivity constant $\delta$.%
\item[(ii)] If $(Y_{\infty},g_{\infty})$ is another topological NDS that is equi-conjugate to the given one, then also $(Y_{\infty},g_{\infty})$ is \emph{s.u.e.}%
\item[(iii)] If $(X_{\infty},f_{\infty})$ is autonomous, then it is positively expansive. Conversely, any positively expansive autonomous system $(X,f)$ is \emph{s.u.e.}%
\item[(iv)] There exists $\AC_{\infty} \in \LC(X_{\infty})$, $\AC_{\infty} = (\AC_n)_{n=0}^{\infty}$, that generates the topological entropy, i.e.,%
\begin{equation*}
  h_{\tp}(f_{\infty}) = \limsup_{n\rightarrow\infty}\frac{1}{n}\log\NC\left(\bigvee_{i=0}^{n-1}f_0^{-i}\AC_i\right).%
\end{equation*}
If $X_{\infty}$ is uniformly totally bounded, then $h_{\tp}(f_{\infty})$ is finite.%
\item[(v)] If, for some $n$, $f_n$ is surjective and open, then $f_n$ is a covering map. If, additionally, $f_{\infty}$ is equicontinuous, then the number of leaves for such $f_n$ is uniformly bounded.%
\end{enumerate}
\end{proposition}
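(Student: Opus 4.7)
The plan is to handle the five items in order, each reducing to the defining inequality $d_{i,N}(x,y)<\delta \Rightarrow d_i(x,y)<\ep$ of \emph{s.u.e.}\ combined with a standard compactness argument where needed.

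For (i), simply note that $\sup_{n\geq 0} d_{n+i}(f_i^n(x),f_i^n(y)) < \delta$ implies $d_{i,N}(x,y) < \delta$ for every $N$, so \emph{s.u.e.}\ gives $d_i(x,y) < \ep$ for every $\ep>0$, forcing $x=y$; the constant $\delta$ works for every $i$. For (ii), the uniform moduli of continuity $\omega,\omega'$ of $(\pi_n),(\pi_n^{-1})$ give $d^Y_{i,N}(\pi_i(x),\pi_i(y)) \leq \omega(d^X_{i,N}(x,y))$ and the symmetric estimate, from which one reads off \emph{s.u.e.}\ on the $Y$-side with constant $\delta' := \omega'^{-1}(\delta)$ (for given $\ep$, pick $N$ on the $X$-side with target $\omega^{-1}(\ep)$). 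For (iii), the forward direction is immediate; conversely, suppose $(X,f)$ is positively expansive with constant $\delta$ and that \emph{s.u.e.}\ with $\delta/2$ fails. Then there are $\ep>0$, $N_k\to\infty$ and pairs $(x_k,y_k)$ with $d_{0,N_k}(x_k,y_k) < \delta/2$ and $d(x_k,y_k)\geq\ep$. Compactness extracts a subsequential limit $(x,y)$ with $d(x,y)\geq\ep$ and $d(f^j(x),f^j(y)) \leq \delta/2 < \delta$ for every $j$, contradicting positive expansivity.

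For (iv), choose $\AC_n$ to be the cover of $X_n$ by open $\delta/8$-balls centered at a maximal $\delta/16$-separated subset of $X_n$. Each $\AC_n$ is finite, its elements have diameter $\leq\delta/4$, and its Lebesgue number is $\geq\delta/16$ uniformly in $n$, so $\AC_{\infty}\in\LC(X_{\infty})$. Given any $\UC_{\infty}\in\LC(X_{\infty})$ with Lebesgue numbers bounded below by $\rho>0$, apply \emph{s.u.e.}\ with $\ep=\rho/2$ to obtain $N$ such that $d_{i,N}(x,y)<\delta \Rightarrow d_i(x,y)<\rho/2$. Every atom of $\bigvee_{j=0}^{N} f_i^{-j}\AC_{i+j}$ has $d_{i,N}$-diameter $\leq\delta/4 < \delta$, hence $d_i$-diameter $<\rho$, so lies inside some element of $\UC_i$. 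Consequently $\bigvee_{i=0}^{n-1}f_0^{-i}\UC_i$ is refined by $\bigvee_{i=0}^{n+N-1}f_0^{-i}\AC_i$; dividing by $n$ and taking $\limsup$ yields $h_{\tp}(f_{\infty};\UC_{\infty}) \leq h_{\tp}(f_{\infty};\AC_{\infty})$, so $\AC_{\infty}$ is a generator. If $X_{\infty}$ is uniformly totally bounded, then $\#\AC_n$ is bounded by some $m$ uniformly in $n$, so $\NC(\bigvee_{i=0}^{n-1}f_0^{-i}\AC_i)\leq m^n$ and $h_{\tp}(f_{\infty})\leq\log m$.

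For (v), first observe that for distinct $x \neq x'$ in $f_n^{-1}(y)$ one has $f_n^j(x) = f_n^j(x')$ for every $j\geq 1$, so $d_{n,N}(x,x') = d_n(x,x')$ for all $N$; \emph{s.u.e.}\ then forces $d_n(x,x')\geq\delta$. Thus fibers of $f_n$ are finite and $\delta$-separated, and $f_n$ is injective on every $\delta/2$-ball. Together with continuity, openness, surjectivity and compactness, this recovers an evenly covered neighborhood of any $y\in X_{n+1}$: pick pairwise disjoint open $V_i\subset B_{\delta/2}(x_i)$ on which $f_n$ is a homeomorphism and set $V:=\bigcap_i f_n(V_i)$. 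Under equicontinuity, choose $\eta\in(0,\delta/2)$ with $d_n(u,v)<\eta \Rightarrow d_{n+1}(f_n(u),f_n(v)) < \delta/4$ uniformly in $n$; the disjoint $\eta$-balls about the preimages of any $y$ all map into $B_{\delta/4}(y)$ via local homeomorphisms, and a packing argument exploiting these local inverses bounds the number of preimages by a constant depending only on $\delta,\eta$.

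The most delicate step is the uniform leaf-count in (v): one must convert the equicontinuity modulus into a genuine upper bound on the number of $\delta$-separated points of $X_n$ mapping to a single point of $X_{n+1}$, and this must hold uniformly in $n$ without invoking uniform total boundedness of $X_{\infty}$. The other parts, while intricate, are largely mechanical consequences of the defining inequality of \emph{s.u.e.}\ together with compactness.
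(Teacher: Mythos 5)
Parts (i)--(iv) and the covering-map claim in (v) are correct and essentially coincide with the paper's arguments: (i) and (iii) are verbatim the paper's compactness/contradiction proofs, (ii) is the paper's chase through the conjugacy relation $f_i^j\circ\pi_i^{-1}=\pi_{i+j}^{-1}\circ g_i^j$ written with moduli of continuity, and (iv) differs only cosmetically (you take a finite cover by $\delta/8$-balls centered at a separated net, the paper takes \emph{all} $\delta/2$-balls and passes through Bowen-ball covers; both give $\UC_i\prec\AC_i^{\langle N\rangle}$ and hence the generator property, and the $\log m$ bound under uniform total boundedness). Your observation that fibers of $f_n$ are $\delta$-separated, hence $f_n$ is injective on sets of diameter $<\delta$, is the right starting point for (v); the only small omission in the evenly-covered-neighborhood construction is that $V:=\bigcap_i f_n(V_i)$ must additionally be shrunk by removing the closed set $f_n(X_n\setminus\bigcup_i V_i)$ to ensure no stray preimages, a standard step.

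The genuine gap is the uniform leaf count in (v), exactly where you yourself flag delicacy. Your ``packing argument'' is never carried out, and I do not see how to carry it out from the stated hypotheses: equicontinuity of $f_\infty$ bounds how much $f_n$ can \emph{expand} distances (images of $\eta$-balls land in $\delta/4$-balls), but the leaf count requires a bound on how much $f_n$ can \emph{collapse}, i.e., on the number of pairwise disjoint $\eta$-balls of $X_n$ that can be mapped into a single small ball of $X_{n+1}$. Without a uniform bound on the $\eta$-capacity of the spaces $X_n$ (which is precisely uniform total boundedness, not assumed in (v)), no packing bound is available. Indeed the claim appears to need such an extra hypothesis: take $X_n=(\Z_2,c_n|\cdot|_2)$ with $f_n=\sigma^{m_n}$ the $m_n$-fold digit shift, $c_{n+1}=c_n2^{1-m_n}$ and $c_n\ge 2^{m_n-1}$; one checks this system is \emph{s.u.e.}~with $\delta=1$, each $f_n$ is $2$-Lipschitz (so $f_\infty$ is equicontinuous), open and surjective, yet the leaf numbers $2^{m_n}$ can be made unbounded by interspersing long stretches of identity maps (during which $c_n$ doubles) with occasional large shifts. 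Under uniform total boundedness the statement is immediate from your $\delta$-separation of fibers, so the honest fix is either to add that hypothesis or to supply the missing argument explicitly. To be fair, the paper itself writes ``we omit the details of the proof'' for all of (v), so there is nothing there to compare your sketch against; but as a self-contained proof, this step is not established.
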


\begin{proof}
(i) Assume that two points $x,y\in X_i$ satisfy%
\begin{equation*}
  d_{i+n}(f_i^n(x),f_i^n(y)) < \delta \mbox{\quad for all\ } n\geq0.%
\end{equation*}
This is equivalent to $d_{i,N}(x,y) < \delta$ for all $N$. Hence, for every $\ep>0$ we have $d_i(x,y)<\ep$, so $x=y$.%

(ii) Assume that $(X_{\infty},f_{\infty})$ is \emph{s.u.e.}~and denote by $\pi_{\infty} = (\pi_n)_{n=0}^{\infty}$ the equi-conjugacy ($\pi_{n+1} \circ f_n = g_n \circ \pi_n$). Let $\delta>0$ be the expansivity constant for $(X_{\infty},f_{\infty})$ and choose $\tilde{\delta} = \tilde{\delta}(\delta)$ according to the uniform equicontinuity of the family $\{\pi_n^{-1}\}_{n=0}^{\infty}$. Let $\tilde{\ep}>0$ be given and choose $\ep = \ep(\tilde{\ep})$ according to the uniform equicontinuity of the family $\{\pi_n\}_{n=0}^{\infty}$. Then choose $N = N(\ep)$ according to the \emph{s.u.e.}~property of $(X_{\infty},f_{\infty})$. Assuming $d_{i,N}^Y(y_1,y_2) < \tilde{\delta}$, we obtain%
\begin{eqnarray*}
  d_{i,N}^X\left(\pi_i^{-1}(y_1),\pi_i^{-1}(y_2)\right) &=& \max_{0\leq j\leq N}d_{i+j}^X\left(f_i^j(\pi_i^{-1}(y_1)),f_i^j(\pi_i^{-1}(y_2))\right)\\
	                                                      &=& \max_{0\leq j\leq N}d_{i+j}^X\left(\pi_{i+j}^{-1}(g_i^j(y_1)),\pi_{i+j}^{-1}(g_i^j(y_2))\right) < \delta,%
\end{eqnarray*}
and hence $d_i^X(\pi_i^{-1}(y_1),\pi_i^{-1}(y_2)) < \ep$, implying $d_i^Y(y_1,y_2) < \tilde{\ep}$. This shows that $(Y_{\infty},g_{\infty})$ is \emph{s.u.e.}

(iii) It is clear that the \emph{s.u.e.}~property implies positive expansivity. Conversely, assume that $(X,f)$ is positively expansive with expansivity constant $\delta$ and suppose to the contrary that there is $\ep>0$ such that for every $N$ there are $x_N,y_N \in X$ with%
\begin{equation*}
  \max_{0\leq j \leq N}d(f^j(x_N),f^j(y_N)) < \frac{\delta}{2} \mbox{\quad and\quad } d(x_N,y_N) \geq \ep.%
\end{equation*}
We may assume $x_N \rightarrow x$ and $y_N \rightarrow y$ in the compact space $X$. Then%
\begin{equation*}
  \sup_{j \in \N_0} d(f^i(x),f^j(y)) \leq \frac{\delta}{2} < \delta \mbox{\quad and\quad } d(x,y) \geq \ep,%
\end{equation*}
contradicting positive expansivity with expansivity constant $\delta$.%

(iv) Let $\delta$ be the expansivity constant and $\AC_n$ be the family of all open $\delta/2$-balls in $X_n$. Pick an arbitrary $\UC_{\infty} \in \LC(X_{\infty})$ and let $\rho>0$ be a common lower bound for its associated Lebesgue numbers. We choose $n$ with%
\begin{equation*}
  d_{i,n}(x,y) < \delta \quad\Rightarrow\quad d_i(x,y) < \rho.%
\end{equation*}
If $x,y \in \bigcap_{j=0}^n f_i^{-j}\AC_{i+j}$, then $d_{i+j}(f_i^j(x),f_i^j(y)) < \delta$ for $i=0,\ldots,n$, implying that $\AC^{\langle n\rangle}_i = \bigvee_{j=0}^n f_i^{-j}\AC_{i+j}$ is a refinement of the family $\BC^n_i$ of all Bowen-balls of order $n$ and radius $\delta$ in $X_i$. Moreover, by the choice of $n$, $\BC^n_i$ is finer than $\UC_i$ for every $i\geq0$, implying%
\begin{equation*}
  h_{\tp}(f_{\infty};\UC_{\infty}) \leq h_{\tp}(f_{\infty};\BC_{\infty}^n) 
	            \leq h_{\tp}(f_{\infty};\AC_{\infty}^{\langle n\rangle}) = h_{\tp}(f_{\infty};\AC_{\infty}),%
\end{equation*}
where the last equality is easy to see. This proves the first assertion. Now assume that $X_{\infty}$ is uniformly totally bounded. Then we can choose a generator $\AC_{\infty}$ such that $\AC_n$ consists of $m$ $\delta/2$-balls for each $n$ ($m = m(\delta)$). This easily implies $h_{\tp}(f_{\infty}) \leq \log m$.%

(v) From expansivity it follows that the maps $f_n$ are locally injective. Then, if $f_n$ is additionally open and onto, it is a local homeomorphism. This easily implies that $f_n$ is a covering map. We omit the details of the proof.%
\end{proof}

The following example shows that a uniformly expansive system is not necessarily \emph{s.u.e.}, even if the sequence $X_{\infty}$ is stationary.%

\begin{example}\label{ex_sue_unifexp}
For any map $f$ and $n\in\N$, we write $(f)_n$ for the finite sequence $(f,f,\ldots,f)$ of length $n$. We let $f(z) \equiv z^2$, $f:\rmS^1 \rightarrow \rmS^1$, be the angle-doubling map on the unit circle and consider the NDS $(X_{\infty},f_{\infty})$ defined by%
\begin{equation*}
  X_n :\equiv \rmS^1,\quad f_{\infty} := ( (\id_{\rmS^1})_1, (f)_1, (\id_{\rmS^1})_2, (f)_2, (\id_{\rmS^1})_3, (f)_3, \ldots ).%
\end{equation*}
If we consider on $\rmS^1$ the standard round metric with $\diam(\rmS^1) = 1$, this system is uniformly expansive with expansivity constant $1/2$, since for any two points $z,w \in \rmS^1$ with distance smaller than $1/2$, the application of $f_0,f_1,f_2,\ldots$ will finally double the angle sufficiently many times so that $d(f_0^n(z),f_0^n(w)) > 1/2$. However, the system is not \emph{s.u.e.}, because for every $\delta>0$,%
\begin{equation*}
  B_{2(1+2+\cdots+k)}^n(x,\delta) = B_{\delta}(x) \mbox{\quad for all\ } x \in X_{2(1+2+\cdots+k)},\ 1 \leq n \leq k+1.%
\end{equation*}
Hence, for a given $\ep \in (0,\delta)$ no uniform $N$ exists so that all Bowen-balls of order $N$ are contained in an $\ep$-ball.%
\end{example}

The next proposition shows that expanding systems are \emph{s.u.e.}%

\begin{proposition}
Every NDS $(M,f_{\infty})$ with $f_n \in \EC(\lambda,\Gamma)$ is \emph{s.u.e.}%
\end{proposition}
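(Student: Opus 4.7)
The plan is to exploit Lemma \ref{lem2} (the uniform radius $\rho>0$ of even covering) together with the $\lambda^{-1}$-contraction of inverse branches, to show that Bowen balls of large order and fixed small radius shrink geometrically, independently of the initial time $i$.

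First I would fix $\rho>0$ as in Lemma \ref{lem2}, so that every $\rho$-ball in $M$ is evenly covered by each $f_n$ and all branches of the inverse maps are contractions. The quantitative refinement I need is that these inverse branches are $\lambda^{-1}$-Lipschitz: since $f_n$ expands tangent vectors by a factor of at least $\lambda$, the derivative of any local inverse has operator norm at most $\lambda^{-1}$. Because a minimal geodesic between two points of $B_\rho(y)$ has length less than $2\rho$ and starts inside $B_\rho(y)$, it stays inside the evenly covered region, so parametrizing it and applying the chain rule yields
\begin{equation*}
  d(g(z_1),g(z_2)) \leq \lambda^{-1} d(z_1,z_2)
\end{equation*}
for every inverse branch $g$ on $B_\rho(y)$.

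Next, I take $\delta := \rho$ as a candidate expansivity constant. Given $\ep>0$, choose $N\geq 1$ with $\lambda^{-N}\delta < \ep$. Suppose $x,y\in M$ satisfy $d_{i,N}(x,y) < \delta$, that is $d(f_i^j(x),f_i^j(y))<\delta$ for all $j=0,1,\ldots,N$. I will iterate the following pullback step for $j=N,N-1,\ldots,1$: the ball $B_\rho(f_i^j(x))$ is evenly covered by $f_{i+j-1}$, and contains $f_i^j(y)$. Let $g$ denote the inverse branch of $f_{i+j-1}$ sending $f_i^j(x)$ to $f_i^{j-1}(x)$; its image is $B_\rho(f_i^{j-1}(x))$. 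Since $d(f_i^{j-1}(x),f_i^{j-1}(y)) < \delta\leq\rho$, the point $f_i^{j-1}(y)$ lies in this image, and since preimages in different branches are disjoint, $g(f_i^j(y)) = f_i^{j-1}(y)$. The contraction estimate then gives
\begin{equation*}
  d(f_i^{j-1}(x),f_i^{j-1}(y)) \leq \lambda^{-1} d(f_i^j(x),f_i^j(y)).
\end{equation*}
Iterating from $j=N$ down to $j=1$ yields $d(x,y) \leq \lambda^{-N}d(f_i^N(x),f_i^N(y)) < \lambda^{-N}\delta < \ep$, which is exactly what strong uniform expansivity requires, with a constant $N=N(\ep)$ that does not depend on $i$.

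The main obstacle is the bookkeeping in the pullback step, specifically the identification $g(f_i^j(y)) = f_i^{j-1}(y)$. It needs the hypothesis $d(f_i^{j-1}(x),f_i^{j-1}(y))<\delta\leq\rho$ at \emph{every} intermediate time $j$, which is precisely why the argument uses the full Bowen condition $d_{i,N}(x,y)<\delta$ rather than just smallness of $d(f_i^N(x),f_i^N(y))$. Everything else, namely the existence of $\rho$, even covering, and the contraction factor, is already supplied by Lemma \ref{lem2}.
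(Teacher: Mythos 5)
Your proof is correct and takes essentially the same route as the paper: Lemma \ref{lem2} plus the $\lambda^{-1}$-Lipschitz inverse branches, pulling the Bowen condition back one step at a time (the paper just cites the containment $B_i^n(x,\delta)\subset (f_i)_x^{-1}\circ\cdots\circ(f_{i+n-1})_{f_i^{n-1}(x)}^{-1}(B_{\delta}(f_i^n(x)))$ already established in the proof of the volume lemma). One small inaccuracy: the image of the branch $g$ on $B_{\rho}(f_i^j(x))$ is a leaf contained in $B_{\rho/\lambda}(f_i^{j-1}(x))$, not all of $B_{\rho}(f_i^{j-1}(x))$, so $f_i^{j-1}(y)$ need not lie in it a priori; the identification $g(f_i^j(y))=f_i^{j-1}(y)$ instead follows because any other preimage of $f_i^j(y)$ is at distance at least $2\rho$ from $g(f_i^j(y))$ and hence at distance greater than $\rho$ from $f_i^{j-1}(x)$, whereas $d(f_i^{j-1}(x),f_i^{j-1}(y))<\rho$.
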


\begin{proof}
By Lemma \ref{lem2}, there exists $\delta>0$ so that every $\delta$-ball in $M$ is evenly covered by each $f_n$ and every branch of the inverse map is a contraction with uniform contraction constant $\mu\in(0,1)$. Moreover, the proof of the volume lemma shows that each Bowen-ball $B^n_i(x,\delta)$ is contained in a set of the form%
\begin{equation*}
  (f_i)_x^{-1} \circ \cdots \circ (f_{i+n-1})_{f_i^{n-1}(x)}^{-1}(B_{\delta}(f_i^n(x))).%
\end{equation*}
Hence, if $d_{i,n}(x,y) < \delta$, then $d(x,y) < \mu^{-n}\delta$. Choosing $n=n(\ep)$ with $\mu^{-n} \leq \ep/\delta$ yields the assertion.%
\end{proof}

It is well-known that a homeomorphism of a compact space $X$ is positively expansive iff $X$ is finite (see \cite{RWi} for an elementary proof). For an \emph{s.u.e.}~system, in general, an analogous result does not hold. An example is constructed as follows. Let $A:\R^n \rightarrow \R^n$ be a linear map all of whose eigenvalues have moduli greater than $1$. Let $X_0$ be the compact unit ball in $\R^d$ and put $X_n := A^nX_0$ for all $n\geq1$ (endowed with the restriction of the standard Euclidean metric). Then $f_n := A|_{X_n}:X_n\rightarrow X_{n+1}$ defines an NDS, which is \emph{s.u.e.}, and every $f_n$ is a homeomorphism. However, if we assume that $X_{\infty}$ is uniformly totally bounded, we can prove an analogous result.%

We define the diameter of a cover $\UC$ of a compact metric space as%
\begin{equation*}
  \diam(\UC) := \sup_{U\in\UC}\diam(U).%
\end{equation*}
The following lemma is a generalization of \cite[Thm., p.~316]{Aea}.%

\begin{lemma}\label{lem_diam}
Let $(X_{\infty},f_{\infty})$ be a topological NDS such that $X_{\infty}$ is uniformly totally bounded and $\#X_0=\infty$. Then for every $\UC_{\infty}\in\LC(X_{\infty})$, $\diam\left(\bigvee_{i=1}^n f_k^{-i}\UC_{k+i}\right)$ is not converging to zero uniformly in $k$.%
\end{lemma}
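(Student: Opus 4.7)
I proceed by contradiction. Suppose that for every $\ep > 0$ there is $N(\ep) \in \N$ such that
\begin{equation*}
\diam\left(\bigvee_{i=1}^{N(\ep)} f_k^{-i}\UC_{k+i}\right) < \ep
\end{equation*}
for all $k \geq 0$, and aim to contradict $\#X_0 = \infty$.

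First I reduce to subcovers of uniformly bounded cardinality. Let $\mu > 0$ be a common lower bound for the Lebesgue numbers of $\UC_\infty$, which exists since $\UC_\infty \in \LC(X_\infty)$. By uniform total boundedness, each $X_n$ is covered by $M := m(\mu/2)$ balls of radius $\mu/2$; each such ball lies in some element of $\UC_n$ by the Lebesgue-number property, yielding a subcover $\UC_n' \subseteq \UC_n$ with $|\UC_n'| \leq M$ whose own Lebesgue number is still bounded below by $\mu/2$. Passing to $\UC_\infty'$ only refines the joins, so the uniform diameter hypothesis persists.

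Next I observe that each $f_k$ is injective. Indeed, if $f_k(x) = f_k(y)$ then $f_k^i(x) = f_k^i(y)$ for every $i \geq 1$, so $x$ and $y$ share every itinerary and lie in a common element of $\bigvee_{i=1}^{N} f_k^{-i}\UC_{k+i}'$ for every $N$; the hypothesis then forces $d_k(x,y) < \ep$ for every $\ep > 0$. Consequently each $f_0^n$ is a continuous injection from compact $X_0$ into the Hausdorff space $X_n$, hence a topological embedding.

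For the core step I exploit $\#X_0 = \infty$: by compactness there exist $p \in X_0$ and a sequence $y_n \to p$ with $y_n \neq p$. Setting $\ep_n := d_0(y_n,p) > 0$ and invoking the hypothesis with $\ep_n/2$, the points $y_n$ and $p$ cannot share an element of the level-$N(\ep_n/2)$ join at $k=0$, so there exists $i_n \in \{1,\ldots,N(\ep_n/2)\}$ with $f_0^{i_n}(y_n)$ and $f_0^{i_n}(p)$ lying in no common element of $\UC_{i_n}'$; the Lebesgue-number property of $\UC_{i_n}'$ then gives $d_{i_n}(f_0^{i_n}(y_n), f_0^{i_n}(p)) \geq \mu/2$. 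Since $f_0^i$ is continuous for each fixed $i$, the indices $i_n$ must tend to $\infty$.

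My plan for closing the contradiction is to iterate the above step using the uniform-in-$k$ hypothesis: applying it at $k = i_n$ to the $\mu/2$-separated pair $(f_0^{i_n}(y_n), f_0^{i_n}(p))$ produces further separation times, yielding a strictly increasing sequence of instants at which the orbits of $y_n$ and $p$ stay macroscopically apart. Combined with the uniform alphabet bound $|\UC_k'| \leq M$ from the first step, a pigeonhole/diagonal extraction across the finite alphabets should produce two distinct accumulation points in $X_0$ that share an entire itinerary in $\prod_i \UC_i'$, contradicting the consequence of the diameter hypothesis that same-itinerary points must be forced within $\ep$. The principal obstacle is exactly this final combinatorial step: one has to track the unbounded growth of the separation times while extracting a coherent infinite itinerary, and this is where uniform total boundedness of $X_\infty$ is indispensable, precisely because it furnishes the finite-alphabet bound $M$ that makes the pigeonhole argument terminate.
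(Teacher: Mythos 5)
Your first three steps are sound (the reduction to subcovers of cardinality at most $M$, the injectivity of each $f_k$, and the fact that the first separation time $i_n$ of $y_n$ from $p$ tends to infinity), but the proof is not complete: the contradiction is supposed to come from the final ``pigeonhole/diagonal extraction'' step, which you only sketch as a plan and yourself flag as the principal obstacle. As written there is no argument producing the two distinct accumulation points sharing an entire itinerary, and it is not clear how such an extraction would terminate: the number of itineraries of length $n$ over an alphabet of size $M$ is $M^{n}$, which is unbounded, so a naive pigeonhole across growing separation times does not close. The missing idea is precisely \emph{how} the uniform-in-$k$ hypothesis collapses this count, and that is the actual content of the lemma.

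The paper's proof supplies exactly that idea and is considerably more direct. Once $n\geq N$ is large enough that $\diam\bigl(\bigvee_{i=1}^{n}f_k^{-i}\UC_{k+i}\bigr)<\delta$ for \emph{all} $k$, where $\delta$ is the common Lebesgue bound, the cover $\UC_k$ is coarser than $\bigvee_{i=1}^{n}f_k^{-i}\UC_{k+i}$; hence $\NC\bigl(\bigvee_{i=0}^{n}f_0^{-i}\UC_i\bigr)=\NC\bigl(\bigvee_{i=1}^{n}f_0^{-i}\UC_i\bigr)\leq\NC\bigl(\bigvee_{i=0}^{n-1}f_1^{-i}\UC_{i+1}\bigr)$, and iterating this ``shift'' $n-N$ times gives $\NC\bigl(\bigvee_{i=0}^{n}f_0^{-i}\UC_i\bigr)\leq\NC\bigl(\bigvee_{i=0}^{N}f_{n-N}^{-i}\UC_{n-N+i}\bigr)\leq m(\delta)^{N+1}=:M'$ for all $n\geq N$ --- only the last $N+1$ covers ever matter, which is where uniform total boundedness enters. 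Since the diameters of these joins tend to zero while their covering numbers stay bounded by $M'$, they cannot cover $M'+1$ distinct points of $X_0$, contradicting $\#X_0=\infty$. You should replace your final step by this covering-number estimate; note that your steps on injectivity and on the separation times, while correct, then become unnecessary.
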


\begin{proof}
Let $\delta$ be a common lower bound for the Lebesgue numbers of $\UC_{\infty}$. Suppose to the contrary that $\diam(\bigvee_{i=1}^n f_k^{-i}\UC_{k+i}) \rightarrow 0$ for $n\rightarrow\infty$ uniformly in $k$. There exists $N\in\N$ such that for all $n\geq N$ and $k\geq0$, $\diam(\bigvee_{i=1}^n f_k^{-i}\UC_{k+i})<\delta$. Then $\UC_k$ is coarser than $\bigvee_{i=1}^n f_k^{-i}\UC_{k+i}$, $n\geq N$, $k\geq0$, implying%
\begin{eqnarray*}
  \NC\left(\bigvee_{i=0}^n f_0^{-i}\UC_i\right) &=& \NC\left(\bigvee_{i=1}^n f_0^{-i}\UC_i\right) = \NC\left(\bigvee_{i=0}^{n-1} f_0^{-(i+1)}\UC_{i+1}\right)\\
																								&=& \NC\left(f_0^{-1}\bigvee_{i=0}^{n-1}f_1^{-i}\UC_{i+1}\right) \leq \NC\left(\bigvee_{i=0}^{n-1}f_1^{-i}\UC_{i+1}\right).%
\end{eqnarray*}
By induction, one shows that for all $n\geq N$,%
\begin{equation*}
  \NC\left(\bigvee_{i=0}^n f_0^{-i}\UC_i\right) \leq \NC\left(\bigvee_{i=0}^Nf_{n-N}^{-i}\UC_{n-N+i}\right).%
\end{equation*}
We can estimate the right-hand side by%
\begin{eqnarray*}
  \NC\left(\bigvee_{i=0}^Nf_{n-N}^{-i}\UC_{n-N+i}\right) &\leq& \prod_{i=0}^N\NC\left(f_{n-N}^{-i}\UC_{n-N+i}\right)\\
	&\leq& \prod_{i=0}^N \NC\left(\UC_{n-N+i}\right) \leq \prod_{i=0}^N m(\delta) =: M,%
\end{eqnarray*}
where $m(\delta)$ denotes the number of $\delta$-balls needed to cover the spaces $X_k$. Hence,%
\begin{equation*}
  \NC\left(\bigvee_{i=0}^nf_0^{-i}\UC_i\right) \leq M \mbox{\quad for all\ } n\geq N.%
\end{equation*}
Choose $M+1$ distinct points $x_1,\ldots,x_{M+1}$ in $X_0$ and let $n$ be so large that $\diam(\bigvee_{i=0}^n f_0^{-i}\UC_i) < \min_{1 \leq i < j \leq M+1}d_0(x_i,x_j)$. This is a contradiction, because to cover $\{x_1,\ldots,x_{M+1}\}$ with sets whose diameters are smaller than $\min_{1 \leq i < j \leq M+1}d_0(x_i,x_j)$ requires at least $M+1$ sets.%
\end{proof}

\begin{theorem}\label{thm_finitespace}
Assume that $(X_{\infty},f_{\infty})$ is \emph{s.u.e.}, $X_{\infty}$ is uniformly totally bounded and every $f_n$ is a homeomorphism so that the family $\{f_n^{-1}\}_{n\geq0}$ is uniformly equicontinuous. Then $X_0$ and hence every $X_n$ is finite.%
\end{theorem}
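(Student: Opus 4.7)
The plan is to argue by contradiction using Lemma \ref{lem_diam}: assuming $X_0$ is infinite, I will exhibit a sequence $\UC_{\infty}\in\LC(X_{\infty})$ for which $\diam\left(\bigvee_{i=1}^n f_k^{-i}\UC_{k+i}\right)$ converges to zero uniformly in $k$, which the lemma forbids.

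First, fix an expansivity constant $\delta$ for the \emph{s.u.e.} system and let $\omega:(0,\infty)\to(0,\infty)$ be a common modulus of continuity for the family $\{f_n^{-1}\}_{n\geq 0}$, so that $\omega(r)\to 0$ as $r\to 0$ and $d_{n+1}(p,q)<r$ implies $d_n(f_n^{-1}(p),f_n^{-1}(q))<\omega(r)$ for all $n\geq 0$. Choose $\rho>0$ small enough that $\max\{2\rho,\omega(2\rho)\}<\delta$, and take $\UC_n$ to be the cover of $X_n$ by all open $\rho$-balls. Each $\UC_n$ has Lebesgue number at least $\rho$, since any set of diameter less than $\rho$ sits inside the $\rho$-ball around any of its points; hence $\UC_{\infty}\in\LC(X_{\infty})$.

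Now, given $\ep>0$, let $N=N(\ep)$ be furnished by the \emph{s.u.e.} property, so that $d_{k,N}(x,y)<\delta$ implies $d_k(x,y)<\ep$ for every $k\geq 0$. For $n\geq N$, $k\geq 0$, and any element $C=\bigcap_{i=1}^n f_k^{-i}(B_\rho(y_i;d_{k+i}))$ of $\bigvee_{i=1}^n f_k^{-i}\UC_{k+i}$, two points $x,y\in C$ satisfy $d_{k+i}(f_k^i(x),f_k^i(y))<2\rho$ for $i=1,\ldots,n$. The key step is to also control $d_k(x,y)$ itself, which I extract from the case $i=1$ together with the uniform equicontinuity of $f_k^{-1}$: one obtains $d_k(x,y)<\omega(2\rho)$. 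By the choice of $\rho$, this yields $d_{k,n}(x,y)<\delta$, and hence $d_k(x,y)<\ep$ by \emph{s.u.e.} Therefore $\diam\left(\bigvee_{i=1}^n f_k^{-i}\UC_{k+i}\right)\leq\ep$ uniformly in $k$ for $n\geq N$, contradicting Lemma \ref{lem_diam}. Finally, since each $f_n$ is a homeomorphism, every $f_0^n:X_0\to X_n$ is a bijection, so finiteness of $X_0$ propagates to every $X_n$.

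The main obstacle is the off-by-one in the wedge product of Lemma \ref{lem_diam}: it starts at $i=1$ rather than $i=0$, so intersecting preimages of $\rho$-balls under $f_k^i$ for $i\geq 1$ does not a priori control how close $x$ and $y$ are already as points of $X_k$. This is exactly where the assumption that $\{f_n^{-1}\}$ is uniformly equicontinuous is used: it lets me pull $d_{k+1}$-proximity back to $d_k$-proximity with a modulus independent of $k$, without which the desired uniform diameter decay could not be deduced from \emph{s.u.e.} alone.
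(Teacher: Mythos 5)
Your proof is correct and follows essentially the same route as the paper: both reduce the statement to Lemma \ref{lem_diam} by showing that the \emph{s.u.e.}~property forces $\diam\bigl(\bigvee_{i=1}^n f_k^{-i}\UC_{k+i}\bigr)\rightarrow 0$ uniformly in $k$, contradicting the lemma when $X_0$ is infinite. The only (minor) difference is how the missing $i=0$ term of the join is recovered: the paper pushes the $\delta$-ball covers forward by one step, setting $\BC_n := f_n\AC_n$ and using the uniform equicontinuity of $\{f_n^{-1}\}$ to keep the Lebesgue numbers bounded below, whereas you keep the $\rho$-ball covers and pull the $i=1$ proximity back through $f_k^{-1}$ --- both devices invoke the uniform equicontinuity of the inverses at exactly the same point.
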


\begin{proof}
Let $\AC_n$ be the cover of $X_n$ consisting of all $\delta$-balls, where $\delta$ is an expansivity constant. Then%
\begin{equation*}
  \diam\left(\bigvee_{j=0}^{n-1}f_i^{-j}\AC_{i+j}\right) \rightarrow 0 \mbox{\quad as\ } n \rightarrow \infty%
\end{equation*}
uniformly in $i$. Since each $f_n$ is a homeomorphism, we obtain a sequence of open covers for $X_{1,\infty}$ by putting $\BC_n := f_n\AC_n$, $n\geq0$. From the equicontinuity of $\{f_n^{-1}\}$ it follows that $\BC_{\infty}\in\LC(X_{\infty})$. Then%
\begin{equation*}
  \bigvee_{j=0}^{n-1}f_i^{-j}\AC_{i+j} = \bigvee_{j=0}^{n-1}f_i^{-j}f_{i+j}^{-1}\BC_{i+j}
	                                     = \bigvee_{j=0}^{n-1}f_i^{-(j+1)}\BC_{i+j} = \bigvee_{j=1}^n f_i^{-j}\BC_{i+j-1}.%
\end{equation*}
By Lemma \ref{lem_diam}, $\diam(\bigvee_{j=1}^n f_i^{-j}\BC_{i+j-1})$ does not converge uniformly to zero if $X_0$ is infinite. Hence, $X_0$ must be finite.%
\end{proof}

Another classical result asserts that a positively expansive map becomes expanding in a suitably chosen metric. This was proved by Reddy \cite{Red}, using Frink's metrization lemma. We will reproduce the proof for \emph{s.u.e.}~systems.%

If $X$ is a set and $A\subset X\tm X$, we write%
\begin{equation*}
  A \circ A := \left\{(x,y) \in X\tm X\ :\ \exists z \in X \mbox{ with } (x,z) \in A \mbox{ and } (z,y) \in A\right\}.%
\end{equation*}

\begin{lemma_txt}{\bf(Frink's Metrization Lemma):}
Let $X$ be a topological space. If there is a nested sequence $(U_n)_{n\geq0}$ of open symmetric neighborhoods of the diagonal $\Delta \subset X\tm X$ such that $U_0 = X\tm X$, $\bigcap_n U_n = \Delta$ and $U_n \circ U_n \circ U_n \subset U_{n-1}$ for $n\geq1$, then there is a metric $\rho$ for $X$ such that%
\begin{equation*}
  U_n \subset \left\{(x,y)\ :\ \rho(x,y) < 1/2^n\right\} \subset U_{n-1} \mbox{\quad for all\ } n\geq1.%
\end{equation*}
\end{lemma_txt}

If $A$ is a closed subset of a compact metric space, we denote by $N_{\ep}(A)$ the open $\ep$-neighborhood of $A$.%

\begin{theorem}\label{thm_expmetrics}
Let $(X_{\infty},f_{\infty})$ be an equicontinuous \emph{s.u.e.}~NDS such that each $f_n$ is onto. Then for each $n\geq0$ there exists a metric $\rho_n$ on $X_n$ with the following properties:%
\begin{enumerate}
\item[(i)] The maps $f_n:(X_n,\rho_n) \rightarrow (X_{n+1},\rho_{n+1})$ expand small distances uniformly in $n$, i.e., there exist $\alpha>0$ and $\beta>1$ so that  for any $n$ and $x,y\in X_n$, $\rho_n(x,y) < \alpha$ implies $\rho_{n+1}(f_n(x),f_n(y)) \geq \beta\rho_n(x,y)$.%
\item[(ii)] The metrics $\rho_n$ and $d_n$ are uniformly equivalent, i.e., the sequence of maps $\pi_n:(X_n,d_n) \rightarrow (X_n,\rho_n)$, $x\mapsto x$, is uniformly equicontinuous and the same holds for the sequence $(\pi_n^{-1})_{n\geq1}$ of inverses.%
\end{enumerate}
\end{theorem}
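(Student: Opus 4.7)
The strategy is to adapt Reddy's classical argument, which relies on Frink's metrization lemma, to the nonautonomous s.u.e.\ setting. For each initial time $i \geq 0$, I plan to construct a decreasing sequence $(U_n^{(i)})_{n \geq 0}$ of open symmetric neighborhoods of the diagonal in $X_i \times X_i$ satisfying Frink's three-fold triangle hypothesis, then apply Frink's lemma fibrewise to obtain a metric $\rho_i$ with the sandwich $U_n^{(i)} \subset \{\rho_i < 1/2^n\} \subset U_{n-1}^{(i)}$.

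A natural choice is to let $\delta > 0$ be an expansivity constant, fix $c \geq 3$, choose a non-decreasing sequence of integers $N_n \nearrow \infty$ (calibrated against the s.u.e.\ modulus), and set
\[
  U_n^{(i)} := \bigl\{(x,y) \in X_i \times X_i : d_{i+j}\bigl(f_i^j(x),f_i^j(y)\bigr) < \delta/c^n,\ 0 \leq j \leq N_n\bigr\}
\]
for $n \geq 1$, with $U_0^{(i)} := X_i \times X_i$. The Frink condition $U_{n+1}^{(i)} \circ U_{n+1}^{(i)} \circ U_{n+1}^{(i)} \subset U_n^{(i)}$ is immediate from the triangle inequality and $c \geq 3$; that $\bigcap_n U_n^{(i)} = \Delta_{X_i}$ is trivial since $\delta/c^n \to 0$. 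The uniform equivalence required in (ii) then follows cleanly: the Frink inclusion $\{\rho_i < 1/2^n\} \subset U_{n-1}^{(i)}$ gives $d_i(x,y) < \delta/c^{n-1}$ as soon as $\rho_i(x,y) < 1/2^n$, uniformly in $i$; conversely, since $U_n^{(i)}$ only involves the \emph{fixed} finite number $N_n + 1$ of iterates of the $f_j$, uniform equicontinuity of $\{f_j\}$ iterated $N_n$ times yields a modulus $\omega_n$ (independent of $i$) such that $d_i(x,y) < \omega_n$ forces $(x,y) \in U_n^{(i)}$ and hence $\rho_i(x,y) < 1/2^n$.

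The expansion property (i) is the delicate point. The definition of $U_n^{(i)}$ gives a straightforward \emph{forward shift} $(x,y) \in U_{n+1}^{(i)} \Rightarrow (f_i(x), f_i(y)) \in U_n^{(i+1)}$, which by the Frink sandwich alone only yields a Lipschitz-type upper bound for $\rho_{i+1}(f_i x, f_i y)$ in terms of $\rho_i(x, y)$. To obtain the strict expansion $\rho_{i+1}(f_i x, f_i y) \geq \beta \rho_i(x,y)$, one needs a quantitative \emph{inverse} shift: smallness of $\rho_{i+1}(f_i x, f_i y)$ must force $(x,y)$ into a strictly deeper $U_{n+1}^{(i)}$. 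This is where s.u.e.\ enters crucially: the forward-orbit bounds $d_{i+k}(f_i^k x, f_i^k y) < \delta/c^n$ carried by $(f_i(x), f_i(y)) \in U_n^{(i+1)}$ are bootstrapped, via s.u.e.\ applied at times $i, i+1, \ldots, i+N_{n+1}$, to the tightened bounds $d_{i+k}(f_i^k x, f_i^k y) < \delta/c^{n+1}$ demanded by membership in $U_{n+1}^{(i)}$, provided $\rho_i(x,y)$ is a priori small enough to guarantee $d_i(x,y) < \delta$ (which the Frink sandwich already supplies once $n \geq 1$).

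The main obstacle is the mutual calibration of $(N_n)$, the shrinkage ratio $c$, and the s.u.e.\ modulus $N(\cdot)$: the Frink nesting forces $(N_n)$ to be non-decreasing, while the inverse shift requires each $N_n$ to be larger than $N_{n+1}$ by at least $N(\delta/c^{n+1})$ iterations, in order to absorb the $N(\epsilon)$-step lookahead needed by s.u.e.\ at each intermediate time. Resolving this tension — which is possible precisely because s.u.e.\ gives a uniform (in $i$) modulus, allowing the entire construction to be done with sequences $(N_n)$ and $c$ depending only on the system's expansivity data and not on $i$ — is the core technical content of the proof, and produces the desired constants $\alpha > 0$ and $\beta > 1$.
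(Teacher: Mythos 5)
There is a genuine gap, and you have in fact located it yourself without closing it: the calibration of the depths $N_n$ against the s.u.e.\ modulus is not a technical nuisance that ``can be resolved'' --- it is contradictory. Your Frink condition $U_{n+1}^{(i)}\circ U_{n+1}^{(i)}\circ U_{n+1}^{(i)}\subset U_n^{(i)}$ is proved purely by the triangle inequality on the radii $\delta/c^n$, and this forces $N_n\leq N_{n+1}$ (the composed pair is only controlled on the first $N_{n+1}$ iterates, so $U_n^{(i)}$ cannot demand control on more of them). On the other hand, your inverse shift requires deducing, from $d_{i+k}(f_i^k(x),f_i^k(y))<\delta/c^n$ for $1\leq k\leq N_n+1$, the stronger bound $d_{i+k}(f_i^k(x),f_i^k(y))<\delta/c^{n+1}$ for $0\leq k\leq N_{n+1}$. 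The only tool available is s.u.e.\ with $\varepsilon=\delta/c^{n+1}$, which converts $\delta$-closeness over a window of length $N^*:=N(\delta/c^{n+1})$ ahead of time $i+k$ into $\delta/c^{n+1}$-closeness at time $i+k$; since the hypothesis only controls the orbit up to time $N_n+1$, you obtain the refined bound only for $k\leq N_n+1-N^*$, so you need $N_n\geq N_{n+1}+N^*-1$. Together with $N_n\leq N_{n+1}$ this forces $N(\delta/c^{n+1})\leq 1$ for every $n$, which holds only for degenerate systems. So the family $U_n^{(i)}$ as you define it cannot simultaneously satisfy Frink's hypothesis and yield the expansion property; assertion (ii) survives, but (i) does not.

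The paper avoids this by never shrinking the radius: it sets $V_k^{(n)}:=\{(x,y):d_{n,k}(x,y)<\delta\}$ with the \emph{fixed} expansivity constant $\delta$ and takes $U_k^{(n)}:=V_{(k-1)N}^{(n)}$, where $N$ is the single integer furnished by s.u.e.\ for $\varepsilon=\delta/6$. The Frink condition is then \emph{not} a triangle-inequality statement about radii; it is proved by pushing a chain $(x,a),(a,b),(b,y)$ forward with $g_n^j=f_n^j\times f_n^j$, using surjectivity of the $f_n$ to get $g_n(V_k^{(n)})=V_{k-1}^{(n+1)}\cap g_n(V_0^{(n)})$ and the inclusion $V_N\circ V_N\circ V_N\subset V_0$ coming from s.u.e. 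The expansion then falls out of exact depth bookkeeping: if $(x,y)\in U_{k+1}^{(n)}\setminus U_{k+2}^{(n)}$, the orbit separates beyond $\delta$ at some time $j\in(kN,(k+1)N]$, so $(f_n^{3N}(x),f_n^{3N}(y))\notin U_{k-1}^{(n+3N)}$ and $\rho_{n+3N}(f_n^{3N}(x),f_n^{3N}(y))\geq 1/2^k>2\rho_n(x,y)$. Note that this only makes the $3N$-fold compositions expanding; a final averaging step, replacing $\rho_n$ by $\rho_n'(x,y)=\sum_{i=0}^{3N-1}\mu^{-i}\rho_{n+i}(f_n^i(x),f_n^i(y))$ with $\mu=2^{1/(3N)}$, is needed to make each individual $f_n$ expand small distances --- a step absent from your outline, which your (unworkable) direct approach would have had to bypass.
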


\begin{proof}
Let $\delta$ be an expansivity constant. Put $V_0^{(n)} := X_n \tm X_n$ and for each $k\geq0$ let%
\begin{equation*}
  V_k^{(n)} := \left\{(x,y) \in X_n \tm X_n\ :\ d_{n,k}(x,y) < \delta\right\}.%
\end{equation*}
Let $g_n := f_n \tm f_n:X_n \tm X_n \rightarrow X_{n+1} \tm X_{n+1}$. Then, for each $n$, the sequence $(V_k^{(n)})_{k\geq 0}$ is a nested sequence of open symmetric neighborhoods of $\Delta_n = \{(x,x) : x\in X_n\}$ such that $\bigcap_k V^{(n)}_k = \Delta_n$ and it is easy to see that%
\begin{equation}\label{eq_gnrel}
  g_n\left(V_k^{(n)}\right) = V_{k-1}^{(n+1)} \cap g_n\left(V_0^{(n)}\right),%
\end{equation}
where we use that $f_n$ is onto. Taking the product metric $D_n((x,y),(z,w)) = \max\{d_n(x,z),d_n(y,w)\}$ on $X_n\tm X_n$, we find that $N_{\alpha}(\Delta_n) \subset V_0^{(n)}$ for $\alpha = \delta/2$. From the \emph{s.u.e.}~property it follows that there exists an integer $N\geq1$ with%
\begin{equation*}
  V^{(n)}_N \subset N_{(1/3)\alpha}(\Delta_n) \mbox{\quad for all\ } n\geq0.%
\end{equation*}
Then%
\begin{equation*}
  V^{(n)}_N \circ V^{(n)}_N \circ V^{(n)}_N \subset V^{(n)}_0.%
\end{equation*}
Let $U^{(n)}_0 := X_n \tm X_n$ and for $k\geq1$, $U^{(n)}_k := V_{(k-1)N}^{(n)}$. We want to apply Frink's metrization lemma to each sequence $(U^{(n)}_k)_{k\geq0}$. To this end, it suffices to prove%
\begin{equation}\label{eq_frinkcond}
  U^{(n)}_{k+1} \circ U^{(n)}_{k+1} \circ U^{(n)}_{k+1} \subset U^{(n)}_k \mbox{\quad for all\ } k\geq0,\ n\geq0.%
\end{equation}
For $k=0,1$ this relation holds by construction. Let $p = (x,y) \in U^{(n)}_{k+1} \circ U^{(n)}_{k+1} \circ U^{(n)}_{k+1}$ for $k\geq2$. Then there exist points $a,b\in X_n$ such that $\{(x,a),(a,b),(b,y)\} \subset U^{(n)}_{k+1}$. Iterating \eqref{eq_gnrel}, for $0\leq j\leq (k-1)N$ we obtain%
\begin{eqnarray*}
  g^j_n(p) &=& (f_n^j(x),f_n^j(y)) = (f_n^j(x),f_n^j(a)) \circ (f_n^j(a),f_n^j(b)) \circ (f_n^j(b),f_n^j(y))\\
	         &\in& g_n^j(U^{(n)}_{k+1}) \circ g_n^j(U^{(n)}_{k+1}) \circ g_n^j(U^{(n)}_{k+1})\\
					 &\subset& V_{kN - j}^{(n+j)} \circ V_{kN - j}^{(n+j)} \circ V_{kN - j}^{(n+j)}\\
					 &\subset& V_1^{(n+j)} \circ V_1^{(n+j)} \circ V_1^{(n+j)} \subset V_0^{(n+j)}.%
\end{eqnarray*}
Therefore, $d_{n+j}(f_n^j(x),f_n^j(y)) < \delta$ for $j=0,\ldots,(k-1)N$, and hence%
\begin{equation*}
  p = (x,y) \in V^{(n)}_{(k-1)N} = U^{(n)}_k,%
\end{equation*}
concluding the proof of \eqref{eq_frinkcond}. Let $\rho_n$ denote the metric on $X_n$ whose existence is guaranteed by Frink's metrization lemma. Fix $n\geq0$ and let $x,y\in X_n$ with $0 < \rho_n(x,y) < 1/32$. Since the sets $U_k^{(n)}\backslash U_{k+1}^{(n)}$, $k\geq 0$, form a partition of $X_n \tm X_n$, there exists $k\geq-1$ such that $(x,y) \in U^{(n)}_{k+1}\backslash U^{(n)}_{k+2}$. Then%
\begin{equation*}
  1/2^{k+3} \leq \rho_n(x,y) < \min\left\{1/32,1/2^{k+1}\right\},%
\end{equation*}
which implies $k\geq 3$. Since $(x,y) \in U^{(n)}_{k+1} \backslash U^{(n)}_{k+2} = V_{kN}^{(n)} \backslash V_{(k+1)N}^{(n)}$, there exists $j$ with%
\begin{equation*}
  kN < j \leq (k+1)N \mbox{\quad and\quad } d_{n+j}(f_n^j(x),f_n^j(y)) > \delta.%
\end{equation*}
Let $(z,w) = (f^{3N}_n(x),f^{3N}_n(y))$. Then%
\begin{equation*}
  0 \leq (k-3)N < j - 3N \leq (k-2)N%
\end{equation*}
and%
\begin{equation*}
  d_{n+j}\left(f^{j - 3N}_{n+3N}(z),f^{j - 3N}_{n+3N}(w)\right) = d_{n+j}(f_n^j(x),f_n^j(y)) > \delta.%
\end{equation*}
Hence, $(f_n^{3N}(x),f_n^{3N}(y)) \notin V^{(n+3N)}_{(k-2)N} = U^{(n+3N)}_{k-1}$, implying%
\begin{equation*}
  \rho_{n+3N}\left(f^{3N}_n(x),f^{3N}_n(y)\right) \geq 1/2^k > 2\rho_n(x,y).%
\end{equation*}
Thus, $f_n^{3N}$ expands small distances for each $n$. We will construct different metrics $\rho_n'$ such that the maps $f_n$ are uniformly expanding small distances w.r.t.~these metrics. But first we prove that $\rho_n$ and $d_n$ are uniformly equivalent in $n$. By construction, it holds that%
\begin{equation*}
  B_n^{(k-1)N}(x,\delta) \subset B_{1/2^k}(x;\rho_n) \subset B_n^{(k-2)N}(x,\delta)%
\end{equation*}
for each $x\in X_n$ and $k\geq0$, where the Bowen-balls are defined in terms of the given metric $d_n$. By the \emph{s.u.e.}~property, we can choose, for each $\ep>0$, $k = k(\ep)$ large enough so that $B_n^{kN}(x,\delta) \subset B_{\ep}(x;d_n)$ (for all $n$), and hence%
\begin{equation*}
  B_{1/2^k}(x;\rho_n) \subset B_{\ep}(x;d_n).%
\end{equation*}
This shows uniform equicontinuity of the sequence $\pi_n^{-1}:(X_n,\rho_n) \rightarrow (X_n,d_n)$. On the other hand, for any integer $k$, we can choose $\ep>0$ small enough so that $B_{\ep}(x;d_n) \subset B_n^{(k-1)N}(x,\delta)$ for all $n$ and $x\in X_n$, which follows from uniform equicontinuity of $f_{\infty}$. This proves uniform equicontinuity of the sequence $\pi_n:(X_n,d_n) \rightarrow (X_n,\rho_n)$.%

Now we define new metrics by%
\begin{equation*}
  \rho_n'(x,y) := \sum_{i=0}^{3N-1}\frac{1}{\mu^i}\rho_{n+i}(f_n^i(x),f_n^i(y)),%
\end{equation*}
where $\mu := 2^{1/(3N)}$. This clearly defines a metric on $X_n$ compatible with the topology, and $\rho_n'(x,y) < 1/32$ implies $\rho_n(x,y) < 1/32$, and hence%
\begin{eqnarray*}
  \rho_{n+1}'(f_n(x),f_n(y)) &=& \sum_{i=0}^{3N-1}\frac{1}{\mu^i}\rho_{n+1+i}(f_n^{i+1}(x),f_n^{i+1}(y))\\
	                           &>& \mu \sum_{i=1}^{3N-1}\frac{1}{\mu^i}\rho_{n+i}(f_n^i(x),f_n^i(y)) + \frac{2}{\mu^{3N-1}}\rho_n(x,y)\\
														 &=& \mu \sum_{i=0}^{3N-1}\frac{1}{\mu^i}\rho_{n+i}(f_n^i(x),f_n^i(y)) = \mu \rho_n'(x,y).%
\end{eqnarray*}
This gives the desired metrics in which each $f_n$ expands small distances. To complete the proof, it remains to show that $\rho_n$ and $\rho_n'$ are uniformly equivalent. This follows from the inequalities%
\begin{equation*}
  \rho_n(x,y) \leq \rho_n'(x,y) \leq 3N\rho_{n,3N-1}(x,y),%
\end{equation*}
and the uniform equicontinuity of $f_{\infty}$.%
\end{proof}

It can easily be shown that the above theorem does not hold for the uniformly expansive system in Example \ref{ex_sue_unifexp}. We consider this fact as a justification that the notion of strong uniform expansivity is the appropriate analogue of positive expansivity and postpone further investigations to a future work.%

\section*{Acknowledgments}%

The author thanks Jose C\'anovas, Yuri Latushkin and Lai-Sang Young for numerous fruitful discussions on nonautonomous dynamical systems and Anne Gr\"{u}nzig for proof-reading the manuscript. This work was supported by DFG fellowship KA 3893/1-1.%


\begin{thebibliography}{999}
\bibitem[1]{Aea} R.~L.~Adler, A.~G.~Konheim, M.~H.~McAndrew. \emph{Topological entropy}. Trans. Am. Math. Soc. 114 (1965), 309--319.%
\bibitem[2]{BVa} L.~Barreira, C.~Valls. \emph{Stability of nonautonomous differential equations}. Lecture Notes in Mathematics 1926, Springer, 2008.%
\bibitem[3]{Can} J.~S.~C\'anovas. \emph{On entropy of non–autonomous discrete systems}. Progress and Challenges in Dynamical Systems. Springer, 143--159, 2013.%
\bibitem[4]{Hea} X.~Huang, X.~Wen, F.~Zeng. \emph{Topological pressure of nonautonomous dynamical systems}. Nonlinear Dyn. Syst. Theory 8 (2008), no. 1, 43--48.%
\bibitem[5]{KHa} A.~Katok, B.~Hasselblatt. \emph{Introduction to the modern theory of dynamical systems}. Cambridge University Press (1995).%
\bibitem[6]{Kaw} C.~Kawan. \emph{Metric entropy of nonautonomous dynamical systems}. Nonauton. Stoch. Dyn. Syst. 1 (2013), 26--52.%
\bibitem[7]{KRa} P.~E.~Kloeden, M.~Rasmussen. \emph{Nonautonomous dynamical systems}. Mathematical Surveys and Monographs 176. AMS, Providence, 2011.%
\bibitem[8]{KSn} S.~Kolyada, L.~Snoha. \emph{Topological entropy of nonautonomous dynamical systems}. Random Comput. Dynamics 4 (1996), no.~2--3, 205--233.%
\bibitem[9]{KMS} S.~Kolyada, M.~Misiurewicz, L.~Snoha. \emph{Topological entropy of nonautonomous piecewise monotone dynamical systems on the interval}. Fund. Math. 160  (1999), no. 2, 161--181.%
\bibitem[10]{Lan} S.~Lang. \emph{Real analysis}. 2nd edition. Addison-Wesley, 1983.%
\bibitem[11]{LYo} A.~Lasota, J.~A.~Yorke. \emph{When the long-time behavior is independent of the initial density}. SIAM J. Math. Anal. 27 (1996), no. 1, 221--240.%
\bibitem[12]{OSY} W.~Ott, M.~Stendlund, L.--S.~Young. \emph{Memory loss for time-dependent dynamical systems}. Math. Res. Lett. 16 (2009), no. 3, 463--475.%
\bibitem[13]{Red} W.~L.~Reddy. \emph{Expanding maps on compact metric spaces}. Topology Appl. 13 (1982), no. 3, 327--334.%
\bibitem[14]{RWi} D.~Richeson, J.~Wiseman. \emph{Positively expansive homeomorphisms of compact spaces}. Int. J. Math. Math. Sci. (2004), no. 53--56, 2907--2910.%
\bibitem[15]{Ro1} M.~Roy. \emph{Fibrewise expansive systems}. Topology Appl. 124 (2002), no. 3, 373--396.%
\bibitem[16]{Ro2} M.~Roy. \emph{Is the composite of two expansive maps expansive?}. Topology Appl. 139 (2004), no. 1--3, 17--22.%
\bibitem[17]{Ru1} D.~Ruelle. \emph{Statistical mechanics on a compact set with $\Z^{\nu}$ action satisfying expansiveness and specification}. Trans. Am. Math. Soc. 185 (1973), 237--251.%
\bibitem[18]{Ru2} D.~Ruelle. \emph{Differentiation of SRB states}. Commun. Math. Phys. 187 (1997), 227--241.%
\bibitem[19]{Shu} M.~Shub. \emph{Endomorphisms of compact differentiable manifolds}. Amer. J. Math. 91 (1969), 175--199.%
\bibitem[20]{TDa} D.~Thakkar, R.~Das. \emph{Topological stability of a sequence of maps on a compact metric space}. Bull. Math. Sci. 4 (2014), no. 1, 99--111.%
\bibitem[21]{Wal} P.~Walters. \emph{A variational principle for the pressure of continuous transformations}. Amer. J. Math. 97 (1975), no. 4, 937--971.%
\bibitem[22]{Wa2} P.~Walters. \emph{Invariant measures and equilibrium states for some mappings which expand distances}. Trans. Amer. Math. Soc. 236 (1978), 121--153.%
\end{thebibliography}
\end{document}